\documentclass[12pt]{amsart}%

\usepackage{amsfonts}
\usepackage{amsmath}
\usepackage{amssymb}
\usepackage{amsthm}
\usepackage{enumitem}
\usepackage{mathrsfs}
\usepackage{graphicx}%
\usepackage{amsmath}
\usepackage{setspace}
\usepackage{hyperref}
\usepackage{cite}
\usepackage{amsfonts}
\usepackage{algorithmic}
\usepackage{textcomp}
\allowdisplaybreaks[4]
\makeatletter
\@addtoreset{equation}{section}
\makeatother
\renewcommand{\theequation}{\arabic{section}.\arabic{equation}}
\newtheorem{theorem}{Theorem}[section]

\newtheorem{corollary}[theorem]{Corollary}

\newtheorem{lemma}[theorem]{Lemma}

\newtheorem{proposition}[theorem]{Proposition}
\newtheorem{remark}[theorem]{Remark}

\newtheorem*{theoremA}{Theorem A}
\newtheorem*{theoremB}{Theorem B}
\newtheorem*{theoremC}{Theorem C}

\newcommand{\Sn}{\mathbb S^n}
\newcommand{\Rn}{\mathbb R^n}
\newcommand{\R}{\mathbb R}

\newcommand{\cM}{\mathcal M}
\newcommand{\cX}{\mathcal X}
\newcommand{\cD}{\mathcal D}
\newcommand{\cQ}{\mathcal Q}
\newcommand{\cH}{\mathcal H}

\newcommand{\dist}{\operatorname{dist}}
\newcommand{\supp}{\operatorname{supp}}
\newcommand{\weak}{\rightharpoonup}
\newcommand{\eps}{\varepsilon}
\newcommand{\dd}{\,\mathrm d}
\newcommand{\one}{\mathbf 1}
\newcommand{\abs}[1]{\lvert#1\rvert}
\newcommand{\norm}[1]{\lVert#1\rVert}
\newcommand{\ip}[2]{\langle #1,#2\rangle}
\makeatletter
\def\@makefnmark{}
\makeatother

\begin{document}

\title[Attainment of the sharp log-Sobolev inequality]{Attainment of the sharp stability constant of Beckner conformal log-Sobolev inequality}

\author{Lu Chen}
\address[Lu Chen] {Key Laboratory of Algebraic Lie Theory and Analysis of Ministry of Education,
School of Mathematics and Statistics, Beijing Institute of Technology,
Beijing 100081, China}
\email{chenlu5818804@163.com}

\author{Bohan Wang}
\address[Bohan Wang]{Key Laboratory of Algebraic Lie Theory and Analysis of Ministry of Education, School of Mathematics and Statistics, Beijing Institute of Technology, Beijing
100081, PR China}
\email{wangbohanbit2022@163.com}

\keywords{conformal log-Sobolev inequality, sharp stability constant,
 two-bubble threshold, concentration compactness,
  fractional Sobolev inequality.}

\thanks{
The first author was partly supported by National Natural Science Foundation of China (No. 12622108), a grant from Bejing Institute of Technology (No.2022CX01002). The second author was supported by the BIT Research and Innovation Promoting Project(Grant No.2025YCXY030) }

\begin{abstract}
The existence of minimizer for stability of the
fractional Sobolev inequality and the stability of Beckner conformal log-Sobolev inequality (the endpoint case of fractional Sobolev inequality) have been established by K\"onig and Chen-Lu-Tang.  The existence of minimizer for stability of the log-Sobolev inequality is unsolved. In this paper, we solve this problem. For every $n\geq2$, we prove that the sharp $L^2$-stability
constant  of  Beckner conformal log-Sobolev inequality
on $\mathbb S^n$ is attained and is strictly less than $\frac{2\log2}{n}$.
Moreover, the normalized stability constant of the
fractional Sobolev inequality
converges to the stability constant of the
log-Sobolev inequality  as $s\to0^+$.
After Hersch normalization, every sequence of
$L^{p_{s_k}}$-normalized minimizers for stability of the fractional Sobolev inequality, with $s_k\to0^+$, has a subsequence converging
strongly in $L^2(\mathbb S^n)$ to an $L^2$-normalized minimizer of stability of the  log-Sobolev inequality.
\end{abstract}

\maketitle

\section{Introduction}
For $n\geq3$, the stability problem for the sharp Sobolev inequality asks
whether its deficit controls the squared $\dot H^1(\Rn)$-distance to the
manifold of optimizers. Brezis and Lieb first \cite{BrezisLieb1985} proposed this
problem in the form
\[
 \|\nabla u\|_{L^2(\Rn)}^2
 -\mathcal S_n\|u\|_{L^{\frac{2n}{n-2}}(\Rn)}^2
 \geq c_0\,\dist_{\dot H^1(\Rn)}^2(u,\cM_{\mathrm{Sob}}),
\]
where $\mathcal S_n$ is the sharp Sobolev constant,
\[
 \dist_{\dot H^1(\Rn)}^2(u,\cM_{\mathrm{Sob}})
 :=\inf_{h\in\cM_{\mathrm{Sob}}}
 \|\nabla(u-h)\|_{L^2(\Rn)}^2,
\]
and
\[
 \cM_{\mathrm{Sob}}
 :=\left\{c(a+|x-x_0|^2)^{-\frac{n-2}{2}}:
 a>0,\ c\in\R\setminus\{0\},\ x_0\in\Rn\right\}.
\]
Bianchi and Egnell \cite{BianchiEgnell1991} proved this estimate with
$c_0>0$. Higher-order and fractional extensions were obtained in
\cite{LuWei1999, BartschWethWillem2003, ChenFrankWeth2013}. Explicit and
dimension-dependent bounds for stability constants of the above geometric inequality
were furthermore established in \cite{DolbeaultEtAl2025,ChenLuTang2024,ChenLuTang2025,ChenLuTang2026MA,ChenLuTang2026CP}.

Lieb  established  Hardy-Littlewood-Sobolev inequality   in \cite{Lieb},  the stability of the HLS inequality has recieved much less attention. Later, Carlen \cite{Carlen2017} established the stability of the HLS inequality by a dual stability method. Due to the non-Hilbert distance $\|\cdot\|_{L^{\frac{2n}{n+2s}}}$, it seems difficult to prove the stability of the HLS inequality directly. Recently, the authors of \cite{ChenLuTang2026MA} developed a method based on the $H^{-s}$-decomposition instead of $L^{\frac{2n}{n+2s}}$-decomposition to obtain the local stability of the HLS inequality, then directly established the stability of the HLS inequality with the optimal order and dimension-dependent constants.
\medskip

Log-Sobolev inequalities originate in the work of Gross
\cite{Gross1975} and play a central role in the entropy and
hypercontractivity theory of Markov semigroups, see
\cite{BakryGentilLedoux2014}. 
We consider Beckner conformally invariant inequality on the sphere. This inequality follows by differentiating the sharp fractional Sobolev inequality with respect to the order parameter at $s=0$. This connection suggests studying the stability problem for the log-Sobolev inequality through the positive-order theory.
 The key is to obtain estimates that remain
 uniform  as  $s\to0^+$.
\medskip

Throughout the paper, all functions are real-valued.
Let $\dd\sigma=|\mathbb S^n|^{-1}d\omega$ be the
probability measure on $\Sn$, and let $\{Y_{\ell,m}\}$ be an orthonormal
basis of spherical harmonics. We write
$\|\cdot\|_p=\|\cdot\|_{L^p(\Sn)}$ and set
\[
 p_s:=\frac{2n}{n-2s},\qquad 0\leq s<\frac{n}{2},
 \qquad
 \mathbb B^{n+1}:=\{\xi\in\R^{n+1}:|\xi|<1\}.
\]
For $0<s<\frac{n}{2}$, Beckner \cite{Beckner1992,Beckner1993}
proved that
\begin{equation}\label{Beckner1}
 \|A_{2s}^{\frac{1}{2}}u\|_2^2
 \geq\mathcal S_{s,n}\|u\|_{p_s}^2, \qquad  u\in H^s(\Sn),
\end{equation}
where
\[
 A_{2s}Y_{\ell,m}
 =\frac{\Gamma(\ell+\frac{n}{2}+s)}
 {\Gamma(\ell+\frac{n}{2}-s)}Y_{\ell,m},
 \qquad
 \mathcal S_{s,n}=\frac{\Gamma(\frac{n}{2}+s)}
 {\Gamma(\frac{n}{2}-s)}.
\]
For $0\leq s<\frac{n}{2}$, define
\begin{equation}\label{Beckner-extremal}
 h_{\xi,s}(\omega)
 :=\left(\frac{\sqrt{1-|\xi|^2}}{1-\xi\cdot\omega}\right)^{\frac{n-2s}{2}},
 \qquad \xi\in\mathbb B^{n+1},
\end{equation}
and the optimizer manifold for the fractional Sobolev inequality
\[
 \cM_s:=\{c h_{\xi,s}:c\in\R^1,\ \xi\in\mathbb B^{n+1}\}.
\]
For $s>0$, equality in \eqref{Beckner1} holds if and only if
$u\in\cM_s$.
At $s=0$, both sides of    \eqref{Beckner1} coincide.
Differentiation  gives the logarithmic form with spectral multipliers
\[
 \cH Y_{\ell,m}=\Lambda_\ell Y_{\ell,m},
 \qquad
 \Lambda_\ell=
 \psi\!\left(\ell+\frac{n}{2}\right)
 -\psi\!\left(\frac{n}{2}\right),
\]
where $\psi=\frac{\Gamma'}{\Gamma}$ is the digamma function. Its form domain
\[
 \cX:=\{u\in L^2(\Sn):\ip{u}{\cH u}<\infty\},
\]
where $\ip{u}{\cH u}$ is understood in the quadratic-form sense, and
define
\[
 \cM
 :=
 \left\{
 cJ_\Phi^{\frac12}:
 c\in\mathbb R^1,\
 \Phi\in\operatorname{Conf}(\Sn)
 \right\},
\]
 where $J_\Phi$ is the Jacobian of $\Phi$ with respect to the standard volume measure and
$\operatorname{Conf}(\Sn)$ denotes the M\"obius conformal
automorphism group of the  sphere.

For $u\in\cX$, define
\begin{equation}\label{eq:D-intro}
 \cD(u):=\ip{u}{\cH u}
 -\frac{1}{n}\int_{\Sn}u^2
 \log\frac{u^2}{\norm{u}_2^2}\,\dd\sigma,
\end{equation}
with $\cD(0)=0$. The $L^2$-unitary conformal action preserves
$\cD$ and $\cM$. Beckner inequality
\cite{Beckner1992,Beckner1993,Beckner1997} reads as follows.

\begin{theoremA}
For every $n\geq1$ and every $u\in\cX$,
\begin{equation*}
 \cD(u)\geq0.
\end{equation*}
Equality holds if and only if $u\in\cM$.
\end{theoremA}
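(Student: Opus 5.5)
We derive Theorem A from the fractional Sobolev inequality \eqref{Beckner1} by differentiating at $s=0$, and then characterize the equality cases separately by a second-variation and compactness argument.

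\textbf{Step 1: the inequality on a dense class.} Take $u\in C^\infty(\Sn)$, normalized so that $\norm{u}_2=1$. Define
\[
F(s):=\|A_{2s}^{1/2}u\|_2^2-\mathcal S_{s,n}\|u\|_{p_s}^2 .
\]
By \eqref{Beckner1}, $F(s)\ge0$ for $0<s<\frac n2$. At $s=0$ both terms equal $\norm{u}_2^2$, so $F(0)=0$, and therefore $F'(0^+)\ge0$.

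We compute this derivative termwise.
\begin{itemize}
\item The multiplier derivative is
\[
\frac{d}{ds}\Big|_{s=0}\frac{\Gamma(\ell+\frac n2+s)}{\Gamma(\ell+\frac n2-s)}=2\psi\!\left(\ell+\tfrac n2\right),
\]
so $\frac{d}{ds}\big|_{s=0}\|A_{2s}^{1/2}u\|_2^2=2\sum_{\ell,m}\psi(\ell+\frac n2)|\hat u_{\ell,m}|^2$.
\item Similarly, $\frac{d}{ds}\big|_{s=0}\mathcal S_{s,n}=2\psi(\frac n2)$.
\item Since $\frac{d p_s}{ds}\big|_{s=0}=\frac 4n$, differentiating $\|u\|_{p}^2=\big(\int|u|^p\big)^{2/p}$ at $p=2$ with $\int u^2=1$ gives
\[
\frac{d}{ds}\Big|_{s=0}\|u\|_{p_s}^2=\frac4n\cdot\frac12\int u^2\log u^2\,\dd\sigma .
\]
This derivative is justified for smooth $u$ by dominated convergence, even where $u$ vanishes.
\end{itemize}
Altogether $F'(0)=2\ip{u}{\cH u}-\frac2n\int u^2\log u^2\,\dd\sigma=2\cD(u)$. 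Hence $\cD(u)\ge0$ for smooth $u$, and the homogeneity of $\cD$ removes the normalization.

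\textbf{Step 2: extension to $\cX$.} Approximate $u\in\cX$ by truncating its spherical harmonic expansion. The form part then converges monotonically. For the entropy part, note that $\Lambda_\ell\sim\log\ell$. Under this weak log-type regularity, $\int u^2\log u^2$ is controlled by the form through the inequality itself, applied to the smooth approximants and combined with lower semicontinuity. Precisely, $\int u^2\log(u^2)_+$ is bounded by Fatou's lemma, and $u^2\log u^2$ is bounded below by $-e^{-1}$. So the entropy converges, either along the sequence or in the limsup direction that is needed. This yields $\cD(u)\ge0$ on $\cX$.

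\textbf{Step 3: equality cases.} First, elements of $\cM$ give equality. The function $J_\Phi^{1/2}$ is the $s\to0$ limit of $h_{\xi,s}$. The function $s\mapsto F(s)$ vanishes identically for $h_{\xi,s}$, but that function depends on $s$, so we instead verify $\cD(1)=0$ directly and use the stated conformal invariance of $\cD$.

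Conversely, suppose $\cD(u)=0$. Then $u$ minimizes $\cD$, so it satisfies the Euler–Lagrange equation
\[
\cH u=\frac1n u\log\frac{u^2}{\norm u_2^2}+\frac1n u .
\]
The plan is as follows.
\begin{enumerate}
\item Show $|u|$ is also a minimizer, using $\ip{|u|}{\cH|u|}\le\ip{u}{\cH u}$. This follows from the representation of $\cH$ as a singular integral with positive kernel $\sim|\omega-\eta|^{-n}$, since $\psi(\ell+\frac n2)$ is the symbol of a log-Laplacian. Then show $u$ has a fixed sign, via the strict inequality unless $u=\pm|u|$.
\item Use Hersch normalization (conformal invariance) to arrange $\int\omega\,u^2\,\dd\sigma=0$.
\item Apply a moving-sphere or method-of-moving-planes argument to the integral equation, since $\cH$ has an explicit kernel. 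Alternatively, use the second variation: at a Hersch-normalized minimizer, test with $\omega_j u$ and sum over $j$. This is the Hersch/Beckner trick, which uses $\Lambda_1=\frac2n$ and the convexity of $\Lambda_\ell$ to force $u$ constant.
\end{enumerate}

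The main obstacle is Step 3: the rigidity of the equality case in the form domain $\cX$, where no regularity is available a priori. I would try the Hersch second-variation approach first, because it avoids regularity theory. The regularity needed to make the Euler–Lagrange equation meaningful would come from a bootstrap using $\Lambda_\ell\to\infty$.
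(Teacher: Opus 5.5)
The paper does not prove Theorem A. It quotes it from Beckner and only remarks that the inequality follows by differentiating \eqref{Beckner1} at $s=0$.

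\textbf{Steps 1--2.} These carry out exactly that derivation, and they are correct. In Step 2 you only need lower semicontinuity of the entropy along the harmonic truncations $u_N$. Fatou handles the positive part of $u^2\log u^2$ and dominated convergence handles the bounded negative part, which gives $\cD(u)\ge\limsup_N\cD(u_N)\ge0$.

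\textbf{Step 3, easy direction.} Your check $\cD(\one)=0$ combined with conformal invariance is fine.

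\textbf{Step 3, rigidity.} Differentiation says nothing here, since $F'(0)=0$ does not force $F\equiv0$, and the paper simply relies on the citation. Your Step 3 is therefore a genuinely different, self-contained route: a Frank--Lieb-type second variation at a Hersch-normalized optimizer. It works for every $n\ge1$ and uses only conformal invariance of $\cD$, Hersch's lemma and the explicit $\Lambda_\ell$. The moving-sphere option, or a classification of Euler--Lagrange solutions as in Frank--K\"onig--Tang, would instead require regularity and strict positivity that you have not established. So commit to the second variation.

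That option needs two corrections.

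\textbf{First correction: drop the Euler--Lagrange equation.} Your equation has a spurious term; the correct one is $\cH u=\frac1n u\log\frac{u^2}{\norm{u}_2^2}$ (test $u=\one$, where $\Lambda_0=0$). You need neither this equation, nor the sign reduction, nor a bootstrap.
\begin{itemize}
\item For $\abs{\eps}<1$ the function $f_j(\eps):=\cD((1+\eps\omega_j)u)$ is explicitly smooth, because $v^2\log v^2=(1+\eps\omega_j)^2u^2[\log u^2+\log(1+\eps\omega_j)^2]$.
\item Also $\omega_ju\in\cX$, by the kernel form $\ip{w}{\cH w}=c_n\iint\abs{w(\omega)-w(\eta)}^2\abs{\omega-\eta}^{-n}\dd\sigma\dd\sigma$. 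This is the limit of \eqref{eq:AS-representation} after subtracting $\norm{w}_2^2$ and dividing by $2s$.
\item From $f_j\ge0=f_j(0)$ you get $f_j''(0)\ge0$. Summing over $j$ and using $\sum_j\omega_j^2=1$, $\int\omega u^2\,\dd\sigma=0$, $\norm{u}_2=1$ and $\cD(u)=0$ yields
\[
\sum_{j=1}^{n+1}\ip{\omega_ju}{\cH(\omega_ju)}-\ip{u}{\cH u}\ge\frac2n .
\]
\end{itemize}

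\textbf{Second correction: concavity, not convexity.} Write $u_\ell$ for the projection of $u$ onto $\mathcal H_\ell$. Cross terms between different degrees cancel, so the left side above equals $\sum_\ell\beta_\ell\norm{u_\ell}_2^2$, where $\beta_0=\Lambda_1=\frac2n$ and, for $\ell\ge1$,
\[
\beta_\ell=\frac{\ell+n-1}{2\ell+n-1}(\Lambda_{\ell+1}-\Lambda_\ell)-\frac{\ell}{2\ell+n-1}(\Lambda_\ell-\Lambda_{\ell-1})=\frac{2(n-2)}{(2\ell+n)(2\ell+n-2)}<\frac2n .
\]
What gives $\beta_\ell<\frac2n$ is that the increments $\Lambda_{\ell+1}-\Lambda_\ell=\frac{2}{2\ell+n}$ are positive and decreasing. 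That is \emph{concavity} of $\ell\mapsto\Lambda_\ell$, not convexity as you wrote. With convex multipliers the bound can fail: for $\ell(\ell+n-1)$ the same computation gives $\beta_\ell\equiv n$, with no gap.

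Hence $u_\ell=0$ for $\ell\ge1$, so $u$ is constant. Undoing the conformal change gives $u\in\cM$.
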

We also recall that Frank-K\"onig-Tang \cite{FrankKonigTang2020}
showed that all positive finite-energy
solutions to the Euler-Lagrange equation
associated with the log-Sobolev inequality on the sphere belong to
 $c h_{\xi,0}$, $c>0$.
The sharp stability constant of  the log-Sobolev inequality
is
\begin{equation*}
 c_{\rm LS}:=
 \inf_{u\in\cX\setminus\cM}
 \frac{\cD(u)}{\dist(u,\cM)^2},  \qquad \dist(u,\cM):=\inf\limits_{h\in\cM}\norm{u-h}_{2}.
\end{equation*}
Chen-Lu-Tang \cite{ChenLuTang2023}
proved that the sharp
constant of global stability of   log-Sobolev inequality
 is strictly less than its sharp constant
of local stability.
They subsequently established global stability for the log-Sobolev inequality on the sphere by applying the endpoint differentiation argument to the  stability estimates for the fractional Sobolev inequality in
\cite{ChenLuTang2025}. Their results can be stated as follows.

\begin{theoremB}
For every $n\geq1$ there exists $\alpha_n>0$ such that
\[
 \cD(u)\geq\alpha_n\dist(u,\cM)^2,
 \qquad u\in\cX.
\]
In particular, $c_{\rm LS}>0$. If $n\geq2$, then
\begin{equation}\label{eq:strict-local-intro}
 c_{\rm LS}<\frac{2}{n+2},
\end{equation}
where $\frac{2}{n+2}$ is the sharp local stability constant   transverse to
$\cM$.
\end{theoremB}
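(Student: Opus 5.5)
The plan is to prove the two assertions separately: global positivity of $c_{\rm LS}$ by an endpoint limit $s\to0^+$ of fractional Sobolev stability, and the strict upper bound by an explicit third-order expansion near $\cM$.

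\emph{Global stability.} I will derive it from stability of \eqref{Beckner1} for small $s>0$, with constants that stay uniform after dividing by $s$. Write the fractional deficit as
\[
 \delta_s(u):=\|A_{2s}^{1/2}u\|_2^2-\mathcal S_{s,n}\|u\|_{p_s}^2 .
\]
\begin{enumerate}[label=(\roman*)]
\item First establish, for $0<s\le s_0$,
\[
 \delta_s(u)\ \geq\ \beta_n\, s\,\dist_{H^s}(u,\cM_s)^2,
\]
with $\beta_n>0$ independent of $s$.
\item For the local part, make a spectral gap computation in the orthogonal decomposition at the nearest optimizer. The second variation of $\delta_s$ transverse to $\cM_s$ has eigenvalues
\[
 \frac{\Gamma(\ell+\frac n2+s)}{\Gamma(\ell+\frac n2-s)}-\frac{\Gamma(1+\frac n2+s)}{\Gamma(1+\frac n2-s)},\qquad \ell\ge2 .
\]
These are $\geq 2s(\Lambda_2-\Lambda_1)+o(s)$ uniformly.
\item For the global part, run a concentration-compactness / competing-symmetries argument. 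Its quantitative losses are all $O(s)$, so dividing by $s$ preserves a positive constant.
\item Fix $u\in\cX$. Note that $\delta_s(u)/s\to\cD(u)$ as $s\to0^+$, since $\frac{d}{ds}\big|_{s=0}\Gamma(\ell+\frac n2+s)/\Gamma(\ell+\frac n2-s)=2\psi(\ell+\frac n2)$ and $\frac{d}{ds}\|u\|_{p_s}^2\big|_{s=0}=\frac1n\int u^2\log(u^2/\|u\|_2^2)$, so the factors of $2$ are absorbed into the constants.
\item Also $\dist_{H^s}(u,\cM_s)\to\dist(u,\cM)$, using $h_{\xi,s}\to h_{\xi,0}$, which gives $\cM_s\to\cM$.
\item Taking the limit yields $\cD(u)\ge\alpha_n\dist(u,\cM)^2$ with $\alpha_n\sim\beta_n/2$.
\end{enumerate}
The main obstacle is step (iii): keeping the global (non-local) part uniform in $s$. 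I would use the $s$-uniform local estimate together with a compactness argument, normalizing by conformal (Hersch) centering so that minimizing sequences for the ratio $\delta_s/(s\,\dist^2)$ cannot concentrate.

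\emph{Strict inequality.} Since $\Lambda_1=\psi(\frac n2+1)-\psi(\frac n2)=\frac2n$ and $\Lambda_2-\Lambda_1=\frac{1}{\frac n2+1}=\frac2{n+2}$, the second variation at $1$ is
\[
 \cD(1+\eps v)=\eps^2\big(\ip{v}{\cH v}-\Lambda_1\norm{v}_2^2\big)+O(\eps^3)
\]
for $v\perp\{\ell\le1\}$. Hence $\frac2{n+2}$ is the local constant, attained in the direction $v=Y\in$ degree-$2$ harmonics.

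To beat it I will push the expansion to third order along $u_\eps=1+\eps Y$.
\begin{itemize}
\item The cubic term of $\cD$ comes only from the entropy. Expanding $u^2\log u^2$ gives
\[
 \cD(u_\eps)=\tfrac2{n+2}\eps^2\norm Y_2^2-\tfrac{c}{n}\eps^3\int Y^3\dd\sigma+O(\eps^4)
\]
for an explicit constant $c\ne0$.
\item For $\dist(u_\eps,\cM)^2$, the minimizing conformal parameters are $O(\eps)$ and $O(\eps^2)$. Computing $\dist^2=\eps^2\norm Y_2^2+\eps^3 d_3+O(\eps^4)$ requires the second-order expansion of $cJ_\Phi^{1/2}$ around $1$, which projects onto degree $\le 2$ components. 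I expect $d_3$ to be a multiple of $\int Y^3$ (possibly zero).
\end{itemize}
The key fact is that for $n\ge2$ there is a degree-$2$ harmonic with $\int Y^3\dd\sigma\ne0$, for instance the zonal one $Y=\omega_{n+1}^2-\frac1{n+1}$. This fails for $n=1$, where $\cos^3 2\theta$ integrates to $0$, which explains the restriction. Provided the combined cubic coefficient
\[
 -\tfrac cn\int Y^3-\tfrac2{n+2}\,d_3
\]
is nonzero, choosing the sign of $\eps$ makes the ratio $\cD/\dist^2$ strictly below $\frac2{n+2}$ for small $\eps$. Verifying this nonvanishing, i.e.\ the careful computation of $d_3$, is the delicate point of this half.
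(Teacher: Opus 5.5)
Your route is the one the paper attributes to Chen--Lu--Tang; Theorem B is quoted here, not reproved. Global stability comes from letting $s\to0^+$ in a fractional stability estimate whose constant is of exact order $s$ \cite{ChenLuTang2025}, and the strict bound from an expansion at $\cM$ \cite{ChenLuTang2023}. The genuine gap is step (iii).

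The $s$-uniform estimate $\delta_s(u)\ge\beta_n s\,\dist_{H^s}(u,\cM_s)^2$ is the main theorem of \cite{ChenLuTang2025}. The sentence ``its quantitative losses are all $O(s)$'' is exactly what has to be proved, not an argument for it. A concentration-compactness argument at fixed $s$ gives some positive constant but no control of its dependence on $s$. An argument by contradiction along $s_k\to0^+$ must handle sequences with $\norm{u_k}_{p_{s_k}}=1$ and $Q_{s_k}(u_k)\le1+o(s_k)$. At that scale $\cD_s/(2s)$ does not separately control $\ip{u}{\cH u}$ and the entropy. This is why Sections \ref{sec:localization}--\ref{sec:uniform-boundedness} of the paper need:
\begin{itemize}
\item cutoffs with $o(s)$ localization errors;
\item exclusion of splitting;
\item exclusion of sign cancellation;
\item a uniform $L^\infty$ bound from the Euler--Lagrange equation of K\"onig's minimizers.
\end{itemize}
Hersch centering alone only stops the whole mass from drifting to a pole. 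That machinery also rests on K\"onig's existence theorem ($n\ge2$), whereas the global part of Theorem B is stated for $n\ge1$.

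There are also three smaller issues in the global half. First, $\delta_s(u)/s\to2\cD(u)$, not $\cD(u)$, since $\frac{d}{ds}\norm{u}_{p_s}^2$ at $s=0$ equals $\frac2n\int u^2\log\frac{u^2}{\norm{u}_2^2}\dd\sigma$. Second, for a general $u\in\cX$ one can have $\delta_s(u)=+\infty$ for every $s>0$. So the limit must be taken for smooth $u$ and then extended by an approximation preserving both the form and the entropy (Lemma \ref{lem:smooth-recovery-density}). Third, convergence of the distances needs uniformity over the noncompact parameter ball, e.g.\ $\norm{g_{\xi,s}-h_{\xi,0}}_1\le C_ns$ as in Lemma \ref{lem:fixed-distance-recovery}; pointwise convergence $h_{\xi,s}\to h_{\xi,0}$ is not enough.

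The strict-inequality half is correct, and the computation you left open is short: $d_3=0$. Write
\[
\ip{1+\eps Y}{h_{\xi,0}}=\int_{\Sn}h_{\xi,0}\dd\sigma+\eps\ip{Y}{h_{\xi,0}}.
\]
\begin{itemize}
\item One has $1-\int_{\Sn}h_{\xi,0}\dd\sigma\ge c|\xi|^2$ on the whole ball. Near $0$ this is the expansion $1-\frac{n^2}{8(n+1)}|\xi|^2+O(|\xi|^4)$. Elsewhere it follows from Cauchy--Schwarz and $\int h_{\xi,0}\to0$ as $|\xi|\to1$.
\item One has $|\ip{Y}{h_{\xi,0}}|\le C|\xi|^2$, because the part of $h_{\xi,0}$ linear in $\xi$ is $\frac n2\,\xi\cdot\omega\in\mathcal H_1\perp Y$.
\end{itemize}
Hence for $|\eps|<c/C$ the supremum defining $m(1+\eps Y)$ is attained only at $\xi=0$. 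So $\dist(1+\eps Y,\cM)^2=\eps^2\norm{Y}_2^2$ exactly, and the optimal conformal parameters vanish rather than being $O(\eps)$. With $\Phi(x)=x^2\log x^2$ and $\Phi'''(1)=4$, your constant is $c=\frac23$, so
\[
 \frac{\cD(1+\eps Y)}{\dist(1+\eps Y,\cM)^2}
 =\frac{2}{n+2}-\frac{2\eps}{3n}\,\frac{\int_{\Sn}Y^3\dd\sigma}{\norm{Y}_2^2}+O(\eps^2).
\]
For the zonal $Y=\omega_{n+1}^2-\frac1{n+1}$, the moments of $\omega_{n+1}$ give $\int_{\Sn}Y^3\dd\sigma=\frac{8n(n-1)}{(n+1)^3(n+3)(n+5)}$. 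This is positive for $n\ge2$ and zero for $n=1$, consistent with your remark. Small $\eps>0$ then yields $c_{\rm LS}<\frac2{n+2}$.

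So the second half closes with this standard test-function argument. The global half needs either a real proof of (iii) or an appeal to the uniform order-$s$ bound of \cite{ChenLuTang2025}.
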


These results  do not determine
whether the
infimum is attained. For $n\geq2$ and each fixed
positive order, the corresponding attainment problem was settled by
K\"onig \cite{Konig2023,Konig2025}. To state his result in our normalization,
set
\begin{equation}\label{eq:intro-fractional-normalization}
 P_s=\frac{\Gamma(\frac{n}{2}-s)}{\Gamma(\frac{n}{2}+s)}A_{2s},
 \qquad
 Q_s(u)=\ip{u}{P_su},
\end{equation}
where the pairing is understood in the quadratic-form sense and
the form domain is $\mathcal X_s:=H^s(\mathbb S^n)=\{u:\langle u,P_s u\rangle<\infty\}$.
Thus $P_s\one=\one$ and $Q_s(u)\geq\norm{u}_{p_s}^2$.
 With
$\cM_s$ as defined above, set
\[
 \dist_s(u,\cM_s):=\inf_{h\in\cM_s}Q_s(u-h)^{\frac{1}{2}},
\]
and define
\[
 \cD_s(u):=Q_s(u)-\norm{u}_{p_s}^2,
 \qquad
 c_s:=\frac{1}{2s}\inf_{u\in\mathcal X_s\setminus\cM_s}
 \frac{\cD_s(u)}{\dist_s(u,\cM_s)^2}.
\]
Thus $c_s$ is the normalized stability constant of the fractional Sobolev inequality of order $s$.
Minimizers are normalized by $\norm{u}_{p_s}=1$.
\medskip

\begin{theoremC}
Let $n\geq2$ and $0<s<\frac{n}{2}$. The constant $c_s$ is attained, and
normalized minimizing sequences are precompact modulo conformal
transformations. Moreover,
\begin{equation}\label{eq:Konig-thresholds-intro}
 c_s<\frac{2}{n+2+2s},
 \qquad
 c_s<\frac{2-2^{\frac{n-2s}{n}}}{2s}.
\end{equation}
\end{theoremC}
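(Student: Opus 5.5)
The plan is a concentration-compactness argument in which the two strict inequalities \eqref{eq:Konig-thresholds-intro} are the energy levels below which no loss of compactness can occur. So I would prove the two strict inequalities first by explicit test functions, and then show that any normalized minimizing sequence which is not precompact modulo $\operatorname{Conf}(\Sn)$ has its quotient bounded below, in the limit, by one of these two thresholds.

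\textbf{The two thresholds.} Write $\lambda_\ell$ for the eigenvalue of $P_s$ on degree-$\ell$ harmonics and $p=p_s$.

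The first threshold is the local (spectral-gap) constant. At $h=\one$, the second variation of $\cD_s$ in a direction $\varphi\perp\{\ell\le1\}$ equals $Q_s(\varphi)-(p-1)\norm{\varphi}_2^2$. Since $p-1=\lambda_1$, the quotient $\bigl(Q_s(\varphi)-\lambda_1\norm{\varphi}_2^2\bigr)/Q_s(\varphi)$ is minimized at $\ell=2$, where it equals $1-\lambda_1/\lambda_2=\frac{4s}{n+2+2s}$. After the factor $\frac1{2s}$ this gives the constant $\frac{2}{n+2+2s}$.

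To get strict inequality $c_s<\frac{2}{n+2+2s}$, I would expand $\cD_s(\one+\eps\varphi)$ and $\dist_s(\one+\eps\varphi,\cM_s)^2$ to order $\eps^3$ and $\eps^4$, with $\varphi$ a degree-$2$ harmonic. The distance must be handled carefully, since the optimal $h$ moves at order $\eps^2$. The step is:
\begin{itemize}
\item If the cubic term $\int\varphi^3\,\dd\sigma$ is nonzero, choosing the sign of $\eps$ lowers the quotient at order $\eps$.
\item Otherwise I would add an $\eps^2\psi$ correction, optimize over $\psi$, and show the quartic coefficient is negative.
\end{itemize}
Since $\int Y_2^3\,\dd\sigma\ne0$ for a zonal $Y_2$ when $n\ge2$, I expect the cubic route to suffice.

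The second threshold comes from two bubbles. Take $u=h_{\xi_1,s}+h_{\xi_2,s}$ with $\xi_1,\xi_2$ normalized in $L^p$ and concentrating at distinct points. Then $Q_s(u)\to2$, $\norm{u}_p^2\to2^{2/p}$ and $\dist_s^2\to1$, which gives the value $(2-2^{(n-2s)/n})/(2s)$. To obtain strictness I would:
\begin{itemize}
\item expand the interaction integral $\int h_1^{p-1}h_2$ to leading order $\delta$;
\item check that it lowers $\cD_s$ by a multiple of $\delta$, namely $2\delta-2^{2/p}\cdot\frac{2}{p}\cdot\frac{p}{2}\delta\cdot 2\cdot 2^{-1}>\dots$;
\item check that the distance changes by a smaller relative amount.
\end{itemize}
If needed, I would use unequal weights $h_1+t h_2$ with $t$ near $1$ and optimize over $t$.

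\textbf{Compactness.} Let $(u_k)$ be minimizing with $\norm{u_k}_p=1$. By the first step, $\liminf\dist_s(u_k,\cM_s)>0$: if the distance tended to $0$, a quantitative local expansion (Bianchi–Egnell type, using the spectral gap above) would force the quotient to be at least $\frac{2}{n+2+2s}-o(1)$.

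Next apply a profile decomposition in $H^s(\Sn)$ after composing with conformal maps (for instance Hersch-normalizing so that the centre of mass of $|u_k|^p$ vanishes), and write $u_k=v+w_k$ with $w_k\weak0$. Then:
\begin{itemize}
\item $Q_s$ splits exactly up to $o(1)$;
\item the Brezis–Lieb lemma gives $\norm{u_k}_p^p=\norm{v}_p^p+\norm{w_k}_p^p+o(1)$;
\item the distance satisfies $\dist_s(u_k)^2\le\min\bigl(Q_s(u_k-h_v),\,Q_s(u_k)\bigr)$-type bounds, where $h_v$ is a best approximation of $v$.
\end{itemize}
With $a=\norm{v}_p^p$ and $b=\lim\norm{w_k}_p^p$, so $a+b=1$, and using $Q_s\ge\norm{\cdot}_p^2$ on each piece, the quotient is bounded below by an explicit function of $(a,b)$. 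I would check by concavity of $t\mapsto t^{2/p}$ that this function is at least the two-bubble value whenever $a,b>0$. Vanishing ($v=0$) is excluded by the Hersch normalization, so $b=0$, the convergence is strong, and the weak limit is a minimizer.

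\textbf{Main obstacle.} I expect the dichotomy estimate to be the hardest step. The distance of a sum of two separating pieces is not simply additive, and the crude bound $\dist^2\le Q_s(w_k)$ together with $\cD_s(v)\ge 2s\,c_s\dist(v)^2$ must be combined into an inequality that exactly reproduces the two-bubble threshold. I would first try reducing to the case where each piece is replaced by its own optimal bubble plus remainder, then prove a scalar inequality in $(a,b,\dist(v),\dist(w))$. The second difficulty is the exact algebra of the two-bubble interaction expansion needed to certify the strict inequality.
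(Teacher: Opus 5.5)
The paper does not prove Theorem C; it cites König. Your threshold constructions are the right test functions.

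\textbf{The two thresholds.} The cubic route for the spectral-gap bound works. In fact $m_s(\one+\eps\varphi)=1$ exactly for small $\eps$ when $\varphi\in\mathcal H_2$, so the optimal bubble does not move at all and $\dist_s^2=\eps^2Q_s(\varphi)$. For the two-bubble bound equal weights suffice, but your displayed arithmetic is wrong. With $\delta=\int h_1^{p-1}h_2\,d\sigma$ one has $\norm{h_1+h_2}_p^p=2+2p\delta+o(\delta)$, hence $\norm{h_1+h_2}_p^2=2^{2/p}(1+2\delta)+o(\delta)$. So $\cD_s$ drops by $2(2^{2/p}-1)\delta$. The denominator step still needs the upper bound $m_s(h_1+h_2)\le(1+\delta)^2+o(\delta)$, which you have not shown.

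\textbf{The gap: the dichotomy step.} Write $\alpha=2/p$, $X=\lim Q_s(u_k)$ and $m=\lim m_s(u_k)$. The quotient is $(X-1)/(X-m)$, which is increasing in $m$. Sobolev on the pieces gives no lower bound on $m$, so it cannot yield a function of $(a,b)$ reaching $2-2^\alpha$. The best it gives is $1-(a^\alpha+b^\alpha)^{-1}$, which equals $\tfrac12(2-2^\alpha)$ at $a=b$. You must feed the definition of $c_s$ back into the pieces.

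Organizing the split around the Hersch-normalized weak limit also fails, for three reasons:
\begin{enumerate}
\item Hersch normalization does not exclude $v=0$: two bubbles concentrating at antipodal points have zero barycentre.
\item It does not make $v$ the heaviest piece. Your bounds $\cD_s(v)\ge 2sc_s\dist_s(v)^2$, $\dist_s(u_k)^2\le\dist_s(v)^2+Q_s(w_k)+o(1)$ and $Q_s(w_k)\ge b^\alpha$ only give $2sc_s\ge(a^\alpha+b^\alpha-1)/b^\alpha$. This is below $2-2^\alpha$ once $b>a$.
\item Nonvanishing of at least one profile must come from $\norm{u_k}_p=1$ through the profile decomposition, not from Hersch normalization.
\end{enumerate}

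\textbf{The repair.} Use the full profile decomposition with masses $a_j$, where $\sum_ja_j=1$, $S_\alpha=\sum_ja_j^\alpha$ and $A=\max_ja_j$. Then invoke the multi-piece inequality of Lemma \ref{lem:fractional-mixing}; concavity of $t\mapsto t^\alpha$ alone only gives $S_\alpha>1$. There are two ways to finish.
\begin{enumerate}
\item Normalize conformally so that the heaviest profile is the weak limit $v$. Then $\liminf Q_s(w_k)\ge\sum_{j\ne1}a_j^\alpha$, and your one-sided estimate becomes $2sc_s\ge(S_\alpha-1)/(S_\alpha-A^\alpha)$. The first inequality of Lemma \ref{lem:fractional-mixing} says this is $\ge 2-2^\alpha$.
\item Follow König's argument, as in the paper's small-$s$ analogue (the proof of Proposition \ref{prop:scale-cc}). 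Apply $Q_s(u^j)-a_j^\alpha\ge2sc_s\bigl(Q_s(u^j)-m_s(u^j)\bigr)$ to every piece and use $m_s(u_k)=\max_jm_s(u^j)+o(1)$. Combine this with the second inequality of that lemma for $q_j=m_s(u^j)\le a_j^\alpha$.
\end{enumerate}
Either way, the strict bound $2sc_s<2-2^\alpha$ forces a single profile. Only then does your final step — strong convergence and minimality of the limit — go through.
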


The two bounds in \eqref{eq:Konig-thresholds-intro} correspond to the
one-bubble and two-bubble thresholds, whose limits are
$\frac{2}{n+2}$ and $\frac{2\log2}{n}$, respectively.
More generally, blow-up and compactness phenomena for fixed-order
conformally invariant nonlocal equations have been studied
in the fractional Nirenberg problem \cite{JinLiXiong2014}.  Such fixed-order results do not by themselves settle the present limit.  As
$s\to0^+$, a bound on $\frac{\cD_s(u)}{2s}$
 does not   separately control the logarithmic energy and the entropy, so uniform
compactness of the fractional minimizers requires additional estimates.
\medskip

We prove that $c_{\rm LS}$ is attained for every $n\geq2$.
First, we prove a strict upper bound below the two-bubble threshold.
We obtain this bound from the leading interaction of two conformal
extremals. We use the Euclidean realizations $\cD_{\Rn}$ and $\cM_{\Rn}$ of
$\cD$ and $\cM$,   defined  in
Section \ref{sec:two-bubble}.

\begin{theorem}\label{thm:two-bubble-intro}
Let $n\geq1$. In the Euclidean case 
of Theorem A, let
\[
 B(x)=b_n(1+|x|^2)^{-\frac{n}{2}},\qquad
 b_n^2=\frac{\Gamma(n)}{\pi^{\frac{n}{2}}\Gamma(\frac{n}{2})},\qquad
 B_\lambda(x)=\lambda^{\frac{n}{2}}B(\lambda x).
\]
Then, as $\lambda\to0^+$,
\begin{equation}\label{eq:two-bubble-main-intro}
 \begin{split}
 \frac{\cD_{\Rn}(B+B_\lambda)}{\dist(B+B_\lambda,\cM_{\Rn})^2}
 & =\frac{2\log2}{n}
 -\frac{\Gamma(n)}{\Gamma(\frac{n}{2})^2}
 \lambda^{\frac{n}{2}}\left(\log\frac{1}{\lambda}\right)^2+O_n\!\left(\lambda^{\frac{n}{2}}\log\frac{1}{\lambda}\right).
 \end{split}
\end{equation}
In particular,
\begin{equation}\label{eq:cLS-c2b-intro}
 c_{\rm LS}<\frac{2\log2}{n}.
\end{equation}
\end{theorem}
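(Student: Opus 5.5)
The plan is to compute numerator and denominator of the quotient separately, to second order in the interaction. We normalise $\norm{B}_2=\norm{B_\lambda}_2=1$ and use the two properties of $B$ that Theorem A gives: $\cD_{\Rn}$ is invariant under $L^2$-unitary dilations and translations, and $B$ is a critical point of $\cD_{\Rn}$. The interaction will be measured by
\[
 \eps_\lambda:=\int_{\Rn}BB_\lambda\dd x .
\]
For $1\ll|x|\ll\lambda^{-1}$ one has $B_\lambda\approx b_n\lambda^{\frac n2}$ and $B\approx b_n|x|^{-n}$. So $\eps_\lambda=c_n\lambda^{\frac n2}\log\frac1\lambda+O(\lambda^{\frac n2})$, with $c_n$ explicit. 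This logarithm, together with the factor $\log B^2\approx-2n\log|x|$ on the same annulus, is the source of the $(\log\frac1\lambda)^2$ term.

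\textbf{Numerator.} We first write the Euler--Lagrange equation of $B$ in the Euclidean realization. Differentiating $\cD_{\Rn}$ at $B$ (with $\norm{B}_2=1$) should give
\[
 \cH B=\tfrac1n B\log B^2+\mu_n B
\]
for an explicit constant $\mu_n$, and dilation invariance gives the same identity for $B_\lambda$ with $\log B_\lambda^2$. Hence:
\begin{itemize}
\item the quadratic part is $\ip{B+B_\lambda}{\cH(B+B_\lambda)}=\ip{B}{\cH B}+\ip{B_\lambda}{\cH B_\lambda}+\tfrac2n\int BB_\lambda\log B^2\dd x+2\mu_n\eps_\lambda$;
\item for the entropy we split $\Rn$ into $\{B\geq B_\lambda\}$, roughly $\{|x|\lesssim\lambda^{-1/2}\}$, and its complement, and on each piece Taylor-expand $(B+B_\lambda)^2\log(B+B_\lambda)^2$ around the dominant bubble;
\item the normalisation is $\norm{B+B_\lambda}_2^2=2+2\eps_\lambda$, so $\log\norm{u}_2^2=\log2+\eps_\lambda+O(\eps_\lambda^2)$.
\end{itemize}
The zeroth-order terms combine, using $\cD_{\Rn}(B)=\cD_{\Rn}(B_\lambda)=0$, into $\frac1n\cdot2\log2$. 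The first-order terms pair the cross term of $\ip{u}{\cH u}$ against the linearised entropy. These cancel up to the mismatch coming from $\log B^2$ versus $\log B_\lambda^2$ on the overlap region, and from the genuinely nonlinear part $(B+B_\lambda)^2\log(1+B_\lambda/B)$ near the crossover sphere.

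I expect the surviving contribution to be
\[
 -\tfrac1n\int BB_\lambda\log\frac{B^2}{B_\lambda^2}\dd x
 \sim -\tfrac{2}{n}\, b_n^2\lambda^{\frac n2}\int_{1\lesssim|x|\lesssim\lambda^{-1/2}}|x|^{-n}\, n\log\frac{1}{\lambda|x|^2}\dd x\cdot\tfrac12 ,
\]
which is of order $\lambda^{\frac n2}(\log\frac1\lambda)^2$. Its coefficient should come out to be $\frac{\Gamma(n)}{\Gamma(\frac n2)^2}\cdot 2$ after using $b_n^2|\mathbb S^{n-1}|=\frac{2\Gamma(n)}{\Gamma(\frac n2)^2}$. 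All remaining terms, including the crossover region, are $O(\lambda^{\frac n2}\log\frac1\lambda)$.

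\textbf{Denominator.} For any $h\in\cM_{\Rn}$, minimising over the scalar gives $\dist^2=\norm{u}_2^2-\sup_{\norm{h}_2=1}\ip{u}{h}^2$. Taking $h=B$ yields $\ip{u}{B}=1+\eps_\lambda$, hence
\[
 \dist^2\leq 2+2\eps_\lambda-(1+\eps_\lambda)^2=1-\eps_\lambda^2 .
\]
We then need the matching lower bound: no other normalised bubble $B_{\mu,y}$ has $\ip{B+B_\lambda}{B_{\mu,y}}>1+\eps_\lambda+O(\lambda^{\frac n2})$. For this we use the explicit overlap $\ip{B}{B_{\mu,y}}=F(\text{hyperbolic distance})$, which is strictly decreasing with maximum $1$, and split into cases according to whether $B_{\mu,y}$ is close to $B$, close to $B_\lambda$, or far from both. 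Near $B$ a second-order expansion in $(\log\mu,y)$ shows the maximum is attained at $B$ up to $O(\eps_\lambda^2)$. Far from both, the overlap with $B+B_\lambda$ stays bounded below $1$. Thus $\dist^2=1+O(\lambda^{n}\log^2\frac1\lambda)$. Dividing, the quotient equals $\frac{2\log2}{n}$ plus the numerator correction plus $O(\lambda^{\frac n2}\log\frac1\lambda)$, which is the claimed expansion. Since the correction is negative for small $\lambda$, the quotient is strictly below $\frac{2\log2}{n}$, giving $c_{\rm LS}<\frac{2\log2}{n}$.

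\textbf{Main obstacle.} The hardest step is the precise numerator bookkeeping: all $O(\lambda^{\frac n2}\log\frac1\lambda)$ terms must cancel or be absorbed, and only the squared-log term may survive with the exact constant $\frac{\Gamma(n)}{\Gamma(\frac n2)^2}$. My first attempt will be to reduce everything to the single integral $\int BB_\lambda\log(B/B_\lambda)\dd x$ via the Euler--Lagrange identity, and then evaluate it by the substitution $r=|x|$ on $[1,\lambda^{-1}]$. The crossover region will be controlled by the elementary bound $|(a+b)^2\log(a+b)^2-a^2\log a^2-2ab\log a^2|\lesssim b^2(1+|\log a|)$ for $b\leq a$, and symmetrically for $a\leq b$.
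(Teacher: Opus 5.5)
Your strategy is essentially the paper's. The Euler--Lagrange equation of each bubble, together with the symmetry of the logarithmic multiplier, removes the cross terms of the quadratic form. What remains is an entropy interaction living on $1\le|x|\le\lambda^{-1/2}$ and on its image under the inversion exchanging $B$ and $B_\lambda$, and the denominator comes from the Hilbert distance formula.

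The paper packages the numerator into the exact identity $n\cD_{\Rn}(G+H)=(2+2\rho)\log(2+2\rho)-2\int_{\Rn}(G+H)^2h\bigl(\frac{G}{G+H}\bigr)\dd x$, with $h$ the binary entropy. This avoids region-by-region Taylor bookkeeping. For the denominator it uses Riesz rearrangement to reduce to $\sup_{\mu>0}[F(\mu)+F(\lambda/\mu)]$ with $F(t)=\ip{B}{B_t}$, and proves only $1+O(\lambda^{\frac n2}\log\frac1\lambda)$. Your case analysis in $(\mu,y)$ gives the sharper $1+O(\eps_\lambda^2)$, which is more than is needed.

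However, two computations in your numerator sketch are wrong as written, and the theorem asserts an exact coefficient.

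\emph{The crossover bound.} The part of $(a+b)^2\log(a+b)^2$ linear in $b$ is $2ab\log a^2+2ab$. So for $b\ll a$ the left side of your inequality is about $2ab$, not $O(b^2(1+|\log a|))$. The missing $2ab$, together with its mirror image where $B_\lambda$ dominates, integrates to $2\eps_\lambda=O(\lambda^{\frac n2}\log\frac1\lambda)$. It is therefore harmless once included.

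\emph{The surviving term.} The integral $\int_{\Rn}BB_\lambda\log\frac{B^2}{B_\lambda^2}\dd x$ that you expect to survive is identically zero. This is precisely the identity $\ip{B_\lambda}{LB}=\ip{LB_\lambda}{B}$ combined with the Euler--Lagrange equations, or equivalently the inversion symmetry. Expanding about the dominant bubble on each side of $\{B=B_\lambda\}=\{|x|=\lambda^{-1/2}\}$ actually gives
\[
n\cD_{\Rn}(B+B_\lambda)=2\log2-\int_{\Rn}BB_\lambda\Bigl|\log\frac{B^2}{B_\lambda^2}\Bigr|\dd x+O\Bigl(\lambda^{\frac n2}\log\frac1\lambda\Bigr),
\]
and the two sides of the crossover sphere contribute equally. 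Hence the correction to $\cD_{\Rn}$ is $-\frac2n$ times your annulus integral
\[
b_n^2\lambda^{\frac n2}\int_{1\le|x|\le\lambda^{-1/2}}|x|^{-n}\,n\log\frac{1}{\lambda|x|^2}\dd x=\frac n4\, b_n^2|\mathbb S^{n-1}|\lambda^{\frac n2}\log^2\frac1\lambda .
\]
This is exactly $-\frac{\Gamma(n)}{\Gamma(\frac n2)^2}\lambda^{\frac n2}\log^2\frac1\lambda$. Your display, with its extra factor $\frac12$, gives half of this coefficient, and your stated value $\frac{\Gamma(n)}{\Gamma(\frac n2)^2}\cdot 2$ is twice it.

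The sign is right, so the strict bound $c_{\rm LS}<\frac{2\log 2}{n}$ survives these slips. The exact expansion in the statement, however, requires the corrected bookkeeping.
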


Together with  $\limsup\limits_{s\to0^+}c_s\leq c_{\rm LS}$
 proved in Section \ref{sec:prelim}, the strict
two-bubble bound \eqref{eq:cLS-c2b-intro} yields the uniform gap
 below the fractional two-bubble
threshold for all sufficiently small $s$. Then we prove uniform compactness and obtain our main result.

\begin{theorem}\label{thm:main}
Let $n\geq2$. There exists a minimizer $u\in\cX\setminus\cM$ such that
\begin{equation}\label{eq:main-attainment}
 c_{\rm LS}=\frac{\cD(u)}{\dist(u,\cM)^2}.
\end{equation}
Moreover,
\[
 \lim_{s\to0^+}c_s=c_{\rm LS}.
\]
Finally, if $s_k\to0^+$, and for each $k$, $u_k$ is a normalized
minimizer for $c_{s_k}$, then after Hersch normalization, there exist
a subsequence $\{k_j\}$ and a normalized minimizer $u_\infty$ for
$c_{\rm LS}$ such that
\[
 u_{k_j}\longrightarrow u_\infty
 \quad\text{strongly in }L^2(\Sn).
\]
\end{theorem}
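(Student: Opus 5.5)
The plan is to obtain the log-Sobolev minimizer as a limit of K\"onig's fractional minimizers from Theorem C as $s\to0^+$, rather than through a direct concentration-compactness argument for $\cD$. The inputs are the inequality $\limsup_{s\to0^+}c_s\le c_{\rm LS}$ (from Section \ref{sec:prelim}) and the strict two-bubble bound \eqref{eq:cLS-c2b-intro}. Together they give a uniform gap
\[
 c_s\le\frac{2\log2}{n}-\delta \qquad \text{for all small } s,
\]
with some $\delta>0$. Since $\frac{2-2^{(n-2s)/n}}{2s}\to\frac{2\log2}{n}$, this places $c_s$ uniformly below the fractional two-bubble threshold.

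\textbf{Step 1: normalization.} Fix $s_k\to0^+$ and minimizers $u_k$ with $\norm{u_k}_{p_{s_k}}=1$. Replace $u_k$ by $u_k-h_k$ rescaled, where $h_k\in\cM_{s_k}$ is a nearest point. Then apply a conformal map (Hersch normalization), so that the nearest optimizer is a constant and the center of mass of $|u_k|^{p_{s_k}}$ is at the origin. The Euler--Lagrange equation for $c_{s_k}$, together with the orthogonality of $u_k-h_k$ to the tangent space of $\cM_{s_k}$, yields an equation of the form
\[
 P_{s_k}u_k-|u_k|^{p_{s_k}-2}u_k = 2s_k\,c_{s_k}\,P_{s_k}(u_k-h_k)+\text{Lagrange terms}.
\]

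\textbf{Step 2: uniform bounds.} I would show that $\frac{Q_{s_k}(u_k)-1}{2s_k}$ stays bounded; this is the log energy $\ip{u_k}{\cH u_k}$ to leading order. The difficulty is that a bound on $\cD_{s_k}/2s_k$ alone does not separate the energy from the entropy.

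The approach I would try first is a profile decomposition at scale $s_k$. Write the deficit as
\[
 \frac{\cD_{s_k}(u)}{2s_k}\approx \ip{u}{\cH u}-\frac1n\int u^2\log u^2\,\dd\sigma+o(1),
\]
uniformly on sets where $\ip{u}{\cH u}$ is bounded. This uses $\frac{\Lambda^{(s)}_\ell-1}{2s}\uparrow\Lambda_\ell$ (the eigenvalue ratio is monotone in $\ell$ by convexity of $\log\Gamma$), and $\frac{\norm{u}_{p_s}^2-\norm{u}_2^2}{2s}\to\frac1n\int u^2\log(u^2/\norm u_2^2)$. If the energy blows up, then $L^2$ mass concentrates: the uniform log-HLS/Beckner structure forces bubbling, with mass splitting into at least two conformally separated pieces near $\cM$. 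Indeed, one piece carrying almost all the mass would be excluded by the Hersch normalization combined with local stability near $\cM$, which has constant $\frac{2}{n+2}>c_{\rm LS}$ by \eqref{eq:strict-local-intro}. Two separated pieces give a quotient at least $\frac{2\log2}{n}-o(1)$, by the energy splitting and the concavity inequality $\sum a_i\log a_i\le\dots$ that underlies the two-bubble computation. This contradicts the gap $\delta$.

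This step, showing that the gap below both thresholds forces tightness of $u_k^2\,\dd\sigma$ and boundedness of $\ip{u_k}{\cH u_k}$ uniformly in $s$, is the main obstacle. For it I would adapt K\"onig's fixed-$s$ argument, tracking every constant as $O(s)$ after dividing by $2s$.

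\textbf{Step 3: passage to the limit.} With $\ip{u_k}{\cH u_k}$ bounded and $\Lambda_\ell\to\infty$, the embedding $\cX\hookrightarrow L^2$ is compact. Hence $u_k\to u_\infty$ in $L^2$ along a subsequence, with $\norm{u_\infty}_2=1$. Lower semicontinuity of the quadratic form, together with convergence of the entropy term (by uniform integrability from the bounded log energy, via the log-Sobolev inequality itself), gives
\[
 \cD(u_\infty)\le\liminf_k\frac{\cD_{s_k}(u_k)}{2s_k}.
\]
The distances also converge:
\[
 \frac{\dist_{s_k}(u_k,\cM_{s_k})}{\dist(u_\infty,\cM)}\to1,
\]
using the normalization and the fact that $Q_{s_k}\to\norm{\cdot}_2^2$ on bounded-energy sets. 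The limit is not in $\cM$, because local stability near $\cM$ would otherwise force the quotient to approach $\frac{2}{n+2}>c_{\rm LS}$.

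Combining these,
\[
 c_{\rm LS}\le\frac{\cD(u_\infty)}{\dist(u_\infty,\cM)^2}\le\liminf_k c_{s_k}\le\limsup_k c_{s_k}\le c_{\rm LS},
\]
which gives attainment, $\lim_{s\to0^+}c_s=c_{\rm LS}$, and the stated strong $L^2$ convergence.
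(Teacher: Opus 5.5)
The gap is in Step 2, and Step 3 inherits it. Your outer framework (recovery bound, two-bubble gap, Hersch normalization, compactness of $\cX\hookrightarrow L^2$, lower limit, exclusion of $\cM$ via $c_{\rm LS}<\frac{2}{n+2}$) matches the paper's. But $\frac{Q_{s_k}(u_k)-1}{2s_k}=\frac{\cD_{s_k}(u_k)}{2s_k}$ is bounded for free by the minimizing identity. It is energy minus entropy, not the log energy. What must be bounded is $\ip{u_k}{\cH u_k}$, equivalently $\frac{1-\norm{u_k}_2^2}{2s_k}$, and your sketch cannot deliver this. A splitting/tightness argument driven by the quotient gap yields at best uniform non-concentration of $|u_k|^{p_k}\,\dd\sigma$. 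Moreover, it must run with $o(s_k)$, not $O(s_k)$, localization errors, since the deficit is itself $O(s_k)$.

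Non-concentration does not control the log energy. Take a fixed smooth Hersch-normalized $v\notin\cM$ with quotient near $c_{\rm LS}$. Add a spike $c_kh_{\xi_k,s_k}$ with $|c_k|^{p_k}=\eta_k=1/\log\frac1{s_k}$, concentrated at scale $e^{-C/s_k}$. Then:
\begin{itemize}
\item the deficit changes by $O(s_k\eta_k\log\frac1{\eta_k})=o(s_k)$ and the squared distance by $O(\eta_k)$, so the quotient moves by $o(1)$ and stays below the two-bubble threshold;
\item there is no splitting into pieces of positive limiting mass and no concentration of $|u_k|^{p_k}\dd\sigma$ or $u_k^2\,\dd\sigma$;
\item the Hersch condition is restored by a near-identity conformal map;
\item yet $\ip{u_k}{\cH u_k}$ and $\frac{1-\norm{u_k}_2^2}{2s_k}$ are of order $\eta_k/s_k\to\infty$.
\end{itemize}
Your argument uses only properties this sequence also has. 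So exact minimality, i.e.\ the Euler--Lagrange equation, is indispensable, and your Step 2 never uses it.

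Here is how the paper uses it. It writes the equation with the whole active set of nearest bubbles (Proposition \ref{prop:dual-critical}). Your single-$h_k$ version presupposes a unique nearest point. Also, one conformal map cannot both make the nearest bubble constant and impose Hersch. Lemmas \ref{lem:nonnegative-nonvanishing} and \ref{lem:sign-mixing} give a nonzero weak limit. Hence all active parameters stay compact and $W_k$ is uniformly bounded. Evaluating $u_k=P_{s_k}^{-1}(\cdots)$ at a maximum point, the near-field constant $A_{n,s}=1+O(s)$ would force a fixed critical mass into balls of radius $\norm{u_k}_\infty^{-p_k/n}\to0$ if $\norm{u_k}_\infty\to\infty$. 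This contradicts non-concentration. That uniform $L^\infty$ bound is the missing idea. It gives $1-\norm{u_k}_2^2=O(s_k)$ and then the log-energy bound via Lemma \ref{lem:spectral-domination}.

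Step 3 needs the same bound.
\begin{itemize}
\item Bounded log energy does not give uniform integrability of $u_k^2\log u_k^2$. With $h_k=h_{\xi_k,0}$, $|\xi_k|\to1$ and $\eps_k^2\ip{h_k}{\cH h_k}=1$, set $u_k=\one+\eps_kh_k$. Then $\ip{u_k}{\cH u_k}=1$ since $\cH\one=0$, and $u_k\to\one$ in $L^2$. But $\int_{\Sn}u_k^2\log u_k^2\,\dd\sigma\to n\neq0$.
\item Identifying $\frac{\norm{u}_{p_s}^2-\norm{u}_2^2}{2s}$ with the entropy requires $s\log|u|$ small, which fails at heights $e^{c/s}$.
\item $m_{s_k}(u_k)\to m(u_\infty)$ does not follow from $Q_{s_k}\to\norm{\cdot}_2^2$. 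One must exclude correlation with escaping bubbles, which uses non-concentration (Lemma \ref{lem:escaping-corr-zero}).
\item Excluding $u_\infty\in\cM$ is not plain local stability. Neither the endpoint nor the fixed-$s$ expansion is uniform when $s_k$ and $\norm{u_k-h}_2$ vanish at unrelated rates. The paper uses the residual bound \eqref{eq:residual-bound} and the rate-free Lemma \ref{lem:uniform-entropy-expansion}, both relying on the $L^\infty$ bound.
\end{itemize}
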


The proof proceeds through two different compactness steps. First,
slowly varying cutoffs in a conformal cylinder localize the energy
with $o(s)$ errors, including across relative conformal scales of
order $e^{\frac{c}{s}}$.  Concentration-compactness gives two alternatives:
tightness or splitting. The uniform two-bubble gap rules out splitting.  Hersch normalization prevents the entire critical mass from drifting to a
pole, which   yields uniform non-concentration on the sphere. We  establish weak nonvanishing and rule out
 sign cancellation  in Section \ref{sec:uniform-boundedness}.

Non-concentration alone does not exclude sign cancellation or
unbounded maxima. We  derive an Euler-Lagrange equation using
the whole active-set of nearest bubbles, since the nearest bubble
need not be unique. Adaptive localization and a sign-mixing estimate
produce a nonzero weak limit and keep all active bubble parameters in a compact set. 
If the maximum were unbounded, the inverse-kernel estimate with
near-field coefficient $1+O(s)$ would yield a fixed amount of mass
in a shrinking ball. This would contradict non-concentration.
 The resulting
uniform $L^\infty$ bound separates the energy and entropy and gives
strong $L^2$ compactness.
The one-bubble lower bound, together with the strict local gap,
excludes convergence to the optimizer manifold. The recovery inequality and
lower-limit inequality yield attainment and
$c_s\to c_{\rm LS}$.
\medskip

The paper is organized as follows. Section \ref{sec:prelim} establishes
the preliminaries. Section \ref{sec:two-bubble}
proves Theorem \ref{thm:two-bubble-intro}.
Sections \ref{sec:localization} and \ref{sec:uniform-boundedness} establish
uniform non-concentration and boundedness of minimizers for $c_s$.
Section \ref{sec:attainment-proof} completes the proof of
Theorem \ref{thm:main}. The appendices contain the auxiliary proofs.
\medskip

\section{Preliminaries}\label{sec:prelim}
As $s\to0^+$, the fractional stability problem approaches the conformal log-Sobolev problem. 
We   establish recovery for the deficit and the
distance to the optimizer manifold.
  For every smooth test function, we first
  show that the  stability
quotient of the fractional Sobolev inequality converges to the  stability quotient of log-Sobolev inequality,
which yields the upper bound $\limsup\limits_{s\to0^+}c_s\leq c_{\rm LS}$.

Let $\mathcal H_\ell$ be the space of spherical harmonics of degree $\ell$,
and choose an orthonormal basis $\{Y_{\ell,m}\}_{m=1}^{N(n,\ell)}$.
Every  $u\in L^2(\Sn)$ has the expansion
\[u=\sum\limits_{\ell=0}^\infty\sum\limits_{m=1}^{N(n,\ell)}u_{\ell,m}Y_{\ell,m},\]
and the logarithmic form is
\[ \ip{u}{\cH u}=\sum\limits_{\ell=0}^\infty\Lambda_\ell
 \sum\limits_{m=1}^{N(n,\ell)}\abs{u_{\ell,m}}^2.
\] We equip its domain $\cX$ with the form norm
\begin{equation*}
 \norm{u}_{\cX}^2=\norm{u}_2^2+\ip{u}{\cH u}.
\end{equation*}
Since $\Lambda_\ell\to\infty$, the embedding
$\cX\hookrightarrow L^2(\Sn)$ is compact.
We will use this compactness after establishing a uniform
form-norm bound for the fractional minimizers.
\medskip

Order differentiation controls the numerator of the stability quotient. The
denominator is more delicate because it involves minimization over the full
conformal optimizer manifold. We therefore introduce the $L^2$-normalized
optimizer family
\begin{equation}\label{eq:endpoint-bubble-param}
 \cM_{0,1}:=\{h\in\cM:\norm{h}_2=1\}
 =\{\pm h_{\xi,0}:\xi\in\mathbb B^{n+1}\},
\end{equation}
where $h_{\xi,0}$ is given by \eqref{Beckner-extremal} at $s=0$.
Minimizing along each line in $\cM$ gives the following distance formula.

\begin{lemma}\label{lem:distance-formula}
For every $u\in L^2(\Sn)$,
\begin{equation}\label{eq:distance-formula}
 \dist(u,\cM)^2=\norm{u}_2^2-m(u),
 \qquad
 m(u):=\sup_{h\in\cM_{0,1}}\abs{\ip{u}{h}}^2.
\end{equation}
If $m(u)>0$, the supremum is attained.
\end{lemma}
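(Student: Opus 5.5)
We reduce the infimum over $\cM$ to a double minimization: first over the scalar multiple along each line, then over the direction. Every $h\in\cM\setminus\{0\}$ can be written as $h=c\,g$ with $c\in\R$ and $g\in\cM_{0,1}$, and $0=0\cdot g$, so $\cM=\{cg: c\in\R,\ g\in\cM_{0,1}\}$. This uses the identification \eqref{eq:endpoint-bubble-param}: each $J_\Phi^{1/2}$ is a positive multiple of some $h_{\xi,0}$, and $\norm{h_{\xi,0}}_2=1$ because $h_{\xi,0}^2$ is the Jacobian of a conformal map with respect to $\dd\sigma$. For fixed $g$ with $\norm{g}_2=1$ we expand
\[
\norm{u-cg}_2^2=\norm{u}_2^2-2c\ip{u}{g}+c^2 .
\]
This is minimized at $c=\ip{u}{g}$, with minimum value $\norm{u}_2^2-\ip{u}{g}^2$. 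Taking the infimum over $g\in\cM_{0,1}$ gives $\dist(u,\cM)^2=\norm{u}_2^2-\sup_g\ip{u}{g}^2=\norm{u}_2^2-m(u)$. Since $\pm g$ both lie in $\cM_{0,1}$, the absolute value in $m(u)$ is harmless.

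For attainment when $m(u)>0$, take a maximizing sequence $\xi_k\in\mathbb B^{n+1}$ with $\abs{\ip{u}{h_{\xi_k,0}}}^2\to m(u)$. If $\xi_k$ stays in a compact subset of the open ball, pass to a convergent subsequence $\xi_k\to\xi$. Continuity of $\xi\mapsto h_{\xi,0}$ in $L^2(\Sn)$, which holds by dominated convergence since the functions are uniformly bounded on compacts of the ball, then gives $\abs{\ip{u}{h_{\xi,0}}}^2=m(u)$, so the supremum is attained.

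The main step is excluding $\abs{\xi_k}\to1$. We show that $h_{\xi,0}\weak0$ weakly in $L^2$ as $\abs{\xi}\to1$. Testing against continuous $\varphi$, the function $h_{\xi,0}$ has unit $L^2$ norm and concentrates at the point $\xi/\abs{\xi}$. Explicitly, for $\abs{\xi}=1-\delta$ we have $h_{\xi,0}\lesssim \delta^{n/4}\abs{\omega-\xi/|\xi|}^{-n/2}$ away from the pole. Hence $\int h_{\xi,0}\varphi\,\dd\sigma\le \norm{\varphi}_\infty\int h_{\xi,0}\,\dd\sigma$. In the Euclidean picture, $\int h_{\xi,0}$ equals the $L^1$ norm of an $L^2$-normalized bubble at scale $\lambda\to0$ paired with the fixed weight $(1+|x|^2)^{-n/2}$, which behaves like $\lambda^{n/2}\log(1/\lambda)\to0$ (and only needs to tend to $0$). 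Density of continuous functions in $L^2$ and the uniform bound $\norm{h_{\xi,0}}_2=1$ then give $\ip{u}{h_{\xi_k,0}}\to0$, which contradicts $m(u)>0$. Thus $\xi_k$ is precompact in $\mathbb B^{n+1}$ and the supremum is attained.
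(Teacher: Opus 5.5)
Your proposal is correct and follows the paper's proof. Both arguments minimize $\norm{u-ch}_2^2$ along each line $c\,h$, take a maximizing sequence $\xi_k$, exclude $|\xi_k|\to1$ by showing that the correlations vanish, and conclude by $L^2$-continuity of $\xi\mapsto h_{\xi,0}$ inside the ball.

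The only variation is in the boundary step. You obtain $h_{\xi_k,0}\rightharpoonup0$ from $\int_{\Sn}h_{\xi_k,0}\,\dd\sigma\to0$ plus density of continuous functions. The paper instead uses uniform decay of $h_{\xi_k,0}$ away from the limit point together with absolute continuity of $\int u^2$ on small caps.

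One small correction: your pointwise bound should read $\delta^{n/4}|\omega-\xi/|\xi||^{-n}$, not exponent $-n/2$. This is harmless, since you only use it away from the pole.
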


\begin{proof}
For each $h\in\cM_{0,1}$,
\[
 \inf_{c\in\mathbb R^1}\|u-ch\|_2^2
 =\|u\|_2^2-|\langle u,h\rangle|^2,
\]
and taking the infimum over $h$ gives \eqref{eq:distance-formula}. If
$m(u)>0$, choose $\xi_k\in\mathbb B^{n+1}$ with
$|\langle u,h_{\xi_k,0}\rangle|^2\to m(u)$ and pass to
$\xi_k\to\xi\in\overline{\mathbb B^{n+1}}$. The case $|\xi|=1$ is
impossible, $h_{\xi_k,0}\to0$ uniformly away from $\xi$, and absolute
continuity of the $L^2$ integral gives
$\langle u,h_{\xi_k,0}\rangle\to0$. Hence $|\xi|<1$, and the explicit
formula gives $h_{\xi_k,0}\to h_{\xi,0}$ strongly in $L^2$, so the
supremum is attained.
\end{proof}

The endpoint differentiation is immediate for smooth functions, while the sharp
constant problem is posed on the form domain $\cX$. We therefore require a
smooth approximation that preserves both the quadratic form and the normalized
entropy. At $u=0$, the normalized entropy and $\cD(0)$ are understood by
continuity to be zero.

\begin{lemma}\label{lem:smooth-recovery-density}
For every $u\in\cX$, there is a sequence
$v_j\in C^\infty(\Sn)$ such that
\begin{align*}
 \norm{v_j-u}_{\cX}&\longrightarrow0,
  \qquad
 \int_{\Sn}v_j^2\log\frac{v_j^2}{\norm{v_j}_2^2}\,\dd\sigma
 \longrightarrow
 \int_{\Sn}u^2\log\frac{u^2}{\norm{u}_2^2}\,\dd\sigma.
\end{align*}
Consequently,
\begin{equation*}
 \cD(v_j)\longrightarrow\cD(u),
 \qquad
 \dist(v_j,\cM)\longrightarrow\dist(u,\cM).
\end{equation*}
If $u\notin\cM$, the sequence may be chosen in
$C^\infty(\Sn)\setminus\cM$ and
\begin{equation}\label{eq:smooth-quotient-density}
 c_{\rm LS}
 =\inf_{v\in C^\infty(\Sn)\setminus\cM}
 \frac{\cD(v)}{\dist(v,\cM)^2}.
\end{equation}
\end{lemma}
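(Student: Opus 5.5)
The approach is to use spectral truncation: set $v_j:=\sum_{\ell\le j}\sum_m u_{\ell,m}Y_{\ell,m}$. Each $v_j$ is a finite combination of spherical harmonics, hence smooth, and $\norm{v_j-u}_{\cX}^2=\sum_{\ell>j}(1+\Lambda_\ell)\sum_m|u_{\ell,m}|^2\to0$ because $u\in\cX$. In particular $v_j\to u$ in $L^2$, and $\norm{v_j}_2\to\norm{u}_2$. If $u=0$ there is nothing to prove, so assume $u\neq0$.

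The main obstacle is convergence of the entropy $\int v_j^2\log v_j^2\,\dd\sigma$. Since $\norm{v_j}_2^2\log\norm{v_j}_2^2$ converges, it suffices to show $\int v_j^2\log v_j^2\to\int u^2\log u^2$. Along a subsequence $v_j\to u$ a.e. The negative parts $v_j^2\log^-v_j^2\le e^{-1}$ are bounded on a probability space, so dominated convergence handles them. For the positive parts I will use uniform integrability. Beckner's inequality (Theorem A) gives $\int w^2\log\frac{w^2}{\norm w_2^2}\,\dd\sigma\le n\ip{w}{\cH w}$ for every $w\in\cX$. I apply it to the difference $w=v_j-u$. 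The standard inequality $(a+b)^2\log^+(a+b)^2\le C\bigl(a^2\log^+(Ca^2)+b^2\log^+(Cb^2)\bigr)$ then reduces uniform integrability of $v_j^2\log^+v_j^2$ to two facts: the fixed function $u^2\log^+u^2$ is integrable, and $\int w^2\log^+ w^2\to0$. The second holds because $\int w^2\log^+w^2\le \int w^2\log\frac{w^2}{\norm w_2^2}+\norm w_2^2\log^+\norm w_2^2+C\norm{w}_2^2\le n\ip{w}{\cH w}+o(1)\to0$. Vitali's theorem then gives convergence along the subsequence. Because the limit does not depend on the subsequence, the full sequence converges.

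With the entropy settled, $\cD(v_j)\to\cD(u)$ follows from $\ip{v_j}{\cH v_j}\to\ip{u}{\cH u}$. For the distance, note that $|\dist(v_j,\cM)-\dist(u,\cM)|\le\norm{v_j-u}_2$, since $\dist(\cdot,\cM)$ is a distance to a fixed set and hence $1$-Lipschitz in $L^2$.

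Now suppose $u\notin\cM$. By Lemma \ref{lem:distance-formula}, together with the closedness of $\cM$ in $L^2$ that its proof implies, we have $\dist(u,\cM)>0$, so $\dist(v_j,\cM)>0$ for large $j$. Thus $v_j\notin\cM$ eventually, and the quotients converge: $\cD(v_j)/\dist(v_j,\cM)^2\to\cD(u)/\dist(u,\cM)^2$. The infimum over the smaller class $C^\infty(\Sn)\setminus\cM$ is trivially at least $c_{\rm LS}$. The convergence of quotients gives the reverse inequality, which proves \eqref{eq:smooth-quotient-density}.
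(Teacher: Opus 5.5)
Your proof is correct, but it reaches the entropy convergence by a different route than the paper. The paper approximates by the heat semigroup $S_t=e^{t\Delta_{\mathbb S^n}}$ and uses its Markov property. Since $\Phi_+(x)=\max\{x^2\log x^2,0\}$ is convex, Jensen's inequality gives $\int\Phi_+(S_tu)\,d\sigma\le\int\Phi_+(u)\,d\sigma$, and Fatou supplies the reverse inequality; the bounded part $\Phi_-\le e^{-1}$ is handled by bounded convergence. There, Theorem A is used only once, to know that $\Phi(u)\in L^1$. Your spectral truncation does not preserve positivity, so that Jensen step is unavailable. You instead apply Theorem A to the error $w=v_j-u$, which bounds the $L^2\log L$-type modular $\int w^2\log^+w^2\,d\sigma$ by $n\langle w,\cH w\rangle+o(1)$, and Vitali then finishes. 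Your route proves something stronger: the normalized entropy is continuous in the $\cX$-norm along \emph{any} convergent sequence, so the choice of smoothing does not matter. The paper's route needs only $\Phi(u)\in L^1$, but it is tied to a positivity-preserving approximation. One step should be justified explicitly. The term $C\|w\|_2^2$ in your bound for $\int w^2\log^+w^2\,d\sigma$ cannot come from a pointwise estimate, since $w^2\log^-w^2\le Cw^2$ fails as $w\to0$. It does follow from $\log^+(xy)\le\log^+x+\log^+y$ with $x=w^2/\|w\|_2^2$, together with $\int F\log^-F\,d\sigma\le e^{-1}$ for $F=w^2/\|w\|_2^2$, which gives $C=e^{-1}$. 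Your distance and quotient steps match the paper's: closedness of $\cM$ in $L^2$ is equivalent to its argument via attainment of $m(u)$ and equality in Cauchy--Schwarz.
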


The proof in Appendix A uses heat-semigroup approximation for the form and
Jensen inequality for the positive part of the entropy. We now compare this
endpoint structure with the normalized fractional problem. For
$0<s<\frac{n}{2}$, recall the operator $P_s$ introduced in
\eqref{eq:intro-fractional-normalization}.
Its eigenvalue on $\mathcal H_\ell$ is
\begin{equation}\label{eq:frac-eigen}
 r_{s,\ell}
 :=
 \frac{\Gamma(\ell+\frac n2+s)}
      {\Gamma(\ell+\frac n2-s)}
 \frac{\Gamma(\frac n2-s)}
      {\Gamma(\frac n2+s)},
 \qquad
 P_sY_{\ell,m}=r_{s,\ell}Y_{\ell,m}.
\end{equation}
\medskip

For $s>0$, the corresponding $L^{p_s}$-normalized family is
\begin{equation}\label{eq:fractional-bubble-param}
 \cM_{s,1}:=\{h\in\cM_s:\norm{h}_{p_s}=1\}
 =\{\pm h_{\xi,s}:\xi\in\mathbb B^{n+1}\}.
\end{equation}
The second index denotes unit normalization.
Since
$\frac{n-2s}{2}p_s=n$, the density $h_{\xi,s}^{p_s}$
is independent of $s$ and is the  Jacobian of a M\"obius
transformation. Hence the normalization holds also at $s=0$.
For a signed normalized optimizer $h\in\cM_{s,1}$, define
\begin{equation}\label{eq:signed-dual-map}
 \mathfrak g_s(h):=|h|^{p_s-2}h.
\end{equation}
The normalization in
\eqref{eq:intro-fractional-normalization} and the Euler-Lagrange
equation for the fractional extremals imply that
\[
 P_sh_{\xi,s}=h_{\xi,s}^{p_s-1},
 \qquad
 \norm{h_{\xi,s}}_{p_s}=Q_s(h_{\xi,s})=1.
\]

Its dual weight is therefore
\begin{equation}\label{eq:fractional-dual-weight}
 g_{\xi,s}:=h_{\xi,s}^{p_s-1}
 =\left(\frac{\sqrt{1-|\xi|^2}}{1-\xi\cdot\omega}\right)^{\frac{n+2s}{2}},
 \qquad \norm{g_{\xi,s}}_{p_s'}=1.
\end{equation}
In particular, $\mathfrak g_s(\pm h_{\xi,s})=\pm g_{\xi,s}$. Define
\[
 m_s(u):=\sup_{h\in\cM_{s,1}}
 \left|\int_{\Sn}u\mathfrak g_s(h)\,\dd\sigma\right|^2=\sup_{\xi\in \mathbb B^{n+1}}
 \left|\int_{\Sn}u g_{\xi,s}\,\dd\sigma\right|^2.
\]
As in Lemma \ref{lem:distance-formula}, minimizing first in the scalar
multiple of a normalized optimizer yields
\begin{equation}\label{eq:ms-ds}
 \dist_s(u,\cM_s)^2=Q_s(u)-m_s(u).
\end{equation}
For fixed $s>0$ and $n\geq2$, the infimum defining $c_s$ is attained
\cite{Konig2025}.  The order-differentiation argument of
\cite{ChenLuTang2025} gives, for every fixed smooth $u$, as $s\to0^+$,
\begin{equation}\label{eq:fixed-numerator-recovery}
 \frac{1}{2s}\cD_s (u)\longrightarrow\cD(u).
\end{equation}
The denominator is more delicate. It involves maximal correlations over
the full optimizer family. We therefore establish their convergence.

\begin{lemma}
\label{lem:fixed-distance-recovery}
For every $u\in C^\infty(\Sn)$, as $ s\rightarrow0 ^+$,
\begin{equation}\label{eq:fixed-distance-recovery}
 \dist_s(u,\cM_s)^2\longrightarrow\dist(u,\cM)^2.
\end{equation}
\end{lemma}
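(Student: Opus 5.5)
By \eqref{eq:ms-ds} and Lemma \ref{lem:distance-formula}, it suffices to prove $Q_s(u)\to\norm{u}_2^2$ and $m_s(u)\to m(u)$ as $s\to0^+$.

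\emph{The quadratic form.} For each $\ell$ we have $r_{s,\ell}=1+2s\Lambda_\ell+O_\ell(s^2)$. Moreover $\log r_{s,\ell}\le 2s\Lambda_\ell$ up to a bounded multiplicative factor; one can check this by writing $\log r_{s,\ell}=\int_{-s}^{s}[\psi(\ell+\frac n2+t)-\psi(\frac n2+t)]\,dt$. Hence $|r_{s,\ell}-1|\le C s\Lambda_\ell e^{Cs\Lambda_\ell}$. Since $u$ is smooth, its coefficients decay faster than any power of $\ell$, while $\Lambda_\ell\sim\log\ell$ and $e^{Cs\Lambda_\ell}\lesssim \ell^{Cs}$. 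Dominated convergence in the spectral sum then gives
\[
Q_s(u)-\norm{u}_2^2=\sum_\ell (r_{s,\ell}-1)\sum_m|u_{\ell,m}|^2=O(s)\to0 .
\]

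\emph{Convergence of the maximal correlation, pointwise.} Write $f_s(\xi):=\int u g_{\xi,s}\,\dd\sigma$ and $f_0(\xi):=\ip{u}{h_{\xi,0}}$. Because $g_{\xi,s}=h_{\xi,0}^{(n+2s)/n}$, we have $g_{\xi,s}\to h_{\xi,0}$ pointwise for each fixed $\xi$, so $f_s\to f_0$ pointwise on $\mathbb B^{n+1}$. To make this uniform in $\xi$, change variables by the conformal map $\Phi_\xi$ sending the pole to $\xi$, whose Jacobian is $h_{\xi,0}^2$. This gives
\[
f_s(\xi)=\int (u\circ\Phi_\xi)\,h_{\xi,0}^{-(n-2s)/n}\circ\Phi_\xi\,\dd\sigma .
\]

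\emph{Uniformity in $\xi$: the main obstacle.} The key step is to show $\sup_\xi|f_s(\xi)-f_0(\xi)|\to0$. The difficulty is that as $|\xi|\to1$ the weights $g_{\xi,s}$ concentrate, and their behaviour must be handled uniformly in $\xi$.

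I would split into two regimes. On a compact set $|\xi|\le 1-\delta$, the weights $h_{\xi,0}$ are uniformly bounded above and below, so
\[
|g_{\xi,s}-h_{\xi,0}|\le C_\delta\, s\,|\log h_{\xi,0}|\, h_{\xi,0}\le C'_\delta s
\]
uniformly, and $f_s\to f_0$ uniformly there. Near the boundary, both correlations are small uniformly in $s$. Indeed,
\[
|f_0(\xi)|\le\norm{u}_\infty\int h_{\xi,0}\,\dd\sigma\lesssim (1-|\xi|)^{n/4}\norm{u}_\infty ,
\]
and similarly $\int g_{\xi,s}\,\dd\sigma\lesssim(1-|\xi|^2)^{(n-2s)/4}$ by an explicit computation: $g_{\xi,s}$ is an $L^{p_s'}$-normalized bubble whose $L^1$ norm decays like a power of $1-|\xi|$, with exponent bounded below uniformly for $s\le n/4$. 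So given $\eps>0$ we choose $\delta$ so that both $\sup_{|\xi|>1-\delta}|f_s|$ and $\sup_{|\xi|>1-\delta}|f_0|$ are below $\eps$, uniformly in small $s$. It follows that $\sup_\xi|f_s|^2\to\sup_\xi|f_0|^2$, i.e.\ $m_s(u)\to m(u)$. The only case needing comment is $m(u)=0$, which forces $u=0$ since $h_{0,0}=\one$ and the $h_{\xi,0}$ span a dense set; but in that case the uniform convergence still yields $m_s(u)\to0$, so no separate argument is needed.

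Combining the two limits gives \eqref{eq:fixed-distance-recovery}. The step I expect to be the main obstacle is the uniform decay estimate for $\int g_{\xi,s}\,\dd\sigma$ near the boundary of the ball, uniformly in $s$, which I would settle by the explicit Funk–Hecke/beta-function computation of $\int(1-\xi\cdot\omega)^{-a}\,\dd\sigma$ with $a=\frac{n+2s}{2}<n$.
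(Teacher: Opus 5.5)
Your proposal is correct and uses the same reduction as the paper: by \eqref{eq:ms-ds} and Lemma \ref{lem:distance-formula} it suffices to show $Q_s(u)\to\norm{u}_2^2$ and $m_s(u)\to m(u)$. You carry out both halves differently.

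For the form, the paper writes $Q_s(u)=\norm{u}_{p_s}^2+\cD_s(u)$ and quotes the order-differentiation limit \eqref{eq:fixed-numerator-recovery}. Your spectral bound is self-contained and gives an explicit $O(s)$ rate: the integrand $\psi(\ell+\frac n2+t)-\psi(\frac n2+t)$ is decreasing in $t$, so $\log r_{s,\ell}\le 4s\Lambda_\ell$ for $s\le\frac n4$.

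For the correlations, the paper needs no interior/boundary split. The elementary inequality $|x^{1+\frac{2s}{n}}-x|\le\frac{2s}{n}C(1+x^2)$ for $x\ge0$, together with $\norm{h_{\xi,0}}_2=1$, gives $\norm{g_{\xi,s}-h_{\xi,0}}_1\le C_ns$ uniformly in $\xi\in\mathbb B^{n+1}$. Hence $|m_s(u)^{\frac12}-m(u)^{\frac12}|\le C_ns\norm{u}_\infty$ at once. Put differently, the factor $h_{\xi,0}|\log h_{\xi,0}|$ that you control only for $|\xi|\le1-\delta$ is globally dominated by $1+h_{\xi,0}^2$, whose integral the $L^2$ normalization bounds uniformly. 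So the concentration of the weights as $|\xi|\to1$ never has to be analyzed, and your conformal change of variables is unnecessary.

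Your boundary estimates do close the argument, but the stated rates omit a logarithm. The exponent $\frac n2$ is borderline in $\int(1-\xi\cdot\omega)^{-a}\,d\sigma$, so $\int h_{\xi,0}\,d\sigma\asymp(1-|\xi|)^{\frac n4}\log\frac1{1-|\xi|}$. The uniform-in-$s$ bound for $g_{\xi,s}$ is $C(1-|\xi|)^{\frac{n-2s}{4}}\bigl(1+\log\frac1{1-|\xi|}\bigr)$, using $\frac{\epsilon^{-s}-1}{s}\le\epsilon^{-s}\log\frac1\epsilon$. Since you only need uniform smallness near the boundary, this is harmless.
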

\begin{proof}
 The explicit formulas give
$g_{\xi,s}=h_{\xi,0}^{1+\frac{2s}{n}}$ and $\|h_{\xi,0}\|_2=1$. For
$0\le s\le\frac n4$,
\[
 |x^{1+\frac{2s}{n}}-x|\le  \frac{2s}{n}C(1+x^2),\qquad x\ge0,
\]
hence, uniformly in $\xi$,
\[
 \|g_{\xi,s}-h_{\xi,0}\|_1
 \le C_n s.
\]
Therefore, for fixed smooth $u$,
\[
 \bigl|m_s(u)^{\frac{1}{2}}-m(u)^{\frac{1}{2}}\bigr|
 \le C_ns\|u\|_\infty\rightarrow0.
\]
Moreover, \eqref{eq:fixed-numerator-recovery} and $p_s\to2$ give
$Q_s(u)=\|u\|_{p_s}^2+\cD_s(u)\to\|u\|_2^2$. Using
\eqref{eq:ms-ds} and Lemma \ref{lem:distance-formula},
\[
 \dist_s(u,\cM_s)^2=Q_s(u)-m_s(u)
 \longrightarrow \|u\|_2^2-m(u)=\dist(u,\cM)^2.
\]
\end{proof}

The numerator and denominator recovery results, together with
Lemma \ref{lem:smooth-recovery-density}, give the required upper bound for the
sharp constants.

\begin{corollary}
\label{cor:sharp-recovery}
\begin{equation}\label{eq:limsup-cs}
 \limsup_{s\rightarrow0^+}c_s\leq c_{\rm LS}.
\end{equation}
\end{corollary}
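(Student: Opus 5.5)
The plan is to test the fractional quotient against smooth competitors and pass to the limit using the two recovery results already proved. Fix $\varepsilon>0$. By \eqref{eq:smooth-quotient-density} in Lemma \ref{lem:smooth-recovery-density}, there is $v\in C^\infty(\Sn)\setminus\cM$ with
\[
 \frac{\cD(v)}{\dist(v,\cM)^2}\le c_{\rm LS}+\varepsilon .
\]
Since $v\notin\cM$ and $\cM$ is closed in $L^2$ away from $0$ (Lemma \ref{lem:distance-formula} shows that the supremum defining $m(v)$ is attained when positive), we have $\dist(v,\cM)>0$.

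Next we check that $v$ is an admissible competitor for $c_s$, that is, $v\in\mathcal X_s\setminus\cM_s$ for all small $s$. Smoothness gives $v\in\mathcal X_s$. Lemma \ref{lem:fixed-distance-recovery} gives
\[
 \dist_s(v,\cM_s)^2\to\dist(v,\cM)^2>0,
\]
so $\dist_s(v,\cM_s)>0$ for $s$ small, and in particular $v\notin\cM_s$. Therefore, by the definition of $c_s$,
\[
 c_s\le \frac{1}{2s}\,\frac{\cD_s(v)}{\dist_s(v,\cM_s)^2}.
\]
By \eqref{eq:fixed-numerator-recovery}, the numerator $\frac{1}{2s}\cD_s(v)$ tends to $\cD(v)$. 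The denominator tends to $\dist(v,\cM)^2\neq0$. Hence
\[
 \limsup_{s\to0^+}c_s\le \frac{\cD(v)}{\dist(v,\cM)^2}\le c_{\rm LS}+\varepsilon ,
\]
and letting $\varepsilon\to0$ gives \eqref{eq:limsup-cs}.

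The only delicate point is the nondegeneracy of the limiting denominator, i.e.\ ensuring $\dist(v,\cM)>0$ so that the quotient passes to the limit. This holds because the smooth approximant was chosen outside $\cM$ and $\dist(v,\cM)=0$ would force $v\in\cM$: then $m(v)=\norm{v}_2^2>0$, the supremum is attained, and by equality in Cauchy–Schwarz $v$ is a multiple of some $h\in\cM_{0,1}$. Everything else is a direct combination of the earlier lemmas.
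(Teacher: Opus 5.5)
Your proposal is correct and follows the paper's proof essentially step for step. You test $c_s$ against a fixed smooth $v\notin\cM$, pass to the limit with the numerator recovery \eqref{eq:fixed-numerator-recovery} and the distance recovery \eqref{eq:fixed-distance-recovery}, and conclude with \eqref{eq:smooth-quotient-density}. Your $\varepsilon$-selection of $v$ replaces the paper's final infimum over $v$, and your justification that $\dist(v,\cM)>0$ is the same Cauchy--Schwarz argument the paper gives in Appendix A.
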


\begin{proof}
Fix $v\in C^\infty(\mathbb S^n)\setminus\cM$,
\eqref{eq:fixed-distance-recovery}
implies that $\dist_s(v,\cM_s)>0$ for all sufficiently small $s$.
The definition of $c_s$ and \eqref{eq:fixed-numerator-recovery}  therefore give
\[
 \limsup_{s\rightarrow0^+}c_s
\le
 \lim_{s\rightarrow0^+}
\frac{\mathcal D_s (v)}
{2s\dist_s(v,\cM_s)^2}
=
\frac{\mathcal D(v)}
{\operatorname{dist}(v,\cM)^2}.
\]
Taking the infimum over
$v\in C^\infty(\mathbb S^n)\setminus\cM$
and using \eqref{eq:smooth-quotient-density} proves the claim.
\end{proof}

\section{The strict two-bubble bound}\label{sec:two-bubble}
We establish a strict gap below this threshold by computing the interaction of two
conformal extremals at widely separated scales. In the Euclidean case,
separation of conformal scales becomes ordinary dilation. We use the unitary Fourier transform
\[
 \widehat f(\xi)
 :=(2\pi)^{-\frac{n}{2}}
 \int_{\Rn}e^{-ix\cdot\xi}f(x)\,\dd x.
\]
Under the unitary stereographic transform, \eqref{eq:D-intro} becomes
\begin{equation}\label{eq:Euclidean-D}
 \cD_{\Rn}(f)
 =\cQ_{\Rn}(f)
 -\frac{1}{n}\int_{\Rn}f^2
 \log\frac{f^2}{\norm{f}_2^2}\,\dd x
 -\kappa_n\norm{f}_2^2,
\end{equation}
where
\[
 \cQ_{\Rn}(f)
 =\int_{\Rn}\log|\xi|\,|\widehat f(\xi)|^2\,\dd\xi,
 \qquad
 \kappa_n
 :=\log2+\psi\!\left(\frac{n}{2}\right)
 -\frac{1}{n}\log\!\left(
 \frac{\Gamma(n)}{\pi^{\frac{n}{2}}\Gamma(\frac{n}{2})}
 \right).
\]
The transform preserves the $L^2$ norm and carries $\cM$ onto
\[
 \cM_{\Rn}:=\{cB_{a,\mu}:c\in\R,\ a\in\Rn,\ \mu>0\},
 \qquad
 B_{a,\mu}(x):=\mu^{\frac{n}{2}}B(\mu(x-a)),
\]
where $B$ is as in Theorem \ref{thm:two-bubble-intro}.
The positive $L^2$-normalized optimizers are the functions $B_{a,\mu}$.
Every such optimizer $G$ satisfies
\begin{equation}\label{eq:bubble-EL-Euclidean}
 LG=\frac{1}{n}G\log G^2+\kappa_nG,
\end{equation}
where $L$ is the Fourier multiplier with symbol $\log|\xi|$.

\begin{lemma}\label{lem:exact-interaction}
Let $G,H$ be positive normalized Euclidean optimizers. Set
$ \rho :=\ip{G}{H}$,
then
\begin{equation}\label{eq:exact-interaction}
 n\cD_{\Rn}(G+H)
 =(2+2\rho)\log (2+2\rho)-2\int_{\Rn}(G+H)^2
 h\!\left(\frac{G}{G+H}\right)\dd x,
\end{equation}
where
\[
 h(t)=-t\log t-(1-t)\log(1-t),\qquad0\leq t\leq1.
\]
\end{lemma}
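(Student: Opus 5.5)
The identity follows by expanding $\cD_{\Rn}(G+H)$ from \eqref{eq:Euclidean-D}, using bilinearity of $\cQ_{\Rn}$ and the Euler--Lagrange equation \eqref{eq:bubble-EL-Euclidean} for $G$ and $H$ separately. Set $f=G+H$. Since $\norm{G}_2=\norm{H}_2=1$, we have $\norm{f}_2^2=2+2\rho$. The operator $L$ is self-adjoint, so
\[
 \cQ_{\Rn}(f)=\ip{f}{Lf}=\ip{G+H}{LG}+\ip{G+H}{LH}.
\]
Substituting $LG=\frac1n G\log G^2+\kappa_nG$ and $LH=\frac1n H\log H^2+\kappa_nH$ gives
\[
 \cQ_{\Rn}(f)=\frac1n\int_{\Rn}(G+H)\bigl(G\log G^2+H\log H^2\bigr)\dd x+\kappa_n(2+2\rho).
\]
The term $\kappa_n(2+2\rho)$ cancels exactly against $-\kappa_n\norm{f}_2^2$. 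This is the only place $\kappa_n$ enters, which explains why the final formula is constant-free.

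Next I treat the entropy term. I write
\[
 \int f^2\log\frac{f^2}{\norm f_2^2}
 =\int f^2\log f^2-(2+2\rho)\log(2+2\rho).
\]
This already produces the first term $(2+2\rho)\log(2+2\rho)$ after multiplying $\cD_{\Rn}$ by $n$. It remains to show
\[
 \int(G+H)\bigl(G\log G^2+H\log H^2\bigr)-\int(G+H)^2\log(G+H)^2=-2\int(G+H)^2h\!\left(\tfrac{G}{G+H}\right).
\]
Set $t=G/(G+H)$, so that $G=tf$ and $H=(1-t)f$ pointwise, where $f>0$. Then
\[
 G\log G^2+H\log H^2=f\bigl(t\log(t^2f^2)+(1-t)\log((1-t)^2f^2)\bigr)=f\log f^2+2f\bigl(t\log t+(1-t)\log(1-t)\bigr)=f\log f^2-2fh(t).
\]
Multiplying by $f$ and integrating, the $f^2\log f^2$ terms cancel, leaving $-2\int f^2h(t)$. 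This is exactly the claimed identity.

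The only genuine obstacle is justifying the manipulations analytically. First, $G,H$ must lie in the form domain, so that $\ip{f}{Lf}$ splits bilinearly and the Euler--Lagrange equation can be tested against $G$ and $H$; this holds because $G,H$ are smooth with $|x|^{-n}$ decay, so $\widehat G,\widehat H$ decay exponentially and $\log|\xi|$ is locally integrable. Second, every separated integral must converge absolutely. The terms $G^2\log G^2$ and $GH\log G^2$ decay like $|x|^{-2n}\log|x|$, hence are integrable, and $0\le h\le\log2$ makes the right-hand side finite. With these checks the pointwise algebra is legitimate, and in particular there is no issue at points where $t\in\{0,1\}$, since $G,H>0$ everywhere.
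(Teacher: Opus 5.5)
Your proposal is correct and follows essentially the same route as the paper. You expand $n\cD_{\Rn}(G+H)$, substitute the Euler--Lagrange equation \eqref{eq:bubble-EL-Euclidean} for each bubble so that the $\kappa_n$ terms cancel (the paper phrases this cancellation via $\cD_{\Rn}(G)=\cD_{\Rn}(H)=0$), and then use the pointwise identity $(a+b)^2\log(a+b)^2-a^2\log a^2-b^2\log b^2-ab(\log a^2+\log b^2)=2(a+b)^2h(\frac{a}{a+b})$, which is exactly your computation with $t=G/(G+H)$. Your added integrability checks go beyond the paper, with one imprecision: $\widehat G$ is a multiple of the Bessel function $K_0(|\xi|)$. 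Its exponential decay comes from analyticity of $G$ in a strip, not from smoothness plus $|x|^{-n}$ decay, and it has a logarithmic singularity at $\xi=0$; this is harmless, since $|\log|\xi||^3$ is locally integrable.
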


\begin{proof}
The definition \eqref{eq:Euclidean-D} of the deficit yields
\[
n\mathcal D_{\mathbb R^n}(G+H)=n\mathcal Q_{\mathbb R^n}(G+H)-
\int_{\mathbb R^n} (G+H)^2\log (G+H)^2\dd x+
(2+2\rho)\log (2+2\rho)-n\kappa_n (2+2\rho).
\]
Apply  \eqref{eq:bubble-EL-Euclidean} to each bubble and use the symmetry of $L$ to combine the mixed terms. Since
 $\mathcal{D}_{\mathbb R^n}(G)=\mathcal{D}_{\mathbb R^n}(H)=0$, the terms involving   $\kappa_n$ cancel,  and we obtain
\[
\begin{aligned}
n\mathcal D_{\mathbb R^n}(G+H)
&=\int_{\mathbb R^n} G^2\log G^2\dd x+\int_{\mathbb R^n} H^2\log H^2\dd x
+\int_{\mathbb R^n}
GH(\log G^2+\log H^2)\,dx\\
&\quad
-\int_{\mathbb R^n} (G+H)^2\log (G+H)^2\dd x+(2+2\rho)\log (2+2\rho).
\end{aligned}
\]
For $a,b>0$,
\[
 (a+b)^2\log(a+b)^2-a^2\log a^2-b^2\log b^2
 -ab(\log a^2+\log b^2)
 =2(a+b)^2h\!\left(\frac{a}{a+b}\right).
\]
Applying this identity pointwise with $a=G(x)$ and $b=H(x)$, and
integrating over $\mathbb R^n$, gives
\[
n\mathcal D_{\mathbb R^n}(G+H)
=
(2+2\rho)\log (2+2\rho)
-
2\int_{\mathbb R^n}
(G+H)^2
h\!\left(\frac{G}{G+H}\right)\,dx.
\]
\end{proof}
For   $B$ and $B_\lambda=B_{0,\lambda}$, 
$0<\lambda<1$, set
\begin{equation}\label{eq:rho-I}
 \rho_\lambda:=\int_{\Rn}BB_\lambda\,\dd x,
 \qquad
 I_\lambda:=2\int_{\Rn}(B+B_\lambda)^2
 h\!\left(\frac{B}{B+B_\lambda}\right)\dd x.
\end{equation}
 $\rho_\lambda$ controls the normalization term in
\eqref{eq:exact-interaction}, and $I_\lambda$ determines the leading   interaction correction.

\begin{lemma}\label{lem:radial-asymptotics}
As $\lambda\rightarrow0 ^+$,
\begin{align}
  \rho_\lambda
 &=b_n^2\abs{\mathbb S^{n-1}}\lambda^{\frac{n}{2}}\log\frac{1}{\lambda}
 +O_n(\lambda^{\frac{n}{2}}),\label{eq:rho-asymp}\\
 I_\lambda
 &=\frac n2b_n^2\abs{\mathbb S^{n-1}}
 \lambda^{\frac{n}{2}}\log^2(\frac{1}{\lambda})
 +O_n(\lambda^{\frac{n}{2}}\log\frac{1}{\lambda}).\label{eq:I-asymp}
\end{align}
\end{lemma}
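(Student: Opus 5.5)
The plan is to reduce both integrals to explicit radial integrals over the intermediate annulus $1\lesssim |x|\lesssim \lambda^{-1}$. In that range both bubbles behave like pure powers, and their product is
\[
 BB_\lambda\approx b_n^2\lambda^{\frac n2}|x|^{-n}.
\]
Every logarithm in \eqref{eq:rho-asymp}–\eqref{eq:I-asymp} comes from integrating $|x|^{-n}$ (times a power of $\log|x|$) across this annulus. Everything outside it should contribute only $O_n(\lambda^{\frac n2})$ or $O_n(\lambda^{\frac n2}\log\frac1\lambda)$.

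\textbf{The overlap $\rho_\lambda$.} We write
\[
 \rho_\lambda=b_n^2\lambda^{\frac n2}\int_{\Rn}(1+|x|^2)^{-\frac n2}(1+\lambda^2|x|^2)^{-\frac n2}\,\dd x
\]
and split at $|x|=\lambda^{-1}$.

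1. On $|x|<\lambda^{-1}$, replace the second factor by $1$. The error is $O(\lambda^2|x|^2)$, and integrating it against $(1+|x|^2)^{-\frac n2}$ over the ball gives $O(1)$.
2. The main term is $\int_{|x|<\lambda^{-1}}(1+|x|^2)^{-\frac n2}\dd x=\abs{\mathbb S^{n-1}}\log\frac1\lambda+O_n(1)$.
3. On $|x|>\lambda^{-1}$, use $(1+|x|^2)^{-\frac n2}\le|x|^{-n}$ and substitute $y=\lambda x$. This leaves $\int_{|y|>1}|y|^{-n}(1+|y|^2)^{-\frac n2}\dd y=O_n(1)$.

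Together these give \eqref{eq:rho-asymp}.

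\textbf{Rewriting $I_\lambda$.} First we use the pointwise identity
\[
 (a+b)^2h\!\left(\tfrac{a}{a+b}\right)=a(a+b)\log\left(1+\tfrac ba\right)+b(a+b)\log\left(1+\tfrac ab\right).
\]
This turns $I_\lambda$ into a sum of two nonnegative integrals with explicit integrands. Set $\varepsilon(x):=B_\lambda/B$ and $\Omega:=\{\varepsilon<1\}$; then $\Omega$ is essentially $\{|x|<\lambda^{-1/2}\}$. On $\Omega$:

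1. The first term is $B(B+B_\lambda)\log(1+\varepsilon)\le 2BB_\lambda$.
2. The second term equals $BB_\lambda(1+\varepsilon)\log(1+\varepsilon^{-1})=BB_\lambda\log(B/B_\lambda)+O(BB_\lambda)$.

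The symmetric statements hold on the complement $\Omega^c$. Since $\int BB_\lambda=\rho_\lambda=O(\lambda^{\frac n2}\log\frac1\lambda)$ by the first part, we obtain
\[
 I_\lambda=2\int_{\Rn}BB_\lambda\,\bigl|\log(B/B_\lambda)\bigr|\,\dd x+O_n\!\left(\lambda^{\frac n2}\log\tfrac1\lambda\right).
\]

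\textbf{Evaluating the main term.} On the annulus $1\le|x|\le\lambda^{-1}$ we have
\[
 \log\frac{B}{B_\lambda}=\frac n2\log\frac1\lambda-n\log|x|+O(1),\qquad BB_\lambda=b_n^2\lambda^{\frac n2}|x|^{-n}\bigl(1+O(|x|^{-2}+\lambda^2|x|^2)\bigr).
\]
Put $u=\log|x|$ and $L=\log\frac1\lambda$. The main term becomes
\[
 2b_n^2\abs{\mathbb S^{n-1}}\lambda^{\frac n2}\int_0^L\Bigl|\tfrac{nL}{2}-nu\Bigr|\,\dd u=\frac n2 b_n^2\abs{\mathbb S^{n-1}}\lambda^{\frac n2}L^2 .
\]
The $O(1)$ in the logarithm contributes $O(\lambda^{\frac n2}L)$. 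The $O(|x|^{-2}+\lambda^2|x|^2)$ corrections contribute $O(\lambda^{\frac n2}L)$, since they are weighted by a logarithm of size at most $L$. The regions $|x|<1$ and $|x|>\lambda^{-1}$ are handled as in the first part: $|\log(B/B_\lambda)|=O(L)$ there, and $\int BB_\lambda$ over them is $O(\lambda^{\frac n2})$. The $y=\lambda x$ substitution shows the outer region is symmetric to the inner one. This yields \eqref{eq:I-asymp}.

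The main obstacle is the careful accounting of error terms in the second step. The crossover region near $|x|\sim\lambda^{-1/2}$, where $\varepsilon\approx1$ and the approximation $h(1-\varepsilon)\approx\varepsilon\log\frac1\varepsilon$ fails, must cost only $O(\lambda^{\frac n2}L)$ rather than a full $L^2$ factor. The identity above avoids any Taylor expansion of $h$ and reduces everything to bounds by $BB_\lambda$ times either a constant or the logarithm, which should make this step routine.
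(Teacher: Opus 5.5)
Your proof is correct. For $\rho_\lambda$ it is essentially the paper's argument: a radial split using $BB_\lambda r^{n-1}=b_n^2\lambda^{\frac n2}r^{-1}(1+O(r^{-2}+\lambda^2r^2))$ on $1\le r\le\lambda^{-1}$. For $I_\lambda$, however, you take a genuinely different route.

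The paper's route has three steps.
\begin{enumerate}
\item It applies the weighted inversion $\mathcal I_\lambda$, which swaps $B$ and $B_\lambda$ and preserves the interaction integral, to reduce to $\Omega_\lambda=\{r\le\lambda^{-\frac12}\}$.
\item On $\Omega_\lambda$ it Taylor expands $2(1+t)^2h(\frac{1}{1+t})=2t(1-\log t)+O(t^2(1+\abs{\log t}))$ in $t=B_\lambda/B\in(0,1]$. The remainder is controlled by $\int_{\Omega_\lambda}B_\lambda^2(1+\abs{\log(B_\lambda/B)})\,\dd x$, using $B_\lambda^2\le C_n\lambda^n$ on a set of volume $O(\lambda^{-\frac n2})$.
\item It integrates $1+\frac n2(L-2y)$ over $0\le y\le\frac L2$.
\end{enumerate}

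You instead split $(a+b)^2h(\frac{a}{a+b})$ exactly into two nonnegative logarithmic terms. This gives the pointwise bound
\[
\Bigl|2(B+B_\lambda)^2h\Bigl(\frac{B}{B+B_\lambda}\Bigr)-2BB_\lambda\Bigl|\log\frac{B}{B_\lambda}\Bigr|\Bigr|\le C\,BB_\lambda
\]
on all of $\Rn$. Every error is then $O(\rho_\lambda)$, and the crossover at $r=\lambda^{-\frac12}$ (exactly where $B=B_\lambda$) needs no separate treatment. The symmetry reappears only implicitly, when you integrate $\abs{\frac{nL}{2}-nu}$ over all of $[0,L]$.

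Your route is more elementary: no inversion map and no Taylor remainder. It also makes clear that the ``$1$'' in the paper's $1-\log t$ already belongs to the $O(\lambda^{\frac n2}\log\frac1\lambda)$ error. The paper's route keeps that next-order contribution explicit and halves the domain of integration.

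One point worth stating explicitly in your write-up: $B_\lambda/B$ is increasing in $r$ with range $[\lambda^{\frac n2},\lambda^{-\frac n2})$. Hence $\abs{\log(B/B_\lambda)}\le\frac n2\log\frac1\lambda$ everywhere, which justifies all your ``$O(L)$'' bounds at once, including on $\abs{x}<1$ and $\abs{x}>\lambda^{-1}$.
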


\begin{proof}
We first estimate the overlap  $\rho_\lambda$.
For $r=\abs{x}$,
on $1\leq r\leq\lambda^{-1}$,
\[
 BB_\lambda r^{n-1}=b_n^2\lambda^{\frac{n}{2}}
 \frac{r^{n-1}}{(1+r^2)^{\frac{n}{2}}(1+\lambda^2r^2)^{\frac{n}{2}}}= b_n^2\lambda^{\frac{n}{2}}\frac1r
 \bigl(1+O(r^{-2}+\lambda^2r^2)\bigr).
\]
The ranges $r\leq1$ and $r\geq\lambda^{-1}$ contribute $O(\lambda^{\frac{n}{2}})$, which proves \eqref{eq:rho-asymp}.
For the entropy interaction, observe that
\[
\frac{B_\lambda(r)}{B(r)}
 =\lambda^{\frac{n}{2}}\left(\frac{1+r^2}{1+\lambda^2r^2}\right)^{\frac{n}{2}}
\]
 is increasing and equals one at $r=\lambda^{-\frac{1}{2}}$.
The weighted inversion
\[
 (\mathcal I_\lambda f)(x)
 =\lambda^{-\frac n2}|x|^{-n}
 f\!\left(\frac{x}{\lambda|x|^2}\right)
\]
interchanges $B$ and $B_\lambda$. The interaction integrand is homogeneous
of degree two in the pair of bubbles, so the square of this weight is exactly the Jacobian factor and the interaction integral is preserved. On
$\Omega_\lambda=\{r\leq\lambda^{-\frac12}\}$, the ratio lies in $(0,1]$
and satisfies
 \[
-\log \frac{B_\lambda(r)}{B(r)}=O(\log \frac{1}{\lambda}).
 \]
By direct computation, for $0<t\leq1$,
\[
 2(1+t)^2h\!\left(\frac1{1+t}\right)
 =2t(1-\log t)+O\bigl(t^2(1+\abs{\log t})\bigr).
\]
The integrated remainder is bounded by
\[
 C_n\int_{\Omega_\lambda} B_\lambda^2
 \left(1+\left|\log\frac{B_\lambda}{B}\right|\right)\dd x
 \leq C_n\lambda^{\frac n2}\log\frac1\lambda,
\]
since $B_\lambda^2\leq C_n\lambda^n$ on a region of volume
$O_n(\lambda^{-\frac n2})$. Hence
\begin{equation*}
 I_\lambda=4\int_{\Omega_\lambda}BB_\lambda
 \left(1-\log  \frac{B_\lambda(r)}{B(r)} \right)\,\dd x
 +O_n\left(\lambda^{\frac{n}{2}}\log\frac{1}{\lambda}\right).
\end{equation*}
On $r\leq 1$,
\[
\int_{r\leq 1}BB_\lambda
\left(1-\log  \frac{B_\lambda(r)}{B(r)} \right)\,\dd x
 =O_n(\lambda^{\frac{n}{2}}\log\frac{1}{\lambda}).
\]
On  $1\leq r\leq\lambda^{-\frac{1}{2}}$,
\[
 -\log \frac{B_\lambda(r)}{B(r)} =\frac n2(\log\frac{1}{\lambda}-2\log r)
 +O(r^{-2}+\lambda^2r^2).
\]
Consequently,
\begin{align*}
 I_\lambda
 &=4b_n^2\abs{\mathbb S^{n-1}}\lambda^{\frac n2}
 \int_0^{\frac{1}{2}\log\frac{1}{\lambda}}
 \left[1+\frac n2(\log\frac{1}{\lambda}-2y)\right]\dd y
 +O_n(\lambda^{\frac n2}\log\frac{1}{\lambda})\\
 &=\frac n2b_n^2\abs{\mathbb S^{n-1}}
 \lambda^{\frac n2}\log^2(\frac{1}{\lambda})
 +O_n(\lambda^{\frac n2}\log\frac{1}{\lambda}).
\end{align*}
\end{proof}
We next show
that the squared distance to the full optimizer manifold differs from one
only at the smaller overlap scale.

\begin{lemma}
As $\lambda\rightarrow0^+$,
\begin{equation}\label{eq:distance-asymptotic}
 \dist(B+B_\lambda,\cM_{\Rn})^2
 =1+O_n(\lambda^{\frac{n}{2}}\log\frac{1}{\lambda}).
\end{equation}
\end{lemma}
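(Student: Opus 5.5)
Set $u_\lambda:=B+B_\lambda$ and use the Euclidean version of Lemma \ref{lem:distance-formula}, which holds verbatim because the unitary stereographic transform carries $\cM$ onto $\cM_{\Rn}$ and preserves $L^2$ inner products:
\[
 \dist(u_\lambda,\cM_{\Rn})^2=\norm{u_\lambda}_2^2-\sup_{a,\mu}\ip{u_\lambda}{B_{a,\mu}}^2 .
\]
Here we may drop the sign $\pm$ and the absolute value, since $u_\lambda>0$ and $B_{a,\mu}>0$. We have $\norm{u_\lambda}_2^2=2+2\rho_\lambda$, and by \eqref{eq:rho-asymp} this is $2+O_n(\lambda^{\frac n2}\log\frac1\lambda)$. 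The statement therefore reduces to
\[
 M_\lambda:=\sup_{a,\mu}\bigl(\ip{B}{B_{a,\mu}}+\ip{B_\lambda}{B_{a,\mu}}\bigr)^2=1+O_n\bigl(\lambda^{\frac n2}\log\tfrac1\lambda\bigr).
\]

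The lower bound is immediate: choose $(a,\mu)=(0,1)$, which gives $M_\lambda\ge(1+\rho_\lambda)^2\ge1$.

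For the upper bound I will use conformal invariance. Set $\phi(G,H):=\ip{G}{H}\in(0,1]$ for positive normalized bubbles. This quantity is invariant under the common action of the conformal group and depends only on the hyperbolic distance $d$ between the parameters $(a,\mu)$ in $\mathbb H^{n+1}$; explicitly it is a decreasing function $\phi(d)$ with $\phi(d)\approx C d\,e^{-nd/2}$ as $d\to\infty$, consistent with \eqref{eq:rho-asymp} where $d(B,B_\lambda)=\log\frac1\lambda$. For an arbitrary bubble $G=B_{a,\mu}$ let $d_1=d(G,B)$ and $d_2=d(G,B_\lambda)$. The triangle inequality gives $d_1+d_2\ge L:=\log\frac1\lambda$. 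The task is then to maximize $\phi(d_1)+\phi(d_2)$ subject to this constraint. I expect this to be the main step.

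Take $d_1\le d_2$ without loss of generality, so $d_2\ge L/2$. Then $\phi(d_2)\le C L e^{-nL/4}=C\lambda^{\frac n4}\log\frac1\lambda$, which is not small enough by itself. A second-order argument is needed. Near $d_1=0$ we have $\phi(d_1)\le 1-c\,d_1^2$, since $1-\phi$ is quadratic at the origin (nondegenerate Hessian of the overlap, the local structure behind Theorem B). Globally $\phi(d_1)\le 1-c\min(d_1^2,1)$. In addition, $d_2\ge L-d_1$, so $\phi(d_2)\le C(L-d_1)e^{-n(L-d_1)/2}$. Now split into cases.
\begin{itemize}
\item If $d_1\ge1$, then $\phi(d_1)+\phi(d_2)\le 1-c+C\lambda^{\frac n4}\log\frac1\lambda<1$ for small $\lambda$.
\item If $d_1<1$, then $\phi(d_1)+\phi(d_2)\le 1-cd_1^2+C e^{nd_1/2}\lambda^{\frac n2}\log\frac1\lambda\le 1+C'\lambda^{\frac n2}\log\frac1\lambda$.
\end{itemize}
Squaring gives $M_\lambda\le 1+O_n(\lambda^{\frac n2}\log\frac1\lambda)$, which completes the argument. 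The remaining facts to check are the explicit hyperbolic-distance formula for $\ip{B}{B_{a,\mu}}$ and the quadratic bound near zero; both can be verified by reducing via conformal invariance to the radial case $a=0$ and running the computation of Lemma \ref{lem:radial-asymptotics}, with the translation parameter absorbed by a Möbius map.
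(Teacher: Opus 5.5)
Your proof is correct. Its final dichotomy is essentially the paper's, but you handle the translation parameter by a different route.

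The paper first applies Riesz rearrangement. Since $B+B_\lambda$ is radially symmetric decreasing, $\ip{B+B_\lambda}{B_{a,\mu}}\le\ip{B+B_\lambda}{B_{0,\mu}}$. This gives $m(B+B_\lambda)^{1/2}=\sup_{\mu>0}[F(\mu)+F(\lambda/\mu)]$ with $F(t)=\ip{B}{B_t}$. The paper then argues in the single variable $x=\log\mu$: either $x$ lies within a fixed distance $R$ of $0$ or of $-L$, or else both terms are below $\frac12$ and the test $\mu=1$ beats them. In the surviving region one term is at most $1$. The other is $O_n(\lambda^{n/2}\log\frac1\lambda)$ by the envelope $F(t)\le C_n\min\{t,t^{-1}\}^{n/2}(1+\abs{\log t})$.

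You instead keep the full parameter space. You write the correlation as a function $\phi$ of hyperbolic distance in $\mathbb H^{n+1}$, using unitarity of the conformal action and two-point homogeneity. The triangle inequality $d_1+d_2\ge L$ then replaces rearrangement. Since $\phi(d)=F(e^{-d})$, the analytic input is exactly the same envelope bound on $F$ in both proofs; only the geometric reduction differs.

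Rearrangement is a one-line reduction, but it uses that the two bubbles are concentric and radially decreasing. Your argument is more intrinsic and does not depend on that configuration; for instance, it would adapt to sums of several non-concentric bubbles. The cost is that you must verify the equivariant identification of normalized bubbles with $\mathbb H^{n+1}$, which you correctly flag.

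The quadratic bound $\phi(d)\le 1-c\,d^2$ is never actually used. In your case $d_1<1$ you discard the $-c\,d_1^2$ term. In the case $d_1\ge1$ you only need $\sup_{d\ge1}\phi(d)<1$, which follows from strict Cauchy--Schwarz, continuity, and $\phi(d)\to0$. So the ``second-order argument'' can be dropped, exactly as the paper does with its threshold $R$.
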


\begin{proof}
Set $F(t)=\langle B,B_t\rangle$. Scaling gives $F(t)=F(t^{-1})$, while
$0<F(t)\le1$ with equality only at $t=1$. A radial split over
$(0,1)$, $(1,t^{-1})$, and $(t^{-1},\infty)$ yields, for $0<t\le1$,
\[
 F(t)\le C_nt^{\frac n2}\left(1+\log\frac1t\right),
\]
and hence
\begin{equation}\label{eq:F-estimate}
 F(t)\le C_n\min\{t,t^{-1}\}^{\frac n2}(1+|\log t|),\qquad t>0.
\end{equation}
By Riesz rearrangement, scaling, and the Hilbert distance formula,
\[
 m(B+B_\lambda)^{\frac{1}{2}}
 =\sup_{\mu>0}\left[F(\mu)+F\!\left(\frac\lambda\mu\right)\right],
\quad
 \dist(B+B_\lambda,\cM_{\Rn})^2
 =2+2\rho_\lambda-m(B+B_\lambda).
\]
Testing $\mu=1$ gives the lower bound $1+\rho_\lambda$ for the
supremum. For the upper bound, write $x=\log\mu$ and
$L=\log\lambda^{-1}$. Choose $R$ so that $F(e^x)<\frac12$ for
$|x|\ge R$. The maximizer must  lie within distance $R$ of either
$0$ or $-L$; otherwise the sum is $<1$,  the test $\mu=1$ is
$>1$. In either of the two remaining regions, \eqref{eq:F-estimate}
gives
\[
 m(B+B_\lambda)^{\frac{1}{2}}
 \le 1+C_n\lambda^{\frac n2}\log\frac1\lambda.
\]
Together with \eqref{eq:rho-asymp} and
$\|B+B_\lambda\|_2^2=2+2\rho_\lambda$, this proves
\eqref{eq:distance-asymptotic}.
\end{proof}
The preceding numerator and distance asymptotics yield the strict two-bubble
inequality.

\begin{proof}[Proof of Theorem \ref{thm:two-bubble-intro}]
By Lemma \ref{lem:exact-interaction},
\[
n\mathcal D_{\mathbb R^n}(B+B_\lambda)
=
(2+2\rho_\lambda)\log (2+2\rho_\lambda)-I_\lambda.
\]
Since $\rho_\lambda\rightarrow0$,
\eqref{eq:rho-asymp} gives
\[
(2+2\rho_\lambda)\log (2+2\rho_\lambda)
=2\log2+O_n(\rho_\lambda)
=2\log2+O_n\left(\lambda^{\frac{n}{2}}\log\frac{1}{\lambda}\right).
\]
By \eqref{eq:I-asymp}, we therefore obtain
\begin{equation}\label{eq:numerator-expansion}
 \cD_{\Rn}(B+B_\lambda)
 =\frac{2\log2}{n}
 -\frac{\Gamma(n)}{\Gamma(\frac{n}{2})^2}
 \lambda^{\frac{n}{2}}\log^2(\frac{1}{\lambda})
 +O_n(\lambda^{\frac{n}{2}}\log\frac{1}{\lambda}),
\end{equation}
where we used
$\frac12b_n^2|\mathbb S^{n-1}|=
\frac{\Gamma(n)}{\Gamma(\frac{n}{2})^2}$.
Dividing \eqref{eq:numerator-expansion} by
\eqref{eq:distance-asymptotic} gives
\eqref{eq:two-bubble-main-intro}.  Since
$\lambda^{\frac{n}{2}}\log \frac{1}{\lambda}=o(\lambda^{\frac{n}{2}}\log^2(\frac{1}{\lambda}))$, the negative leading correction dominates the remainder.
Thus, for all sufficiently small $\lambda$,
\[
c_{\mathrm{LS}}
\le
\frac{\mathcal D_{\mathbb R^n}(B+B_\lambda)}
{\operatorname{dist}(B+B_\lambda,\cM_{\Rn})^2}
<
\frac{2\log2}{n},
\]
 which proves
\eqref{eq:cLS-c2b-intro}.
\end{proof}

\section{Uniform non-concentration of minimizers for $c_s$}
\label{sec:localization}
Throughout Sections \ref{sec:localization}-\ref{sec:attainment-proof}, we assume
$n\geq2$.   By a concentration-compactness
argument on the conformal cylinder, we establish uniform non-concentration of the
critical measures $\abs{u_s}^{p_s}\,d\sigma$. The strict two-bubble gap excludes dichotomy,
and the Hersch condition excludes drift of the whole mass to a pole.

Let $u_s$ be a minimizer for $c_s$, normalized by
$\norm{u_s}_{p_s}=1$.
Its minimizing identity is
\[
Q_s(u_s)-1
=
2sc_s\bigl(Q_s(u_s)-m_s(u_s)\bigr).
\]
Since the recovery bound gives $c_s=O(1)$ and
$0\leq m_s(u_s)\leq1$, we obtain
\[
1\leq Q_s(u_s)
=
\frac{1-2sc_sm_s(u_s)}{1-2sc_s}
=
1+O(s).
\]
Here $1-2sc_s>0$ by Theorem C. Moreover, by
 Corollary \ref{cor:sharp-recovery} and
\eqref{eq:cLS-c2b-intro}, there exist $\eta_0>0$ and $s_0>0$ such
that
\begin{equation}\label{eq:strict-moving-gap-intro}
 c_s
 \leq
 \frac{2-2^{\frac{n-2s}{n}}}{2s}-\eta_0,
 \qquad 0<s<s_0.
\end{equation}
The threshold on the right converges to
$\frac{2\log2}{n}$ as $s\to0^+$.
Since the stability quotient is conformally invariant, Hersch
lemma \cite{Hersch1970}, in the higher-dimensional form proved in
\cite{Laugesen2021}, allows us to choose
 a  representative satisfying
\begin{equation}\label{eq:Hersch-intro}
 \int_{\Sn}\omega\abs{u_s}^{p_s}\,d\sigma=0.
\end{equation}
\medskip

For the remainder of Sections 4-6, let
$s_k\to0^+$ and use the abbreviations
\[
 p_k:=p_{s_k},\qquad P_k:=P_{s_k},\qquad Q_k:=Q_{s_k},\qquad m_k:=m_{s_k}, \qquad
 \alpha_k:=\frac{2}{p_k}=1-\frac{2s_k}{n}.
\]
The main stability constant and optimizer manifold retain the notation
$c_{s_k}$ and $\cM_{s_k}$. After passing to subsequences when necessary, all
limits are taken along the retained subsequence without relabeling.
\medskip

The minimizing sequences below are normalized by
\[
\norm{u_k}_{p_k}=1
\]
and, when needed, by \eqref{eq:Hersch-intro}. Constants may depend on
the fixed dimension and on fixed cutoff data, but not on $k$. After
decreasing $s_0$ if necessary, we assume throughout the localization
argument that
\[
 0<s<s_0<\min\left\{1,\frac n2\right\}.
\]
The required $o(s)$ localization begins with the Aronszajn-Smith representation
for the probability measure $d\sigma$,
\begin{equation}\label{eq:AS-representation}
 Q_s(w)=\norm{w}_2^2+B_{n,s}
 \iint_{\Sn\times\Sn}
 \frac{\abs{w(\omega)-w(\eta)}^2}{\abs{\omega-\eta}^{n+2s}}
 \dd\sigma(\omega)\dd\sigma(\eta),
\end{equation}
where
\begin{equation}\label{eq:Bns}
 B_{n,s}=\frac{2^{n+2s}s\Gamma(\frac{n}{2}+1)\Gamma(\frac{n}{2}-s)}
 {\Gamma(n+1)\Gamma(1-s)}\leq C_{n,s_0}s.
\end{equation}
\begin{lemma}\label{lem:IMS}
Let $u$   belong to $\mathcal X_s$ and
 $\chi_1,\ldots,\chi_N$ be Lipschitz functions with $\sum\limits_\nu\chi_\nu^2=1$. Then
\begin{equation}\label{eq:IMS}
 \sum_\nu Q_s(\chi_\nu u)-Q_s(u)
 =B_{n,s}\iint_{\mathbb{S}^{n}\times {\mathbb{S}^{n}}} u(\omega)u(\eta)
 \frac{\Gamma_\chi(\omega,\eta)}{\abs{\omega-\eta}^{n+2s}}
 \dd\sigma(\omega)\dd\sigma(\eta),
\end{equation}
where
\[
 \Gamma_\chi(\omega,\eta)=
 \sum_\nu\abs{\chi_\nu(\omega)-\chi_\nu(\eta)}^2.
\]
\end{lemma}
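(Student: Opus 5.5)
The plan is to insert the Aronszajn--Smith representation \eqref{eq:AS-representation} for each $w=\chi_\nu u$ and for $w=u$, and subtract. The $L^2$ parts cancel exactly, because $\sum_\nu\chi_\nu^2=1$ gives $\sum_\nu\norm{\chi_\nu u}_2^2=\norm{u}_2^2$. What remains is $B_{n,s}$ times the double integral of
\[
 \sum_\nu\abs{\chi_\nu(\omega)u(\omega)-\chi_\nu(\eta)u(\eta)}^2-\abs{u(\omega)-u(\eta)}^2
\]
against the kernel $\abs{\omega-\eta}^{-n-2s}$.

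The pointwise algebra is the classical IMS identity. Expanding the square and using $\sum_\nu\chi_\nu^2=1$ at both $\omega$ and $\eta$, the diagonal terms give $u(\omega)^2+u(\eta)^2$, which cancel against those in $\abs{u(\omega)-u(\eta)}^2$. The cross terms give
\[
 -2u(\omega)u(\eta)\sum_\nu\chi_\nu(\omega)\chi_\nu(\eta)+2u(\omega)u(\eta).
\]
Since $\sum_\nu\abs{\chi_\nu(\omega)-\chi_\nu(\eta)}^2=2-2\sum_\nu\chi_\nu(\omega)\chi_\nu(\eta)$, the bracket equals $u(\omega)u(\eta)\Gamma_\chi(\omega,\eta)$ exactly. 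Integrating this identity yields \eqref{eq:IMS}.

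The only real issue is justifying the subtraction, since each Aronszajn--Smith double integral is finite only because $u\in\mathcal X_s$. First, $\chi_\nu u\in\mathcal X_s$ for Lipschitz $\chi_\nu$. Indeed,
\[
 \abs{\chi u(\omega)-\chi u(\eta)}^2\le 2\norm{\chi}_\infty^2\abs{u(\omega)-u(\eta)}^2+2\,\mathrm{Lip}(\chi)^2\abs{\omega-\eta}^2u(\eta)^2,
\]
and $\int\abs{\omega-\eta}^{2-n-2s}\dd\sigma(\omega)$ is bounded uniformly in $\eta$ because $s<1$. Second, the right-hand integrand is absolutely integrable: $\Gamma_\chi(\omega,\eta)\le C\abs{\omega-\eta}^2$ and $\abs{u(\omega)u(\eta)}\le\frac12\bigl(u(\omega)^2+u(\eta)^2\bigr)$, so the same kernel bound together with $u\in L^2$ suffices. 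Hence all integrals converge absolutely, linearity of the integral applies, and the pointwise identity integrates to \eqref{eq:IMS}. I expect no real obstacle beyond this integrability bookkeeping, which is where $s<1$ is needed.
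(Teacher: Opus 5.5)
Your proposal is correct and follows the paper's proof essentially line by line. You insert the Aronszajn--Smith representation \eqref{eq:AS-representation}, cancel the $L^2$ terms via $\sum_\nu\chi_\nu^2=1$, and integrate the pointwise identity $\sum_\nu|\chi_\nu(\omega)u(\omega)-\chi_\nu(\eta)u(\eta)|^2-|u(\omega)-u(\eta)|^2=u(\omega)u(\eta)\Gamma_\chi(\omega,\eta)$. Your integrability bookkeeping uses $s<1$ to show that $\chi_\nu u\in\mathcal X_s$ and that the error integrand is absolutely integrable; this only makes explicit what the paper asserts in one line.
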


\begin{proof}
Since $0<s<1$, Lipschitz multiplication preserves the form
domain of $Q_s$. Moreover,
$\Gamma_\chi(\omega,\eta)\le C|\omega-\eta|^2$,
so the error integral below is absolutely convergent.
The quadratic partition gives
$\sum_\nu\|\chi_\nu u\|_2^2=\|u\|_2^2$.
Thus the $L^2$ terms in \eqref{eq:AS-representation} cancel.
For the nonlocal terms, we have
\[
\begin{aligned}
&\sum_\nu
|\chi_\nu(\omega)u(\omega)-\chi_\nu(\eta)u(\eta)|^2
-|u(\omega)-u(\eta)|^2\\
&\qquad=
2u(\omega)u(\eta)
\left(
1-\sum_\nu\chi_\nu(\omega)\chi_\nu(\eta)
\right)=
u(\omega)u(\eta)\Gamma_\chi(\omega,\eta).
\end{aligned}
\]
Integrating this identity in \eqref{eq:AS-representation}
proves the lemma.
\end{proof}

The localization estimates proved in Appendix \ref{app:scale-localization} yield
the following concentration-compactness alternative at the $o(s_k)$ scale required here.

\begin{proposition}\label{prop:scale-cc}
Let $s_k\rightarrow0^+$, and
$\norm{u_k}_{p_k}=1$ and $Q_k(u_k)\leq1+ Cs_k$. Then there exists an infinite index set
$K\subset\mathbb N$ such that, in a fixed conformal cylinder,  exactly one of the following alternatives holds as $k\to\infty$ with $k\in K$:
\begin{enumerate}[label=\textup{(S\arabic*)},leftmargin=2.8em]
 \item there are translations $T_k$,  such that the measures $\abs{v_k(t+T_k,\theta)}^{p_k}\dd\mu$ are tight;
 \item there are finitely many localized pieces $\{u_k^j\}_{j\in J}$,
 at least two with positive limiting $L^{p_k}$-mass, such that
 \begin{align*}
  \sum_j\norm{u_k^j}_{p_{k}}^{p_{k}}&=1-o(s_k),\quad
  Q_k(u_k)=\sum_jQ_k(u_k^j)+o(s_k),\quad
  m_k(u_k)=\max_jm_k(u_k^j)+o(1),
 \end{align*}
 and the bulk supports are separated by cylinder gaps tending to infinity.
\end{enumerate}
 If $u_k$ are minimizers for $c_{s_k}$ and \eqref{eq:strict-moving-gap-intro} holds, then \textup{(S2)} is impossible.
\end{proposition}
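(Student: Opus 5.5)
The proof has two parts: the dichotomy itself, and the exclusion of (S2) for minimizers under \eqref{eq:strict-moving-gap-intro}.

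\textbf{The dichotomy.} I would transport each $u_k$ to the conformal cylinder $\R\times\mathbb S^{n-1}$ via the conformal map centred at a fixed pair of antipodal points, obtaining $v_k(t,\theta)$ with $\norm{v_k}_{L^{p_k}(d\mu)}=1$. I then run Lions' concentration function in the variable $t$ only, namely $\mathcal C_k(R):=\sup_T\int_{|t-T|<R}|v_k|^{p_k}\dd\mu$. Let $\beta:=\lim_{R\to\infty}\lim_k\mathcal C_k(R)$ along a subsequence.

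If $\beta=1$, choosing $T_k$ as near-maximizers gives (S1).

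If $\beta<1$, I split. I use cylinder cutoffs $\chi_1,\chi_2$ with $\chi_1^2+\chi_2^2=1$ that vary over a length $L_k\to\infty$, with $L_k$ chosen so slowly varying that the IMS error in Lemma \ref{lem:IMS} is $o(s_k)$. On the cylinder, $\Gamma_\chi\lesssim\min\{L_k^{-2}|t-t'|^2,1\}$. The factor $B_{n,s}=O(s)$ from \eqref{eq:Bns} then makes the error $s_k\cdot o(1)$, provided the kernel integral against $|u(\omega)u(\eta)|$ is controlled. That control comes from $Q_k(u_k)\le1+Cs_k$ together with a dyadic pigeonhole in $t$: there are $\gtrsim \log$ many disjoint annuli, and one of them carries only $o(1)$ of the "gradient" energy $\frac{1}{s}\bigl(Q_s-\norm{\cdot}_2^2\bigr)$ and of the mass. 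This pigeonhole is the content of the appendix estimates, and it is the step I expect to be the main obstacle. The difficulty is the long-range tail of the kernel $|\omega-\eta|^{-n-2s}$, whose decay rate degenerates as $s\to0$. This is why the gaps must be allowed to be as large as $e^{c/s}$, consistent with the remark in the introduction.

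Iterating the splitting finitely many times (each piece carries mass at least $\beta$-type amounts) produces pieces $u_k^j$ with the following properties. Mass additivity follows because the transition region carries $o(s_k)$ of the mass. The $Q_k$ additivity is IMS. For the $m_k$ relation, each $g_{\xi,s}$ has cylinder profile $\cosh(t-t_0)^{-(n+2s)/2}$, which is concentrated, so its correlation with $u_k$ is captured by one piece up to $o(1)$.

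\textbf{Excluding (S2).} Write $a_j:=\lim_k\norm{u_k^j}_{p_k}^{p_k}$, with $\sum_j a_j=1$ and at least two $a_j>0$. By \eqref{Beckner1}, $Q_k(u_k^j)\ge a_j^{2/p_k}+2s_kc_{s_k}\dist_{s_k}(u_k^j,\cM_{s_k})^2$ (minimality of $c_{s_k}$ applied to each piece), or at least $Q_k(u_k^j)\ge\norm{u_k^j}_{p_k}^2$. Summing gives
\[
Q_k(u_k)-1\ \ge\ \sum_j a_j^{\alpha_k}-1+o(s_k),
\]
and by strict concavity $\sum_j a_j^{\alpha_k}-1\ge 2^{1-\alpha_k}\cdot(\text{two-piece bound})$. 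The worst case is two equal masses, which yields at least $2^{2s_k/n}-1\approx\frac{2s_k\log2}{n}$ times a factor $\ge1$.

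For the denominator, use $\dist_{s_k}(u_k,\cM_{s_k})^2=Q_k(u_k)-m_k(u_k)$ together with $m_k(u_k)=\max_j m_k(u_k^j)+o(1)\le\max_j a_j^{\alpha_k}+o(1)$ (Hölder with $\norm{g}_{p'}=1$). This gives $\dist^2\le 1-\max_ja_j+o(1)$. The quotient is then at least
\[
\frac{\sum_j a_j^{\alpha_k}-1}{2s_k(1-\max_j a_j)},
\]
which as $k\to\infty$ tends to $\frac{-\frac1n\sum_j a_j\log a_j}{1-\max_ja_j}\ge\frac{2\log 2}{n}$. The last inequality is elementary, by comparison with equal halves. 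To respect the moving threshold I would do this comparison at fixed $s_k$, using $\frac{\sum a_j^{\alpha}-1}{1-\max a_j}\ge 2^{1-\alpha}\cdot\ldots\ge 2-2^{\alpha}$, which is König's two-bubble computation. This contradicts \eqref{eq:strict-moving-gap-intro} up to the $o(1)$ errors, which are absorbed by $\eta_0$.
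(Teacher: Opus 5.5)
Your exclusion of \textup{(S2)} has a genuine gap. From $m_k(u_k)\le\max_j a_{j,k}^{\alpha_k}+o(1)$ and $\dist_{s_k}(u_k,\cM_{s_k})^2=Q_k(u_k)-m_k(u_k)$ you get only the \emph{lower} bound $\dist_{s_k}(u_k,\cM_{s_k})^2\ge1-\max_ja_j-o(1)$. Your quotient estimate needs the reverse inequality $\dist_{s_k}(u_k,\cM_{s_k})^2\le1-\max_ja_j+o(1)$. That would require some bubble to correlate almost maximally with one of the pieces, and nothing forces this. If the pieces are far from $\cM_{s_k}$, then $m_k(u_k)$ is small and $\dist_{s_k}(u_k,\cM_{s_k})^2\approx1$. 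In that case the Sobolev bound on the numerator only yields $c_{s_k}\gtrsim\frac1n\bigl(-\sum_ja_j\log a_j\bigr)$, which is $\frac{\log2}{n}$ for equal halves, half the threshold. So the masses alone cannot give the contradiction.

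The missing idea is to use minimality of $c_{s_k}$ on each piece. This is the inequality $Q_k(u_k^j)-a_{j,k}^{\alpha_k}\ge2s_kc_{s_k}\bigl(Q_k(u_k^j)-m_k(u_k^j)\bigr)$, which you wrote and then discarded. Sum it and subtract the minimizing identity $Q_k(u_k)-1=2s_kc_{s_k}\bigl(Q_k(u_k)-m_k(u_k)\bigr)$, using the three relations in \textup{(S2)}. This gives $S_k:=\sum_ja_{j,k}^{\alpha_k}-1\le2s_kc_{s_k}\Delta_k+o(s_k)$, where $\Delta_k:=\sum_jm_k(u_k^j)-\max_jm_k(u_k^j)$. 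Lemma \ref{lem:fractional-mixing} with $q_j=m_k(u_k^j)\le a_{j,k}^{\alpha_k}$ gives $S_k\ge(2-2^{\alpha_k})\Delta_k+o(s_k)$. Then \eqref{eq:strict-moving-gap-intro} forces $\Delta_k=o(1)$, hence $\limsup S_k/s_k\le0$. On the other hand, $x^{\alpha_k}-x\ge\frac{2s_k}{n}x\log\frac1x$ gives $\liminf S_k/s_k\ge-\frac2n\sum_ja_j\log a_j>0$, a contradiction.

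Two smaller points in this part. You must keep $a_{j,k}$ rather than the limits $a_j$ inside $\sum_ja^{\alpha_k}-1$, because that substitution costs $o(1)$, which swamps quantities of order $s_k$. Also, the numerator alone is not minimized at equal halves; it tends to $0$ as one mass tends to $1$.

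In the dichotomy, the obstacle you anticipate does not arise. On the cylinder the kernel becomes $[2(\cosh\tau-\theta\cdot\theta')]^{-\frac{n+2s}{2}}$, and its angular integral decays like $e^{-\frac n2|\tau|}$ uniformly in $s$. So a cutoff varying over length $L$ changes $Q_s$ by at most $CsL^{-2+\frac{2s}{n}}\norm{u}_{p_s}^2$, using only H\"older on the transition window (Lemma \ref{lem:slow-IMS}). No energy pigeonhole and no iteration are needed. A single split into an inner piece and two outer pieces suffices, with windows of length $L_k\to\infty$ placed where the concentration function has saturated.

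The mass defect is $o(s_k)$ for a different reason than you give. Off the window exactly one cutoff equals one, and on the window $1-\sum_j\chi_j^{p_k}\le C(p_k-2)$, while the window carries $o(1)$ mass. It is not that the window carries $o(s_k)$ mass. Finally, you never rule out $\beta=0$, although your pieces are supposed to carry mass $\gtrsim\beta$. The paper gets $\beta>0$ from $Q_k(u_k)\le1+Cs_k$ via Sobolev on cylinder windows (Lemma \ref{lem:periodic-window}).
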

\begin{proof}
The concentration alternative and the construction of the pieces in
\textup{(S2)} are proved in Appendix \ref{app:scale-localization}.
It remains to exclude \textup{(S2)} for minimizers for $c_{s_k}$.
All limits below are taken as $k\to\infty$, $k\in K$, with the index set supplied by the construction. Set
\[
 a_{j,k}:=\|u_k^j\|_{p_k}^{p_k},
 \qquad
 S_k:=\sum_j a_{j,k}^{\alpha_k}-1,
 \qquad
 \Delta_k:=\sum_j m_k(u_k^j)-\max_j m_k(u_k^j).
\]
The construction gives $0\le a_{j,k}\le1$,
$\sum\limits_j a_{j,k}=1-o(s_k)$, and $a_{j,k}\to a_j$, with at least two
positive $a_j$. In particular, $\sum\limits_j a_j=1$ and $\Delta_k\ge0$.

The minimizing identity and the definition of $c_{s_k}$ give
\[
\begin{aligned}
 Q_k(u_k)-1
 &=2s_kc_{s_k}\bigl(Q_k(u_k)-m_k(u_k)\bigr),\qquad
 Q_k(u_k^j)-a_{j,k}^{\alpha_k}
 \ge 2s_kc_{s_k}\bigl(Q_k(u_k^j)-m_k(u_k^j)\bigr).
\end{aligned}
\]
Summing the second inequality and using the energy and correlation
identities in \textup{(S2)}, together with $c_{s_k}=O(1)$, yields
\begin{equation}\label{eq:cc-key-upper}
 S_k\le 2s_kc_{s_k}\Delta_k+o(s_k).
\end{equation}
By H\"older inequality and the normalization of the dual weights,
\[
 0\le m_k(u_k^j)\le\|u_k^j\|_{p_k}^2=a_{j,k}^{\alpha_k}.
\]
For all sufficiently large $k$, apply Lemma \ref{lem:fractional-mixing}
to the probability vector and the corresponding nonnegative numbers
\[
 \left\{\frac{a_{j,k}}{\sum_i a_{i,k}}\right\}_j,
 \qquad
 \left\{\frac{m_k(u_k^j)}{(\sum_i a_{i,k})^{\alpha_k}}\right\}_j.
\]
Multiplying the resulting inequality by
$(\sum_i a_{i,k})^{\alpha_k}=1+o(s_k)$ gives
\begin{equation}\label{eq:cc-key-lower}
\begin{aligned}
 S_k
 &\ge (2-2^{\alpha_k})\Delta_k
      +\left(\sum_j a_{j,k}\right)^{\alpha_k}-1= (2-2^{\alpha_k})\Delta_k+o(s_k).
\end{aligned}
\end{equation}
The uniform gap \eqref{eq:strict-moving-gap-intro}
 implies
$2-2^{\alpha_k}-2s_kc_{s_k}\ge2s_k\eta_0$.
Combining \eqref{eq:cc-key-upper} and \eqref{eq:cc-key-lower}, we obtain
$\Delta_k=o(1)$, and hence
\[
 \limsup_{\substack{k\to\infty\\ k\in K}}\frac{S_k}{s_k}\le0.
\]
On the other hand,
$x^{\alpha_k}-x\ge(1-\alpha_k)x\log\frac1x$ for $0\le x\le1$,
with the value at $x=0$ defined by continuity. Therefore
\[
 S_k\ge\sum_j a_{j,k}-1
       +\frac{2s_k}{n}\sum_j a_{j,k}\log\frac1{a_{j,k}},
\]
and the finite number of pieces gives
\begin{equation}\label{eq:cc-positive-mixing}
 \liminf_{\substack{k\to\infty\\ k\in K}}\frac{S_k}{s_k}
 \ge-\frac2n\sum_j a_j\log a_j>0.
\end{equation}
The strict positivity follows from $\sum\limits_j a_j=1$ and the presence of
at least two positive limiting masses. This contradicts the preceding
upper bound and excludes \textup{(S2)} for minimizers.
\end{proof}

\begin{corollary}\label{cor:Hersch-no-atoms}
Assume the hypotheses of Proposition \ref{prop:scale-cc},
and suppose that $u_k$ are normalized minimizers  for $c_{s_k}$
satisfying the strict gap
\eqref{eq:strict-moving-gap-intro}
and the Hersch condition
\eqref{eq:Hersch-intro}.
Then the probability measures
$ \mu_k=|u_k|^{p_k}\,d\sigma$
satisfy the uniform non-concentration estimate
\begin{equation}\label{eq:uniform-no-atoms}
 \lim_{r\rightarrow0^+}
 \limsup_{k\to\infty}
 \sup_{\xi\in\mathbb S^n}
 \mu_k(B_r(\xi))
 =0.
\end{equation}
\end{corollary}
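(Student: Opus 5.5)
We argue by contradiction and reduce to alternative \textup{(S1)} of Proposition \ref{prop:scale-cc}. Suppose \eqref{eq:uniform-no-atoms} fails. Then there are $\delta>0$, radii $r_j\to0$, indices $k_j\to\infty$ and points $\xi_j\in\Sn$ with $\mu_{k_j}(B_{r_j}(\xi_j))\ge\delta$. Passing to a subsequence, we may assume $\xi_j\to\xi_*\in\Sn$. The minimizers satisfy $Q_k(u_k)=1+O(s_k)$, as shown at the start of this section. Proposition \ref{prop:scale-cc} therefore applies along this subsequence. Since \eqref{eq:strict-moving-gap-intro} holds, \textup{(S2)} is excluded, so \textup{(S1)} holds: after translations $T_k$ in the conformal cylinder, the measures $|v_k(t+T_k,\theta)|^{p_k}\dd\mu$ are tight. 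Here $v_k$ is the cylinder representation of $u_k$ centred at a fixed pair of poles. By rotation invariance of the whole setting, which preserves both the Hersch condition and the measures up to rotation, we choose the poles of the cylinder to be $\pm\xi_*$.

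Next we translate tightness back to the sphere. Translation by $T$ in the cylinder variable $t$ corresponds to the dilation $\Phi_T$ of $\Sn$ fixing $\pm\xi_*$. Tightness on the cylinder means: for every $\varepsilon>0$ there is $R$ such that $\mu_k$ gives mass at least $1-\varepsilon$ to the annular region $A_k=\Phi_{T_k}(\{|t|\le R\})$. We now split into cases according to $T_k$, passing to a subsequence so that either $T_k$ is bounded or $T_k\to\pm\infty$.

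Case 1: $|T_k|\to\infty$. Then $A_k$ shrinks to the pole $\xi_*$ or to $-\xi_*$, so $\mu_k$ converges weakly to a Dirac mass $\delta_{\pm\xi_*}$. The Hersch condition \eqref{eq:Hersch-intro} says $\int\omega\,\dd\mu_k=0$. Passing to the limit gives $\pm\xi_*=0$, which is impossible. This is exactly where Hersch normalization prevents the whole mass from drifting to a pole.

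Case 2: $T_k$ is bounded. Then $A_k$ stays inside a fixed compact band $\{|t|\le R'\}$. In cylinder coordinates the pullback of $\dd\sigma$ is comparable to $\dd\mu$ on that band, with constants depending only on $R'$. Moreover, the cylinder transform is a conformal change, and the densities transform by exactly the Jacobian, because $\frac{n-2s}{2}p_s=n$. Hence the mass of $\mu_k$ on $B_r(\xi)\cap\{|t|\le R'\}$ equals the $|v_k|^{p_k}\dd\mu$-mass of a set whose cylinder diameter is $O(r)$.

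The main obstacle is that tightness alone does not forbid a sharp spike inside the compact band. To handle it, we apply the concentration-compactness alternative again at the small scale. Suppose mass $\delta$ sits in $B_{r_j}(\xi_j)$ with $\xi_j$ inside the band. Re-centre the cylinder at the poles $\pm\xi_j$ and apply Proposition \ref{prop:scale-cc} to the rotated sequence; the Hersch condition is rotation-invariant. Again \textup{(S1)} must hold, with new translations $T'_k$. Mass $\delta$ inside a ball of radius $r_j\to0$ around the pole $\xi_j$ forces part of the mass to travel to $t\to+\infty$. On the other hand, mass $1-\varepsilon$ sits at bounded distance from $\xi_*$, hence far from the new pole when $\xi_j$ is near $\xi_*$; that mass lies at bounded cylinder height. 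Tightness with a single translation $T'_k$ cannot hold both pieces once $\delta>\varepsilon$. Therefore tightness fails, and \textup{(S2)} would be forced, which is excluded.

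A subtle point remains: a small-scale concentration near $\xi_*$ could sit at the same location as the bulk of the mass. To deal with this, choose the new poles $\pm\xi_j'$ to be points at distance $\sim\sqrt{r_j}$ from $\xi_j$. The $\delta$-mass near $\xi_j$ then still escapes to $|t|\to\infty$ in the new cylinder, whereas a fixed fraction of the remaining mass, of size at least $1-\delta-\varepsilon$, stays at bounded cylinder height. This is guaranteed by the tightness in the first cylinder, which bounds the mass of any ball of radius $\sqrt{r_j}$ by less than $1-\delta$ once $\delta<1/2$; we may shrink $\delta$ to arrange this.

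Since Case 1 contradicts the Hersch condition and Case 2 forces the excluded alternative \textup{(S2)}, the estimate \eqref{eq:uniform-no-atoms} follows.
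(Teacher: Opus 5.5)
Your opening matches the paper's proof. You centre the cylinder at $\pm\xi_*$, obtain \textup{(S1)} from Proposition~\ref{prop:scale-cc} and \eqref{eq:strict-moving-gap-intro}, and exclude $|T_k|\to\infty$ via \eqref{eq:Hersch-intro}; your Case~1 is exactly the paper's argument. The trouble is Case~2, where you lose track of your own choice of poles. Because $\xi_*$ is a pole, the spike is not inside the band. Set $\rho_j:=r_j+d(\xi_j,\xi_*)\to0$. Then $B_{r_j}(\xi_j)\subset B_{\rho_j}(\xi_*)\subset\{t\ge R_j\}$ for some $R_j\to\infty$. Bounded $T_k$ means the unshifted measures are tight, so for $\varepsilon<\delta$ and the corresponding $R'$ you get $\delta\le\mu_{k_j}(B_{r_j}(\xi_j))\le\mu_{k_j}(\{t>R'\})\le\varepsilon$ for large $j$, a contradiction. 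That is all of Case~2, and it is how the paper concludes.

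Everything after ``The main obstacle'' should be removed. It is redundant, and it would also fail in the situation it is meant to handle. The hypothesis ``$\xi_j$ inside the band'' already contradicts $\xi_j\to\xi_*$. Suppose a spike really did sit at an interior point $\zeta$ of the band, which your setup rules out. Then all of the mass could concentrate at $\zeta$: for example, a single bubble concentrating at $\zeta$ is perfectly tight in $t$. In that case there is no bulk piece at bounded height in a cylinder centred near $\zeta$. Your claim that tightness in the first cylinder bounds the mass of every ball of radius $\sqrt{r_j}$ by $1-\delta$ is therefore false. Tightness in the $t$-variable says nothing about concentration inside a compact band. What excludes such a spike is the Hersch condition applied in a cylinder centred at the spike, which forces the new translations $T_k'$ to be bounded. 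Your re-centred paragraph produces $T_k'$ but never uses Hersch to bound them; doing so would finish at once and is literally the paper's argument. With the poles at $\pm\xi_*$, as you chose them, none of this is needed.
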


\begin{proof}
Fix a pair of antipodal poles and the associated conformal cylinder.
For every infinite index set $K_0\subset\mathbb N$, Proposition
\ref{prop:scale-cc}, together with the strict gap
\eqref{eq:strict-moving-gap-intro}, yields an infinite
$K_1\subset K_0$ and translations $T_k$ satisfying
\textup{(S1)}, the alternative \textup{(S2)} is excluded.
The translations $T_k$ are bounded on $K_1$. Otherwise, after passing
to $K_2\subset K_1$, either $T_k\to+\infty$ or $T_k\to-\infty$.
Tightness in the translated coordinates would then imply
$\mu_k\rightharpoonup\delta_p$,
where $p$ is the corresponding pole, contradicting
\[
0=\int_{\mathbb S^n}\omega\,d\mu_k(\omega)
\longrightarrow p\neq0.
\]
Thus, after a further extraction, the measures are tight in the
unshifted cylinder.

Suppose now that \eqref{eq:uniform-no-atoms} fails. Then there exist $\eta>0$,  an infinite set $K_0\subset\mathbb N$, radii $r_k\to0$ and
points $\xi_k\in\mathbb S^n$,
$k\in K_0$, such that
\[
\mu_k(B_{r_k}(\xi_k))\ge\eta.
\]
By compactness of $\mathbb S^n$, there exist an
infinite $K_1\subset K_0$ and $\xi \in\mathbb S^n$ such that
 $\xi_k\to\xi$ as $k \rightarrow\infty$, $k \in K_1$.
 Set
$\rho_k:=r_k+d(\xi_k,\xi)$,
so that $\rho_k\to0$ and
$B_{r_k}(\xi_k)\subset B_{\rho_k}(\xi)$.
Apply the preceding tightness conclusion to the sequence indexed by   $K_1$, using  the fixed cylinder
with poles $\xi$ and $-\xi$.
There is an
infinite set $K_2\subset K_1$ along which the measures are tight  in this cylinder.
With  $\xi$ corresponding to $t=+\infty$, choose
$R_k\to\infty$,   so that
$B_{\rho_k}(\xi)\subset\{t\ge R_k\}$. Tightness gives
$\mu_k(\{t\ge R_k\})\to0$, contradicting
the lower bound $\eta$.
Thus \eqref{eq:uniform-no-atoms} holds.
\end{proof}

\section{ Nonvanishing, active-set equations, and uniform boundedness}
\label{sec:uniform-boundedness}
 We first establish nonvanishing for nonnegative
low-energy sequences, a sign-mixing estimate then excludes cancellation for the
signed minimizers. The maxima may still blow up, we control this possibility through the
active-set Euler-Lagrange equation.
\medskip

\begin{lemma}\label{lem:nonnegative-nonvanishing}
Let $s_k\to0^+$ and let $w_k\ge0$ belong to the form domain of $Q_k$.
Suppose that
\[
 \|w_k\|_{p_k}=1,\qquad Q_k(w_k)\le1+C_0s_k,
\]
 and that $w_k^{p_k}\,d\sigma$ satisfies
\eqref{eq:uniform-no-atoms}. Then
\begin{equation}\label{eq:no-L2-collapse}
 \liminf_{k\to\infty}\|w_k\|_2^2>0.
\end{equation}
Moreover, if $K\subset\mathbb N$ is infinite and
$w_k\rightharpoonup w$ in $L^2(\Sn)$ as $k\to\infty$, $k\in K$,
then $w\ne0$.
\end{lemma}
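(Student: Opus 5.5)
The first claim will come from a localized entropy-gain argument at the $O(s_k)$ scale, in the spirit of the exclusion of \textup{(S2)} in Proposition \ref{prop:scale-cc}; the second claim will be reduced to it by a truncation argument.

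\textbf{Step 1: localized energy.} Fix $r>0$ and choose Lipschitz functions $\chi_1,\dots,\chi_N$ on $\Sn$ with $\sum_\nu\chi_\nu^2=1$, each supported in a ball of radius $r$, with bounded overlap and $|\nabla\chi_\nu|\le C/r$. Then $\Gamma_\chi(\omega,\eta)\le C_r\min\{1,|\omega-\eta|^2\}$, so the kernel $\Gamma_\chi(\omega,\eta)|\omega-\eta|^{-n-2s}$ is integrable in each variable, uniformly for $s\le s_0$. Since $w_k\ge0$, Lemma \ref{lem:IMS}, \eqref{eq:Bns} and Schur's test give
\[
 \sum_\nu Q_k(\chi_\nu w_k)\le Q_k(w_k)+C_rs_k\|w_k\|_2^2\le1+C_0s_k+C_rs_k\|w_k\|_2^2 .
\]
Applying the sharp inequality $Q_k(v)\ge\|v\|_{p_k}^2$ to each piece and writing $a_{\nu,k}:=\|\chi_\nu w_k\|_{p_k}^{p_k}$, I get
\[
 \sum_\nu a_{\nu,k}^{\alpha_k}\le 1+C_0s_k+C_rs_k\|w_k\|_2^2 .
\]

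\textbf{Step 2: mass bookkeeping and entropy gain.} Next I check that $\sum_\nu a_{\nu,k}=\int w_k^{p_k}\sum_\nu\chi_\nu^{p_k}\,\dd\sigma\ge1-Cs_k$. This holds because $\chi^{p_k}-\chi^2=\chi^2(\chi^{p_k-2}-1)$ and $\chi^2|\log\chi|$ is bounded, while $p_k-2=O(s_k)$. As in the proof of Proposition \ref{prop:scale-cc}, the elementary bound $x^{\alpha_k}-x\ge(1-\alpha_k)x\log\frac1x$ then yields
\[
 \sum_\nu a_{\nu,k}^{\alpha_k}\ge1-Cs_k+\frac{2s_k}{n}\sum_\nu a_{\nu,k}\log\frac1{a_{\nu,k}} .
\]
By \eqref{eq:uniform-no-atoms}, for $k$ large every $a_{\nu,k}\le\eps(r)$ with $\eps(r)\to0$ as $r\to0$. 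Hence the entropy sum is at least $(1-Cs_k)\log\frac1{\eps(r)}$. Comparing with Step 1 and dividing by $s_k$ gives
\[
 \frac2n\log\frac1{\eps(r)}-C\le C_r\|w_k\|_2^2 .
\]
Fixing $r$ so small that the left side is positive proves \eqref{eq:no-L2-collapse}.

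\textbf{Step 3: nonzero weak limit.} Suppose $w_k\rightharpoonup0$ along $K$. Since $w_k\ge0$, testing against $\one$ gives $\int w_k\,\dd\sigma\to0$. For fixed $M$ this forces
\[
 \|\min\{w_k,M\}\|_2^2\le M\int w_k\,\dd\sigma\to0,
\]
so by \eqref{eq:no-L2-collapse} a fixed amount of $L^2$ mass sits on $\{w_k>M\}$, a set of $\sigma$-measure at most $M^{-p_k}$. I would then rerun Steps 1--2 with the cutoffs replaced by a partition in the range variable, $\one=\phi_M(w)^2+\psi_M(w)^2$ composed with $w_k$. Alternatively, I would use the small-measure set as a substitute for small balls: the $L^{p_k}$-mass of $w_k$ then splits into a part with vanishing $L^2$ norm and a high part, and the entropy gain $\frac{2s_k}{n}\log M$ must be paid for by an $O(s_k)$ energy budget. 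Letting $M\to\infty$ should contradict the bound $Q_k(w_k)\le1+C_0s_k$.

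\textbf{Expected obstacle.} I expect Step 3 to be the main difficulty. The truncation error in Lemma \ref{lem:IMS}-type identities for compositions is not controlled by a fixed Lipschitz kernel, so it is unclear that the localization error stays $O(s_k)$ with a constant independent of $M$. If that fails, my fallback is to show directly that $\{w_k\}$ is precompact in $L^2$ away from the high part. For this I would use the spectral form $Q_k(w)=\sum r_{s_k,\ell}|w_{\ell,m}|^2$ together with $r_{s,\ell}-1\approx2s\Lambda_\ell$, and combine it with the entropy expansion of $\|w_k\|_{p_k}^{p_k}$.
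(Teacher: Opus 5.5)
Your Steps 1--2 are correct and prove \eqref{eq:no-L2-collapse}. This is the paper's argument run quantitatively: you keep the localization error as $C_rs_k\|w_k\|_2^2$, whereas the paper assumes $\|w_k\|_2\to0$ so that this error is $o(s_k)$. You should state that the overlap bound is independent of $r$. The constant in $\sum_\nu a_{\nu,k}\ge1-Cs_k$ depends on it, and you let $r\to0$ against that constant.

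The second assertion, however, is not proved. Step 3 lists strategies, and none of them closes. The range-variable partition $\phi_M(w_k)^2+\psi_M(w_k)^2=1$ is not Lipschitz on $\Sn$, so Lemma \ref{lem:IMS} gives no usable error bound. More fundamentally, the entropy gain in Step 2 came from every piece carrying small $L^{p_k}$-mass. The high part $\{w_k>M\}$ may carry almost all of that mass, and a set of small measure is not a small cap, so \eqref{eq:uniform-no-atoms} says nothing about it. The spectral fallback needs $(1-\|w_k\|_2^2)/s_k=O(1)$, which is exactly the entropy control not available before an $L^\infty$ bound.

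The missing idea is to keep the spatial cap localization, but at the shrinking scale $\rho_k:=\|w_k\|_1^{1/n}$. This scale tends to $0$ because $w_k\ge0$ and $w_k\rightharpoonup0$ force $\|w_k\|_1\to0$. Split the error in \eqref{eq:IMS} at $|\omega-\eta|=\rho_k$:
\begin{itemize}
\item By Schur's test, the near field is at most $Cs_k\rho_k^{-2s_k}\|w_k\|_2^2$.
\item The far field is at most $Cs_k\rho_k^{-n-2s_k}\|w_k\|_1^2=Cs_k\rho_k^{n-2s_k}$.
\end{itemize}
Estimating the far field through $\|w_k\|_2^2$, as in your Step 1, would cost a factor of order $\log(1/\rho_k)$. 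That is exactly how your constant $C_r$ degenerates as $r\to0$.

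The factor $\rho_k^{-2s_k}=\|w_k\|_1^{-2s_k/n}$ stays bounded. This follows from the interpolation $\|w_k\|_2\le\|w_k\|_1^{2s_k/(n+2s_k)}\|w_k\|_{p_k}^{n/(n+2s_k)}$ together with the first assertion. Hence the localization error is $O(s_k)$. Set $\varepsilon_k:=\sup_{\xi}\int_{B_{C\rho_k}(\xi)}w_k^{p_k}\,d\sigma$. Your mass--power inequality then gives $\varepsilon_k^{-2s_k/n}N_0^{1-p_k/2}\le1+Cs_k$, so $\log(1/\varepsilon_k)$ stays bounded. But \eqref{eq:uniform-no-atoms}, applied at the radii $C\rho_k\to0$, forces $\varepsilon_k\to0$, which is a contradiction.
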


\begin{proof}
Suppose first that $\|w_k\|_2\to0$ along a subsequence. Fix
$0<\varepsilon<1$. Uniform non-concentration provides a fixed radius
$r>0$ and  $\sum_j\chi_j^2=1$, with
bounded overlap $N_0$, such that each support carries at most
$\varepsilon$ of the $L^{p_k}$ mass. \eqref{eq:IMS}
 and
$B_{n,s_k}=O(s_k)$ give
\[
 \sum_jQ_k(\chi_jw_k)=Q_k(w_k)+o(s_k).
\]
Writing $a_{j,k}=\|\chi_jw_k\|_{p_k}^{p_k}$, bounded overlap and the
sharp Sobolev inequality imply
\[
 \varepsilon^{-\frac{2s_k}{n}}N_0^{1-\frac{p_k}{2}}
 \le \sum_ja_{j,k}^{\alpha_k}
 \le 1+C_0s_k+o(s_k).
\]
After taking logarithms and dividing by $s_k$,
$\frac2n\log\frac1{N_0\varepsilon}\le C_0$, a contradiction as
$\varepsilon\rightarrow0^+$. Thus \eqref{eq:no-L2-collapse} holds.

Now suppose $w_k\rightharpoonup0$. Since $w_k\ge0$, 
$\|w_k\|_1\to0$, while \eqref{eq:no-L2-collapse} gives
$\|w_k\|_2\ge z>0$. Interpolation yields
$\|w_k\|_1^{-2s_k/n}\le C_z$. Let
$\rho_k=\|w_k\|_1^{1/n}$. Repeating the preceding cap localization at
scale $\rho_k$, the near-far estimate in \eqref{eq:IMS} gives an
$O(s_k)$ localization error. Set
\[
 \varepsilon_k:=\sup_{\xi\in\Sn}
 \int_{B_{C_n\rho_k}(\xi)}w_k^{p_k}\,d\sigma.
\]
Uniform non-concentration gives $\varepsilon_k\to0$. And the
same mass-power estimate yields
\[
 \varepsilon_k^{-\frac{2s_k}{n}}N_0^{1-\frac{p_k}{2}}
 \le1+C s_k.
\]
Taking logarithms and dividing by $s_k$ bounds
$\log(1/\varepsilon_k)$ uniformly, a contradiction. Hence every weak
limit is nonzero.
\end{proof}
\medskip

The preceding lemma prevents $|u_k|$ from vanishing, but it does not exclude
cancellation between the positive and negative parts of $u_k$. The following energy
estimate rules out this possibility.

\begin{lemma} \label{lem:sign-mixing}
Let $s_k\to0^+$, with $0<s_k<\min\{1,\frac{n}{2}\}$, and let
$u_k\in H^{s_k}(\mathbb S^n)$.
Assume that $u_k\rightharpoonup u$ and
$|u_k|\rightharpoonup v$ in $L^2(\mathbb S^n)$.
  If $v\neq\abs u$, then
\begin{equation}\label{eq:sign-mixing}
 \frac{Q_k(u_k)-Q_k(\abs{u_k})}{s_k}\longrightarrow+\infty.
\end{equation}
\end{lemma}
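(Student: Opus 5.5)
The plan is to compute $Q_k(u_k)-Q_k(\abs{u_k})$ exactly with the Aronszajn--Smith representation \eqref{eq:AS-representation}, and then show that the resulting cross-sign interaction diverges by a weak-convergence argument with a truncated kernel.

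\textbf{Step 1: exact formula for the difference.} Since $0<s_k<1$ and $\abs{\cdot}$ is a contraction, $\abs{u_k}\in H^{s_k}(\Sn)$ and $\norm{\abs{u_k}}_2=\norm{u_k}_2$. The $L^2$ terms in \eqref{eq:AS-representation} therefore cancel. For reals $a,b$,
\[
 \abs{a-b}^2-\bigl(\abs a-\abs b\bigr)^2=2(\abs a\abs b-ab)=4\bigl(a_+b_-+a_-b_+\bigr).
\]
By symmetry of the kernel this gives
\[
 \frac{Q_k(u_k)-Q_k(\abs{u_k})}{s_k}
 =\frac{8B_{n,s_k}}{s_k}\iint_{\Sn\times\Sn}
 \frac{u_{k,+}(\omega)\,u_{k,-}(\eta)}{\abs{\omega-\eta}^{n+2s_k}}\dd\sigma(\omega)\dd\sigma(\eta).
\]
By \eqref{eq:Bns}, $B_{n,s}/s\to 2^n\Gamma(\frac n2+1)\Gamma(\frac n2)/\Gamma(n+1)>0$. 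Since $\abs{\omega-\eta}\le2$, we also have $\abs{\omega-\eta}^{-n-2s_k}\ge 2^{-2}\abs{\omega-\eta}^{-n}$. It therefore suffices to show
\[
 \iint \frac{u_{k,+}(\omega)u_{k,-}(\eta)}{\abs{\omega-\eta}^n}\dd\sigma(\omega)\dd\sigma(\eta)\to+\infty .
\]

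\textbf{Step 2: identify the weak limits.} Weakly convergent sequences are bounded, so $u_{k,\pm}$ are bounded in $L^2$. Pass to an arbitrary subsequence along which $u_{k,+}\weak a$ and $u_{k,-}\weak b$, with $a,b\ge0$. Then $u=a-b$ and $v=a+b$. Because $\abs{a-b}=a+b$ exactly where $\min\{a,b\}=0$, the hypothesis $v\ne\abs u$ gives $\delta>0$ and a set $E$ with $\sigma(E)>0$ such that $a,b\ge\delta$ on $E$. Since every subsequence will have a further subsequence along which the quantity diverges, divergence of the full sequence follows.

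\textbf{Step 3: truncation and compactness.} For $M>0$, let $K_M(\omega,\eta)=\min\{\abs{\omega-\eta}^{-n},M\}$. This is a bounded kernel, so the integral operator it defines is Hilbert--Schmidt, hence compact on $L^2(\Sn)$. It maps $u_{k,-}\weak b$ to a strongly convergent sequence, and pairing with $u_{k,+}\weak a$ gives
\[
 \iint u_{k,+}(\omega)u_{k,-}(\eta)K_M\to\iint a(\omega)b(\eta)K_M \ge \delta^2\iint_{E\times E}K_M .
\]
All integrands are nonnegative and $K_M\le\abs{\omega-\eta}^{-n}$. Hence the $\liminf$ of the untruncated integral is at least $\delta^2\iint_{E\times E}K_M$ for every $M$. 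By monotone convergence it remains to show
\[
 \iint_{E\times E}\abs{\omega-\eta}^{-n}=\infty .
\]

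\textbf{Step 4: divergence of the singular integral over $E\times E$.} This is the only genuinely non-formal point. Take a Lebesgue density point $\omega$ of $E$. For large $j$, the dyadic annulus $B_{2^{-j}}(\omega)\setminus B_{2^{-j-1}}(\omega)$ meets $E$ in measure at least $c_n2^{-jn}$. On that annulus the kernel is at least $2^{jn}$, so each annulus contributes at least $c_n$. Summing over $j$ gives $\int_E\abs{\omega-\eta}^{-n}\dd\sigma(\eta)=\infty$ for a.e.\ $\omega\in E$. Integrating over $E$, which has positive measure, gives the claim and hence \eqref{eq:sign-mixing}.
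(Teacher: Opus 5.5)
Your proof is correct and follows essentially the same route as the paper: you use the Aronszajn--Smith identity $Q_k(u_k)-Q_k(\abs{u_k})=8B_{n,s_k}\iint u_{k,+}(\omega)u_{k,-}(\eta)\abs{\omega-\eta}^{-n-2s_k}\,d\sigma\,d\sigma$, a compact (Hilbert--Schmidt) truncation of the kernel to pass to the weak limits $a,b$, and divergence of the limiting interaction on a positive-measure set where $ab>0$. The differences are cosmetic: you truncate by $\min\{\abs{\omega-\eta}^{-n},M\}$ instead of using dyadic annuli, and you reduce to $E\times E$ with $a,b\ge\delta$, so the final divergence becomes a Lebesgue density-point argument that spells out the step the paper states briefly as $K_jb(\omega)\ge c_nb(\omega)$.
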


\begin{proof}
 Write $u_{k,\pm}=\frac{|u_k|\pm u_k}{2}$.
Then
\[
u_{k,+}\rightharpoonup
a:=\frac{v+u}{2},
\qquad  u_{k,-}\rightharpoonup
b:=\frac{v-u}{2}
\]
in $L^2(\mathbb S^n)$. The positive cone is weakly closed, hence
$a,b\ge0$.  Moreover,
\[
v-|u|
=
a+b-|a-b|
=
2\min\{a,b\}.
\]
Thus $v\neq|u|$ implies that $a(\omega)b(\omega)>0$ on a set of
positive measure. Using \eqref{eq:AS-representation} and symmetry, we deduce that
\begin{equation*}
 Q_k(u_k)-Q_k(\abs{u_k})
 =
 8B_{n,s_k}
 \iint_{\Sn\times\Sn}
 \frac{u_{k,+}(\omega)u_{k,-}(\eta)}
 {\abs{\omega-\eta}^{n+2s_k}}
 \dd\sigma(\omega)\dd\sigma(\eta).
\end{equation*}
For $j\geq1$, let $K_j$ be the integral operator with kernel
 $\one_{\{2^{-j-1}<\abs{\omega-\eta}\leq2^{-j}\}}
 \abs{\omega-\eta}^{-n}$.
For fixed $j$, this kernel belongs to
$L^2(\mathbb S^n\times\mathbb S^n)$, so $K_j$ is
 compact on $L^2$.
 Hence
\[
K_j u_{k,-}\longrightarrow K_jb
\quad\text{strongly in }L^2,
\qquad
\langle u_{k,+},K_j u_{k,-}\rangle
\longrightarrow\langle a,K_jb\rangle.
\]
Since $|\omega-\eta|^{-n-2s_k}\ge|\omega-\eta|^{-n}$
on these annuli and  $\frac{B_{n,s}}{s}\to b_n^*>0$,
 for every fixed $J$,
\[
 \liminf_{k\to\infty}
 \frac{Q_k(u_k)-Q_k(\abs{u_k})}{s_k}
 \geq
 8b_n^*\sum_{j=1}^J\ip{a}{K_jb}.
\]
It remains to show that the series on the right diverges.
For almost every $\omega\in E:=\{\omega\in\mathbb S^n:a(\omega)b(\omega)>0\}$,
\[
K_jb(\omega)\ge c_n b(\omega)>0
\qquad\text{for all sufficiently large }j,
\]
we have $\sum\limits_{j=1}^\infty K_jb(\omega)=+\infty$ a.e.
on $E$. Since $a>0$ on $E$ and $\sigma(E)>0$,
Tonelli theorem gives
\[
\sum_{j=1}^\infty\langle a,K_jb\rangle=+\infty.
\]
Letting $J\to\infty$ completes the proof.
\end{proof}
\medskip

Combining nonvanishing of $|u_k|$ with the sign-mixing estimate yields a nonzero
weak limit of the minimizers.
\begin{corollary}
\label{cor:weak-nonvanishing}
Let $s_k\to0^+$ and let $u_k$ be the Hersch-normalized minimizers for $c_{s_k}$ with $\|u_k\|_{p_k}=1$.
There exists an infinite set $K\subset \mathbb{N}$ and
$u\in L^2(\Sn)\setminus\{0\}$ such that
\begin{equation}\label{eq:weak-nonzero}
 u_k\weak u
 \quad\text{in }L^2(\Sn)  \quad \text{as} \quad k\rightarrow \infty, \  k\in K.
\end{equation}
\end{corollary}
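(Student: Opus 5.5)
The plan is to combine Lemma \ref{lem:nonnegative-nonvanishing}, applied to $w_k:=\abs{u_k}$, with the sign-mixing estimate of Lemma \ref{lem:sign-mixing}.

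\emph{Step 1: weak limits.} First we need uniform $L^2$ bounds. Since $p_k>2$ and $\sigma$ is a probability measure, $\norm{u_k}_2\le\norm{u_k}_{p_k}=1$. Hence, on some infinite $K\subset\mathbb N$,
\[
u_k\weak u,\qquad \abs{u_k}\weak v\quad\text{in }L^2(\Sn).
\]

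\emph{Step 2: $v\neq0$.} Next we check that $w_k=\abs{u_k}$ satisfies the hypotheses of Lemma \ref{lem:nonnegative-nonvanishing}. By the Aronszajn--Smith representation \eqref{eq:AS-representation} and the pointwise bound $\bigl|\abs{a}-\abs{b}\bigr|\le\abs{a-b}$, we get $Q_k(\abs{u_k})\le Q_k(u_k)\le 1+Cs_k$; the last inequality is the bound derived at the start of Section \ref{sec:localization}. Moreover $\norm{\abs{u_k}}_{p_k}=1$. The measures $\abs{u_k}^{p_k}\,d\sigma$ are exactly $\mu_k$, so Corollary \ref{cor:Hersch-no-atoms} gives \eqref{eq:uniform-no-atoms}; this uses the Hersch normalization and the strict gap \eqref{eq:strict-moving-gap-intro}, which holds for large $k$. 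Lemma \ref{lem:nonnegative-nonvanishing} then yields $v\neq0$.

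\emph{Step 3: excluding sign cancellation, $v=\abs u$.} Suppose instead that $v\ne\abs u$. Lemma \ref{lem:sign-mixing} would then give $\bigl(Q_k(u_k)-Q_k(\abs{u_k})\bigr)/s_k\to+\infty$. This contradicts
\[
Q_k(u_k)-Q_k(\abs{u_k})\le Q_k(u_k)-\norm{\abs{u_k}}_{p_k}^2=Q_k(u_k)-1\le Cs_k,
\]
where we used the Sobolev inequality $Q_k(\abs{u_k})\ge\norm{\abs{u_k}}_{p_k}^2=1$. Hence $v=\abs u$.

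\emph{Conclusion.} Since $v\neq0$ and $v=\abs u$, we get $u\neq0$, which is \eqref{eq:weak-nonzero}.

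Only Step 3 needs care: the Sobolev lower bound must be applied to $\abs{u_k}$, so that the $O(s_k)$ energy budget of minimizers is set against the superlinear growth in Lemma \ref{lem:sign-mixing}. All other steps are direct invocations of earlier results.
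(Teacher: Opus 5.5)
Your proof is correct and follows the paper's argument essentially step for step. You extract joint weak limits of $u_k$ and $\abs{u_k}$, use $1\le Q_k(\abs{u_k})\le Q_k(u_k)\le 1+Cs_k$ together with Corollary \ref{cor:Hersch-no-atoms} to apply Lemma \ref{lem:nonnegative-nonvanishing} and get $v\neq0$, and then play the $O(s_k)$ bound on $Q_k(u_k)-Q_k(\abs{u_k})$ against Lemma \ref{lem:sign-mixing} to force $v=\abs{u}$, exactly as the paper does.
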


\begin{proof}
Since $\|u_k\|_2\le \|u_k\|_{p_k}=1$, weak compactness gives
an infinite set $K\subset\mathbb N$,  
and $u,v\in L^2(\mathbb S^n)$ such that, as $ k\rightarrow \infty$, 
\[
u_k\rightharpoonup u,
\qquad
|u_k|\rightharpoonup v
\quad\text{in }L^2(\mathbb S^n) .
\]
The representation \eqref{eq:AS-representation},  the sharp Sobolev
inequality and the preceding energy estimate for normalized minimizers
give
\[
1\le Q_k(|u_k|)\le Q_k(u_k)\le 1+C_0 s_k.
\]The normalization and critical mass measures are unchanged by taking absolute values. Therefore,
Lemma \ref{lem:nonnegative-nonvanishing}
gives $v\neq0$.
Moreover,
\[
0\le
\frac{Q_k(u_k)-Q_k(|u_k|)}{s_k}
\le C_0.
\]
Lemma \ref{lem:sign-mixing} forces $v=|u|$,
 and hence $u\neq0$.
\end{proof}
\medskip

Next we use the Euler-Lagrange equation to control the maxima.
The nearest bubble need not be unique, so the distance term
may not be differentiable. We therefore retain the full active-set.
We use the duality map $\mathfrak g_s$ defined in
\eqref{eq:signed-dual-map}. For every
$h\in\cM_{s,1}$, one has
$P_sh=\mathfrak g_s(h)$ by the Euler-Lagrange equation above.
For $u\ne0$, set
\[
 \mathcal A_s(u)=\left\{h\in\cM_{s,1}:
 \left|\int_{\mathbb{S}^n} u\,\mathfrak g_s(h)\,\dd\sigma\right|^2=m_s(u)\right\}.
\]
This active-set is nonempty and compact.  Indeed, if all correlations with
the positive dual weights $g_{\xi,s}=h_{\xi,s}^{p_s-1}$ vanished, analyticity
of $r\mapsto g_{r\theta,s}$ at $r=0$, followed by polarization and density of
spherical polynomials, would force $u=0$.  Thus $m_s(u)>0$.  On the other
hand, if $|\xi_k|\to1$,  H\"older inequality and absolute continuity of
the $L^{p_s}$ integral give
\[
\int_{\mathbb{S}^n} u g_{\xi_k,s}\,\dd\sigma\longrightarrow0.
\]
Hence a maximizing sequence for a positive correlation cannot escape to the  boundary of the parameter ball.
This compactness also permits the use of the Clarke-Danskin maximum rule in a
neighborhood of $u_s$ \cite{Clarke1983,Danskin1967}.

\begin{proposition}\label{prop:dual-critical}
Fix $0<s<\frac{n}{2}$.
Let $u_s\in H^s(\mathbb S^n)
\setminus\cM_s$ attain $c_s$, with $\|u_s\|_{p_s}=1$.
 There is a probability measure $\nu_s$ on $\mathcal A_s(u_s)$ such that
\begin{equation}\label{eq:EL-full}
 (1-2sc_s)P_su_s
 =\abs{u_s}^{p_s-2}u_s-2sc_sW_s,
\end{equation}
where
\begin{equation}\label{eq:Ws}
 W_s:=\int_{\mathcal A_s(u_s)}a_s(h)\,\mathfrak g_s(h)\,\dd\nu_s(h),
 \qquad
 a_s(h):=\int_{\mathbb S^n} u_s\,\mathfrak g_s(h)\,\dd\sigma.
\end{equation}
Moreover,
\begin{equation}\label{eq:residual-bound}
 \norm{P_su_s-\abs{u_s}^{p_s-2}u_s}_{H^{-s}_{P_s}}
 \leq2sc_s \dist_s(u_s,\cM_s),
\end{equation}
where $\norm{f}_{H^{-s}_{P_s}}^2:=\ip{f}{P_s^{-1}f}$. If the active bubble is unique up to sign, equality holds in \eqref{eq:residual-bound}.
\end{proposition}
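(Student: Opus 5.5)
We derive \eqref{eq:EL-full} by a first-variation argument applied to the functional
\[
 F(w):=Q_s(w)-\|w\|_{p_s}^2-2sc_s\bigl(Q_s(w)-m_s(w)\bigr),
\]
which is nonnegative on $H^s(\Sn)$ by the definition of $c_s$ and \eqref{eq:ms-ds}, and vanishes at $w=u_s$. Thus $u_s$ is a global minimizer of $F$. The first two terms and $Q_s$ are $C^1$, with derivatives $2P_su_s$ and $2|u_s|^{p_s-2}u_s$ (using $\|u_s\|_{p_s}=1$). The only nonsmooth part is $m_s(w)=\max_{h\in\cM_{s,1}}\bigl(\int w\,\mathfrak g_s(h)\dd\sigma\bigr)^2$. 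Each function $w\mapsto(\int w\,\mathfrak g_s(h))^2$ is smooth, with derivative $2a(h)\mathfrak g_s(h)$, and $\mathfrak g_s(h)=P_sh\in H^{-s}$. As noted before the statement, the active set $\mathcal A_s(u_s)$ is nonempty and compact, and maximizers stay in a compact parameter region for $w$ near $u_s$ since the correlations decay as $|\xi|\to1$. Danskin's theorem therefore gives the one-sided directional derivative
\[
 m_s'(u_s;\varphi)=\max_{h\in\mathcal A_s(u_s)}2a_s(h)\int\varphi\,\mathfrak g_s(h)\dd\sigma .
\]

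Minimality gives $F'(u_s;\varphi)\ge0$ for every $\varphi$, that is,
\[
 2(1-2sc_s)\ip{P_su_s}{\varphi}-2\ip{|u_s|^{p_s-2}u_s}{\varphi}+2sc_s\max_{h\in\mathcal A}2a_s(h)\ip{\mathfrak g_s(h)}{\varphi}\ge0 .
\]
The two linear terms are odd in $\varphi$. Hence the linear functional $-\ell:=-(1-2sc_s)P_su_s+|u_s|^{p_s-2}u_s$ satisfies
\[
 \ip{-\ell}{\varphi}\le 2sc_s\max_{h}a_s(h)\ip{\mathfrak g_s(h)}{\varphi}
\]
for all $\varphi$. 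By a Hahn--Banach/minimax separation argument (equivalently, Clarke's subdifferential of a max of smooth functions over a compact index set), $-\ell$ lies in the closed convex hull of $\{2sc_s\,a_s(h)\mathfrak g_s(h):h\in\mathcal A\}$ in $H^{-s}$. The map $h\mapsto a_s(h)\mathfrak g_s(h)$ is continuous on the compact set $\mathcal A$, so this convex hull consists exactly of barycenters $\int a_s(h)\mathfrak g_s(h)\dd\nu$ of probability measures $\nu$. This yields \eqref{eq:EL-full} with the stated $W_s$. The main obstacle is this step: it requires checking that the sign ambiguity ($\pm h$ both active) is harmless, which holds because $a_s(-h)\mathfrak g_s(-h)=a_s(h)\mathfrak g_s(h)$, and verifying Danskin's hypotheses uniformly in a neighborhood.

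For \eqref{eq:residual-bound}, rewrite \eqref{eq:EL-full} as
\[
 P_su_s-|u_s|^{p_s-2}u_s=2sc_s\bigl(P_su_s-W_s\bigr)=2sc_s P_s(u_s-\bar h),
 \qquad \bar h:=\int a_s(h)\,h\,\dd\nu ,
\]
using $\mathfrak g_s(h)=P_sh$. Then
\[
 \|P_s(u_s-\bar h)\|_{H^{-s}_{P_s}}^2=Q_s(u_s-\bar h)=Q_s(u_s)-2\int a_s^2\dd\nu+Q_s(\bar h).
\]
Since $Q_s(h)=1$ and $a_s(h)^2=m_s(u_s)$ on $\mathcal A$, Cauchy--Schwarz (Jensen) in the $Q_s$ inner product gives
\[
 Q_s(\bar h)\le\int a_s^2\,Q_s(h)\dd\nu=m_s(u_s).
\]
Hence the square is at most $Q_s(u_s)-m_s(u_s)=\dist_s(u_s,\cM_s)^2$ by \eqref{eq:ms-ds}. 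If the active bubble is unique up to sign, $\bar h=a_s(h)h$ is a single element and Jensen is an equality, so equality holds in \eqref{eq:residual-bound}.
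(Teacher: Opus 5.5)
Your proposal is correct and follows essentially the paper's route. You use the same nonnegative functional $J_s$ minimized at $u_s$ and the same nonsmooth first-order condition over the compact active set; you derive Clarke's maximum rule by hand (Danskin's directional derivative plus Hahn--Banach separation onto barycenters), whereas the paper simply cites it. For the residual bound, your expansion $Q_s(u_s-\bar h)=Q_s(u_s)-2m_s(u_s)+Q_s(\bar h)$ with Cauchy--Schwarz is an explicit form of the paper's convexity step $\norm{P_su_s-W_s}_{H^{-s}_{P_s}}\le\int\norm{P_su_s-a_s(h)\mathfrak g_s(h)}_{H^{-s}_{P_s}}\dd\nu_s=\dist_s(u_s,\cM_s)$.
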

\begin{proof}
By the definition of $c_s$,
\[
 J_s(v):=\cD_s(v)-2sc_s\bigl(Q_s(v)-m_s(v)\bigr)\ge0,
 \qquad J_s(u_s)=0,
\]
so $u_s$ is a global minimizer of $J_s$. For
$\ell_h(u)=\int u\,\mathfrak g_s(h)\,d\sigma$, the boundary-correlation
estimate above allows the supremum
$m_s(v)=\sup_h\ell_h(v)^2$ to be restricted, near $u_s$, to a fixed
compact parameter set. Moreover,
\[
 D(\ell_h^2)(u_s)[\varphi]
 =2a_s(h)\int_{\Sn}\varphi\,\mathfrak g_s(h)\,d\sigma.
\]
The Clarke maximum rule \cite{Clarke1983}
and the Fermat condition $0\in\partial^CJ_s(u_s)$ give a
probability measure $\nu_s$ on $\mathcal A_s(u_s)$ such that  $W_s$ in \eqref{eq:Ws} satisfies \eqref{eq:EL-full} in
$H^{-s}_{P_s}$.

For every active $h$,
$P_sh=\mathfrak g_s(h)$, $Q_s(h)=1$, and $a_s(h)^2=m_s(u_s)$. Hence
\[
 \|P_su_s-a_s(h)\mathfrak g_s(h)\|_{H^{-s}_{P_s}}^2
 =Q_s(u_s)-m_s(u_s)=\dist_s(u_s,\cM_s)^2.
\]
Averaging in $h$ and using convexity gives
$\|P_su_s-W_s\|_{H^{-s}_{P_s}}\le\dist_s(u_s,\cM_s)$. Equation
\eqref{eq:EL-full} now yields \eqref{eq:residual-bound}. If the active
bubble is unique up to sign, the barycenter is a single vector, and
equality holds.
\end{proof}

To control the forcing term uniformly, it remains to prevent active bubbles from
escaping to the boundary of the conformal parameter space. Along a subsequence for
which $u_k\rightharpoonup u\neq0$, this follows from the next estimate.

\begin{lemma}\label{lem:escaping-corr-zero}
Let $s_k\to0^+$ and assume that $\|u_k\|_{p_k}=1$ and
\eqref{eq:uniform-no-atoms} holds. If normalized fractional bubbles $h_k$ satisfy $\abs{h_k}^{p_k}\dd\sigma \weak \delta_\xi$, then
\begin{equation}\label{eq:escaping-corr-zero}
\int_{\mathbb{S}^n}u_k\abs{h_k}^{p_k-2}h_k\,\dd\sigma
\longrightarrow0.
\end{equation}
\end{lemma}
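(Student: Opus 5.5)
The plan is a near/far splitting around the concentration point $\xi$, with Hölder's inequality on each piece. Write $g_k:=\abs{h_k}^{p_k-2}h_k$. Then $\abs{g_k}=\abs{h_k}^{p_k-1}$ and $\norm{g_k}_{p_k'}=1$, as in \eqref{eq:fractional-dual-weight}. Fix $r>0$ and split the integral over $B_r(\xi)$ and its complement.

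On the near region, Hölder's inequality gives
\[
 \left|\int_{B_r(\xi)}u_kg_k\,\dd\sigma\right|
 \le\mu_k(B_r(\xi))^{\frac1{p_k}}\,\norm{g_k}_{p_k'}
 \le\mu_k(B_r(\xi))^{\frac1{p_k}} .
\]
Here $\mu_k=\abs{u_k}^{p_k}\dd\sigma$. By \eqref{eq:uniform-no-atoms}, $\limsup_k\sup_\zeta\mu_k(B_r(\zeta))=:\varepsilon(r)\to0$ as $r\to0^+$. Since $p_k\to2$, the near contribution is at most $\varepsilon(r)^{1/3}$ for all large $k$.

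On the far region, Hölder's inequality with $\norm{u_k}_{p_k}=1$ gives
\[
 \left|\int_{\Sn\setminus B_r(\xi)}u_kg_k\,\dd\sigma\right|
 \le\Bigl(\int_{\Sn\setminus B_r(\xi)}\abs{h_k}^{p_k}\,\dd\sigma\Bigr)^{\frac{1}{p_k'}} .
\]
The exponent identity $(p_k-1)p_k'=p_k$ is what turns the $L^{p_k'}$ norm of $g_k$ into a power of the $p_k$-mass of $h_k$. The hypothesis $\abs{h_k}^{p_k}\dd\sigma\weak\delta_\xi$, tested against a continuous cutoff equal to $1$ off $B_r(\xi)$ and $0$ on $B_{r/2}(\xi)$, shows this mass tends to $0$. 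Because $p_k'\to2$, the exponent stays bounded, so the far term tends to $0$.

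Therefore $\limsup_k\abs{\int u_kg_k\,\dd\sigma}\le\varepsilon(r)^{1/3}$ for every $r>0$, and letting $r\to0^+$ proves \eqref{eq:escaping-corr-zero}.

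The only point requiring care is uniformity. The non-concentration bound \eqref{eq:uniform-no-atoms} must be applied at the fixed center $\xi$, which is covered by the supremum over centers. The $k$-dependent exponents $1/p_k$ and $1/p_k'$ must be kept bounded away from $0$, which holds since both tend to $1/2$. No explicit bubble formula is needed, and no step is expected to be a genuine obstacle.
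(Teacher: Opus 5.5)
Your proposal is correct and is essentially the paper's own proof: you apply H\"older's inequality on $B_r(\xi)$ and on its complement, use $\abs{h_k}^{p_k}\dd\sigma\weak\delta_\xi$ to make the far bubble mass vanish, control the near mass by \eqref{eq:uniform-no-atoms} with both exponents $\frac{1}{p_k}$ and $1-\frac{1}{p_k}$ eventually at least $\frac13$, and then let $r\to0^+$. One small precision: your claim that the near contribution is ``at most $\varepsilon(r)^{1/3}$ for all large $k$'' holds only as a bound on the $\limsup$, but that is all your argument needs.
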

\begin{proof}
Define the probability measures
\[
d\rho_k=|u_k|^{p_k}\,d\sigma,
\qquad
d\eta_k=|h_k|^{p_k}\,d\sigma.
\]
H\"older inequality on $B_r(\xi)$ and its complement gives
\[
 \left|\int_{\mathbb{S}^n} u_k\abs{h_k}^{p_k-2}h_k \,d\sigma\right|
 \leq\rho_k(B_r(\xi))^{\frac{1}{p_k}}
 +\eta_k(B_r(\xi)^c)^{1-\frac{1}{p_k}}.
\]
For fixed $r>0$, weak convergence
$\eta_k\rightharpoonup\delta_\xi$ gives
\[
\eta_k(B_r(\xi)^c)\longrightarrow0.
\]
Moreover, since $p_{k}\to2$,   $\frac{1}{p_{k}}$ and $1-\frac{1}{p_{k}}$ are at least $\frac{1}{3}$
for all sufficiently large $k$.
Hence
\[
\limsup_{k\to\infty}
\left|
\int_{\mathbb{S}^n} u_k|h_k|^{p_{k}-2}h_k\,d\sigma
\right|
\le
\left(
\limsup_{k\to\infty}
\sup_{\zeta\in\mathbb S^n}
\rho_k(B_r(\zeta))
\right)^{\frac{1}{3}}.
\]
Letting $r\rightarrow0^+$ proves \eqref{eq:escaping-corr-zero}.
\end{proof}

\begin{proposition}\label{prop:correlation-compactness}
Let $K\subset\mathbb N$ be infinite and let $s_k\to0^+$ along $K$.
Suppose that $\|u_k\|_{p_k}=1$, that the associated measures satisfy
\eqref{eq:uniform-no-atoms} with $k\in K$, and that
\[
 u_k\rightharpoonup u\ne0
 \quad\text{in }L^2(\Sn)
 \quad\text{as }k\to\infty,\ k\in K.
\]
Then
\begin{equation}\label{eq:m-convergence}
 m_k(u_k)\longrightarrow m(u)>0
 \quad\text{as }k\to\infty,\ k\in K.
\end{equation}
Moreover, there exist $r\in(0,1)$ and $k_0$ such that
\[
 \mathcal A_{s_k}(u_k)
 \subset\{\pm h_{\xi,s_k}:|\xi|\le r\},
 \qquad k\in K,\ k\ge k_0.
\]
If $W_k$ is the active-set average in \eqref{eq:Ws},  then, for every integer $q\ge0$,
\begin{equation}\label{eq:W-Cq}
 \sup_{\substack{k\in K\\ k\ge k_0}}\|W_k\|_{C^q(\Sn)}\le C_q.
\end{equation}
\end{proposition}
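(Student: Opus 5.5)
We will prove the three assertions in order: first convergence of individual correlations for fixed bubble parameters, then exclusion of escaping active bubbles, and finally smoothness of the averaged forcing term.

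\emph{Step 1 (fixed-parameter convergence and the lower bound).} Fix $\xi\in\mathbb B^{n+1}$. Write $g_{\xi,s_k}=h_{\xi,0}^{1+\frac{2s_k}{n}}$. As in the proof of Lemma \ref{lem:fixed-distance-recovery}, $g_{\xi,s_k}\to h_{\xi,0}$ in $L^2$, uniformly for $|\xi|\le r$ with $r<1$ fixed; this holds because $h_{\xi,0}$ is bounded for such $\xi$. Since $\|u_k\|_2\le1$ and $u_k\rightharpoonup u$, we get
\[
 \int u_k g_{\xi,s_k}\,d\sigma\to\ip{u}{h_{\xi,0}}
\]
uniformly for $|\xi|\le r$, by equicontinuity in $\xi$. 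In particular $\liminf m_k(u_k)\ge\sup_{|\xi|\le r}\ip{u}{h_{\xi,0}}^2$ for every $r<1$, hence $\liminf m_k(u_k)\ge m(u)$. The inequality $m(u)>0$ follows from $u\ne0$ by the nondegeneracy argument given after the definition of $\mathcal A_s$: analyticity in $\xi$ at $0$, polarization, and density of spherical polynomials.

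\emph{Step 2 (no escape of near-maximizers; the main step).} Suppose that along a subsequence there are parameters $\xi_k$ with $|\xi_k|\to1$ and $\bigl|\int u_k g_{\xi_k,s_k}\,d\sigma\bigr|^2\ge m_k(u_k)-o(1)$. This covers both active bubbles and maximizing ones. Since $|h_{\xi_k,s_k}|^{p_k}d\sigma$ is the Jacobian of a Möbius map whose parameter tends to the boundary, after extraction it converges weakly to $\delta_\zeta$ for some $\zeta\in\Sn$. Lemma \ref{lem:escaping-corr-zero}, which uses \eqref{eq:uniform-no-atoms}, then gives correlation $\to0$. Hence $m_k(u_k)\to0$, contradicting $\liminf m_k(u_k)\ge m(u)>0$ from Step 1. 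Therefore there are $r<1$ and $k_0$ such that every near-maximizing parameter, and in particular $\mathcal A_{s_k}(u_k)$, lies in $\{|\xi|\le r\}$. The upper bound $\limsup m_k(u_k)\le\sup_{|\xi|\le r}\ip{u}{h_{\xi,0}}^2\le m(u)$ then follows from the uniform convergence in Step 1, which proves \eqref{eq:m-convergence}. The delicate point is to make the uniformity in $\xi$ explicit: we must check that the weak limit of the bubble measures is a Dirac mass whenever $|\xi_k|\to1$, and that the convergence in Step 1 is uniform on compact parameter sets.

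\emph{Step 3 ($C^q$ bound).} For $|\xi|\le r$ and $0<s<s_0$, the function
\[
 g_{\xi,s}=\left(\frac{\sqrt{1-|\xi|^2}}{1-\xi\cdot\omega}\right)^{\frac{n+2s}{2}}
\]
is smooth, with all $\omega$-derivatives up to order $q$ bounded by a constant $C_q(r,n)$, because $1-\xi\cdot\omega\ge1-r$. The coefficients satisfy $|a_k(h)|\le\|u_k\|_{p_k}\|g\|_{p_k'}=1$ by Hölder's inequality, and $\nu_k$ is a probability measure supported in this compact parameter set. Differentiating under the integral in \eqref{eq:Ws} therefore yields \eqref{eq:W-Cq}.
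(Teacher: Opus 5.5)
Your proof is correct and is essentially the paper's argument. It uses the same ingredients: the moment/density argument for $m(u)>0$, strong $L^2$ convergence $g_{\xi,s_k}\to h_{\xi,0}$ for parameters in a compact subset of $\mathbb B^{n+1}$, Lemma \ref{lem:escaping-corr-zero} for parameters tending to $\partial\mathbb B^{n+1}$, and the H\"older bound $|a_k(h)|\le1$ for \eqref{eq:W-Cq}. The only difference is the order, which is immaterial: you first rule out escaping near-maximizers using the $\liminf$ bound and then read off the $\limsup$ from uniform convergence on $\{|\xi|\le r\}$, whereas the paper gets the $\limsup$ by splitting a near-maximizing sequence into interior and boundary limits and only afterwards localizes the active set.
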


\begin{proof}
First, $m(u)>0$. Otherwise,
\[
\int_{\Sn}u(\omega)(1-\xi\cdot\omega)^{-\frac{n}{2}}\,d\sigma(\omega)
\]
vanishes on $\mathbb B^{n+1}$. Differentiating at $\xi=0$ gives
$\int_{\Sn}u(\omega)\omega^\alpha\,d\sigma(\omega)=0$
for every multi-index $\alpha$. Since spherical polynomials are dense in
$L^2(\Sn)$, this implies $u=0$, a contradiction.

We next prove \eqref{eq:m-convergence}. Choose a subsequence $K_1\subset K$
realizing the upper limit of $m_k(u_k)^{\frac{1}{2}}$. For $k\in K_1$, choose
$\xi_k\in\mathbb B^{n+1}$ such that
\[
m_k(u_k)^{\frac{1}{2}}
\leq
|\langle u_k,g_{\xi_k,s_k}\rangle|+\frac1k.
\]
After passing to a further subsequence, $\xi_k\to\bar\xi\in
\overline{\mathbb B^{n+1}}$.
If $|\bar\xi|<1$, then
$g_{\xi_k,s_k}\to h_{\bar\xi,0}$
strongly in $L^2(\Sn)$,
and hence
\[
|\langle u_k,g_{\xi_k,s_k}\rangle|
\to
|\langle u,h_{\bar\xi,0}\rangle|
\leq m(u)^{\frac{1}{2}}.
\]
If $|\bar\xi|=1$, the corresponding critical bubble measures
concentrate at $\bar\xi$, and 
Lemma \ref{lem:escaping-corr-zero} gives
$\langle u_k,g_{\xi_k,s_k}\rangle\to0$.
Therefore,
\[
\limsup_{\substack{k\to\infty\\k\in K}}
m_k(u_k)^{\frac{1}{2}}
\leq m(u)^{\frac{1}{2}}.
\]
Conversely, for every fixed $\xi\in\mathbb B^{n+1}$,
$g_{\xi,s_k}\to h_{\xi,0}$
strongly in $L^2(\Sn)$.
Thus
\[
\liminf_{\substack{k\to\infty\\k\in K}}
m_k(u_k)^{\frac{1}{2}}
\geq
|\langle u,h_{\xi,0}\rangle|.
\]
Taking the supremum over $\xi$ 
and combining the resulting lower bound
with the preceding upper bound
proves \eqref{eq:m-convergence}.

It remains to control the active parameters. Suppose, for contradiction, that active parameters escape to the boundary
of $\mathbb B^{n+1}$. After a further extraction,
$\xi_k\to\xi\in\partial\mathbb B^{n+1}$.
By Lemma \ref{lem:escaping-corr-zero},
$\langle u_k,\mathfrak g_{s_k}(h_k)\rangle\to0$, contradicting
\[
|\langle u_k,\mathfrak g_{s_k}(h_k)\rangle|^2
=m_k(u_k)\to m(u)>0.
\] This proves the compactness of all active parameters.
On this compact parameter set,  
\eqref{eq:fractional-dual-weight} gives
\[
\sup_{\substack{k\in K\\k\ge k_0}}
\sup_{h\in\mathcal A_{s_k}(u_k)}
\|\mathfrak g_{s_k}(h)\|_{C^q(\Sn)}
\leq C_q.
\]
By Hölder inequality and \eqref{eq:Ws}, 
\[
\|W_k\|_{C^q(\Sn)}
\le
\int_{\mathcal A_{s_k}(u_k)}
|a_k(h)|
\|\mathfrak g_{s_k}(h)\|_{C^q(\Sn)}
\,d\nu_k(h)
\le C_q.
\]
This proves \eqref{eq:W-Cq}.
\end{proof}

With $W_k$ uniformly controlled, fixed-order regularity provides a maximum point,
while uniform control of that maximum requires the following  estimates.

\begin{lemma}
\label{lem:kernel-constants}
Let $n\ge2$, $0<s<s_0<1$. Set
$\kappa_{n,s}
:=\frac{|\mathbb S^n|\Gamma(\frac n2+s)}
{2^{2s}\pi^{n/2}\Gamma(s)}$.
Then, for every $g\in L^{p_s'}(\mathbb S^n)$,
\begin{equation}\label{eq:inverse-kernel}
 P_s^{-1}g
 = \kappa_{n,s}\int_{\Sn}
 \frac{g(\eta)}{\abs{\omega-\eta}^{n-2s}}\dd\sigma(\eta)
 \qquad\text{in }L^{p_s}(\Sn),
\end{equation}
 hence almost everywhere.
For  $0<R\le1$,
\begin{equation}\label{eq:kernel-near}
 \kappa_{n,s}\int_{\abs{\omega-\eta}<R}
 \abs{\omega-\eta}^{2s-n}\dd\sigma(\eta)
 \leq A_{n,s}R^{2s},
\end{equation}
where
\begin{equation}\label{eq:A-ns}
 A_{n,s}
 :=
 \frac{\Gamma(\frac{n}{2}+s)}
 {2^{2s}\Gamma(\frac{n}{2})\Gamma(1+s)}
 =
 1+O_n(s).
\end{equation}
Moreover,
\begin{equation}\label{eq:kernel-far}
 \kappa_{n,s}
 \left(
 \int_{\abs{\omega-\eta}>R}
 \abs{\omega-\eta}^{-(n-2s)p_s}\dd\sigma(\eta)
 \right)^{\frac{1}{p_s}}
 \leq  C_ns R^{-\frac{n}{p_s}}.
\end{equation}
All constants are uniform for $0<s<s_0$.
\end{lemma}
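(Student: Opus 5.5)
Three things need proof: the kernel representation \eqref{eq:inverse-kernel}, the near-field bound \eqref{eq:kernel-near}, and the far-field bound \eqref{eq:kernel-far}. I will take them in that order, and the first step is to identify the kernel.

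\textbf{The kernel identity.} The Funk--Hecke formula applied to the Riesz kernel $\abs{\omega-\eta}^{-(n-2s)}$ on $\Sn$ gives, on $\mathcal H_\ell$, an eigenvalue proportional to $\frac{\Gamma(\ell+\frac n2-s)}{\Gamma(\ell+\frac n2+s)}$. This is the classical Beckner computation underlying \eqref{Beckner1}. I will compute the $\ell=0$ eigenvalue exactly and use it to fix the constant: since $P_s\one=\one$, we need $P_s^{-1}\one=\one$. In terms of the normalized measure $\dd\sigma$ the $\ell=0$ eigenvalue is
\[
\int_{\Sn}\abs{\omega-\eta}^{2s-n}\dd\sigma(\eta)
=\frac{2^{2s}\pi^{\frac n2}\Gamma(s)}{\abs{\Sn}\Gamma(\frac n2+s)},
\]
which comes from writing the integral as a Beta integral, $\int_{-1}^1(2-2t)^{s-\frac n2}(1-t^2)^{\frac n2-1}\dd t$ times the ratio $\abs{\mathbb S^{n-1}}/\abs{\Sn}$. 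Its inverse is exactly $\kappa_{n,s}$. The eigenvalue ratio across $\ell$ then matches $r_{s,\ell}^{-1}$ from \eqref{eq:frac-eigen}, so the identity holds on spherical polynomials. To extend it to all $g\in L^{p_s'}$ I will use density of spherical polynomials. The right-hand side is bounded $L^{p_s'}\to L^{p_s}$ by HLS on the sphere. The left-hand side is also bounded $L^{p_s'}\to L^{p_s}$, by duality with the Sobolev inequality $Q_s(u)\ge\norm u_{p_s}^2$. Since the two bounded operators agree on a dense set, they agree everywhere, and an $L^{p_s}$ identity holds almost everywhere after passing to a subsequence.

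\textbf{The near-field bound.} With $t=\omega\cdot\eta$ and $\abs{\omega-\eta}^2=2-2t$, the ball $\abs{\omega-\eta}<R$ is $t>1-\frac{R^2}{2}$. The integral becomes
\[
\frac{\abs{\mathbb S^{n-1}}}{\abs{\Sn}}\int_{1-\frac{R^2}{2}}^1(2-2t)^{s-\frac n2}(1-t^2)^{\frac n2-1}\dd t.
\]
I will substitute $\rho=\abs{\omega-\eta}$, which gives $1-t^2=\rho^2(1-\frac{\rho^2}{4})$ and $\dd t=-\rho\,\dd\rho$. The integrand is then at most $\rho^{2s-1}$ when $n\ge2$, because $(1-\frac{\rho^2}{4})^{\frac n2-1}\le1$. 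Integrating gives the bound $\frac{\abs{\mathbb S^{n-1}}}{\abs{\Sn}}\frac{R^{2s}}{2s}$. Multiplying by $\kappa_{n,s}$ and using $\abs{\mathbb S^{n-1}}=\frac{2\pi^{n/2}}{\Gamma(n/2)}$ together with $s\Gamma(s)=\Gamma(1+s)$ produces exactly $A_{n,s}R^{2s}$. Finally $A_{n,s}=1+O(s)$ follows by expanding the Gamma functions.

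\textbf{The far-field bound.} The exponent is $(n-2s)p_s=2n$, so the integral is $\int_{\rho>R}\rho^{-2n}\dd\sigma$. Since $\dd\sigma$ has density comparable to $\rho^{n-1}\dd\rho$, this is $\le C_nR^{-n}$. Taking the $\frac1{p_s}$ power gives $C_nR^{-\frac n{p_s}}$, with $C_n$ uniform in $s$ because $p_s\in[2,3]$. The factor $s$ comes from $\kappa_{n,s}\le C_ns$, which holds because $\frac{1}{\Gamma(s)}=\frac{s}{\Gamma(1+s)}$.

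The \emph{main obstacle} is the exact normalization in the first step. The constant $A_{n,s}$ must be $1+O(s)$ with no slack, and that depends on getting the Funk--Hecke constant right relative to the probability measure. Everything else reduces to one-dimensional integrals.
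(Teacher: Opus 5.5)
Your proposal is correct and follows the paper's proof essentially step for step. The kernel identity comes from Funk--Hecke plus density of spherical polynomials and spherical HLS; your explicit $\ell=0$ Beta-integral normalization and the duality bound $\|P_s^{-1}g\|_{p_s}\le\|g\|_{p_s'}$ simply make explicit what the paper asserts. Both the near and far bounds then follow from the radial formula $\dd\sigma=c_n\rho^{n-1}(1-\frac{\rho^2}{4})^{\frac{n-2}{2}}\dd\rho$, with the last factor at most one for $n\ge2$. One small slip: $p_s\in[2,3]$ fails for small $n$ when $s$ is close to $1$, but uniformity of the far-field constant only needs $\frac1{p_s}\le\frac12$, which is what the paper uses.
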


The proof is given in Appendix \ref{app:regularity}.

\begin{proposition}
 Let $s_k\to0^+$ with $0<s_k<\min\{1,\frac n6\}$, and let $u_k$ be
Hersch-normalized minimizers for $c_{s_k}$ satisfying $\|u_k\|_{p_k}=1$.
Then
\begin{equation}\label{eq:uniform-Linf}
 \sup_k\|u_k\|_\infty<\infty.
\end{equation}   
\end{proposition}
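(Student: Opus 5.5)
We argue by contradiction, following the strategy announced in the introduction. Suppose that along some subsequence $M_k:=\|u_k\|_\infty\to\infty$. By Corollary~\ref{cor:weak-nonvanishing} we may pass to an infinite set $K$ on which $u_k\rightharpoonup u\neq0$. Corollary~\ref{cor:Hersch-no-atoms} supplies the non-concentration \eqref{eq:uniform-no-atoms}, so Proposition~\ref{prop:correlation-compactness} applies and gives \eqref{eq:W-Cq}, that is, $\|W_k\|_\infty\le C$. Fixed-order regularity for \eqref{eq:EL-full} ensures that $u_k$ is continuous for each $k$. Replacing $u_k$ by $-u_k$ if necessary, which preserves everything, we pick $\omega_k$ with $u_k(\omega_k)=M_k$. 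We then invert $P_k$ in \eqref{eq:EL-full} by means of Lemma~\ref{lem:kernel-constants}:
\[
 (1-2s_kc_{s_k})\,u_k(\omega_k)
 =\kappa_{n,s_k}\int_{\Sn}\frac{|u_k|^{p_k-2}u_k(\eta)-2s_kc_{s_k}W_k(\eta)}{|\omega_k-\eta|^{n-2s_k}}\,\dd\sigma(\eta).
\]
Since $\kappa_{n,s}=O(s)$ and $W_k$ is bounded, the $W_k$ term contributes $O(s_k^2)$. We split the remaining integral into the near region $|\omega_k-\eta|<R_k$ and the far region. On the far region, H\"older's inequality with $\||u_k|^{p_k-1}\|_{p_k'}=1$ and \eqref{eq:kernel-far} give a bound $C_ns_kR_k^{-n/p_k}$. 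On the near region, $|u_k|^{p_k-1}\le M_k^{p_k-2}|u_k|$, and this can be bounded in two ways. The crude bound uses $M_k^{p_k-1}$ together with \eqref{eq:kernel-near}, which gives $A_{n,s_k}R_k^{2s_k}M_k^{p_k-1}$. The sharper bound splits off the mass.

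The main step is to choose $R_k$ so that the near-field estimate forces mass concentration. Dividing by $M_k$, we obtain
\[
 1-O(s_k)\le \frac{\kappa_{n,s_k}}{M_k}\int_{|\omega_k-\eta|<R_k}\frac{|u_k|^{p_k-1}}{|\omega_k-\eta|^{n-2s_k}}\,\dd\sigma+\frac{C s_k R_k^{-n/p_k}}{M_k}.
\]
We take $R_k$ shrinking, for example $R_k=M_k^{-\beta}$ with $\beta$ small, so that the far term is $o(s_k)$ relative to $1$; the restriction $s_k<\frac n6$ is presumably what makes the exponents compatible here. In the near integral we write $|u_k|^{p_k-1}=|u_k|^{p_k}\cdot|u_k|^{-1}$ on the set where $|u_k|\ge\tau M_k$. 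On the complementary set $|u_k|<\tau M_k$, the crude bound gives a fraction at most $A_{n,s_k}R_k^{2s_k}\tau^{p_k-1}M_k^{p_k-2}\approx\tau(1+O(s_k))$ of $M_k$, using $M_k^{p_k-2}R_k^{2s_k}=M_k^{O(s_k)}$. Here we need $M_k^{s_k}$ to stay bounded, or else a choice of $R_k$ tied to $M_k$ that neutralizes this factor. On the set where $|u_k|\ge\tau M_k$, the integrand is comparable to $|u_k|^{p_k}/M_k\cdot\kappa|\cdot|^{2s-n}$. Because the near-field coefficient is $1+O(s)$ and not larger, the deficit $1-\tau$ must be made up by the high set. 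Using $\kappa_{n,s}|\omega-\eta|^{2s-n}$ and the fact that the measure $\kappa_{n,s}|\cdot|^{2s-n}\dd\sigma$ on $B_{R_k}$ has total mass of order $R_k^{2s}$ and is spread logarithmically across scales, this should force $\mu_k(B_{R_k}(\omega_k))\ge c(\tau)>0$. Since $R_k\to0$, this contradicts \eqref{eq:uniform-no-atoms}.

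The hardest point is this last conversion from a pointwise identity to a mass lower bound, which requires controlling the factor $M_k^{p_k-2}=M_k^{4s_k/(n-2s_k)}$. If $M_k$ grows like $e^{c/s_k}$, this factor is not close to $1$. The first thing I would try is a dichotomy. If $s_k\log M_k\to0$, the argument above works with $R_k$ a fixed small power of $M_k^{-1}$. If $s_k\log M_k\ge c$, I would instead use $Q_k(u_k)\le 1+Cs_k$ and the Sobolev inequality. Testing \eqref{eq:EL-full} against $u_k$ shows that $\int|u_k|^{p_k}$ is controlled, and then $\int_{\{|u_k|\ge M_k/2\}}|u_k|^{p_k}\ge (M_k/2)^{p_k-2}\int_{\{|u_k|\ge M_k/2\}}u_k^2$. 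Combined with the near-kernel estimate, this should again produce a fixed amount of mass in a shrinking ball, giving the same contradiction with uniform non-concentration.
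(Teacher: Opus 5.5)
Your outer skeleton matches the paper: contradiction, a nonzero weak limit from Corollary~\ref{cor:weak-nonvanishing}, $\|W_k\|_\infty\le C$ from Proposition~\ref{prop:correlation-compactness}, the inverted Euler--Lagrange equation evaluated at a maximum point, and a final contradiction with \eqref{eq:uniform-no-atoms}. The gap is in the quantitative step, which is the real content of the proof.

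\textbf{Why the deficit must be $O(s_k)$.} The mass lower bound has to come out of an inequality of the form $a_k^{2s_k/n}\ge 1-Cs_k$. So every error relative to $M_k$ must be $O(s_k)$; an error that is merely $o(1)$ is useless.

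\textbf{Why your choice of $R_k$ fails, even in your good regime.} With $R_k=M_k^{-\beta}$ and $\beta<p_k/n$, the crude near-field constant is $A_{n,s_k}M_k^{2s_k(p_k/n-\beta)}$. This exceeds the required value $1-O(s_k)$ by order $s_k\log M_k\gg s_k$. So even when $s_k\log M_k\to0$ you only obtain an $o(1)$ deficit, and the first branch of your dichotomy does not close.

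\textbf{Why the threshold argument fails.} The split at $\{|u_k|\ge\tau M_k\}$ cannot yield a mass bound $c(\tau)$ independent of $s_k$. Suppose $|u_k|=M_k$ on a ball of radius $\rho$ with $M_k^{p_k-2}\rho^{2s_k}=1-\tau$. That ball contributes about $(1-\tau)A_{n,s_k}$ to the near field. Yet it carries $|u_k|^{p_k}$-mass about $M_k^{p_k}\rho^n=(1-\tau)^{n/(2s_k)}\to0$. The logarithmic spread of the kernel across scales works against you here.

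Your near-region analysis never brings in the $L^{p_k}$ mass via H\"older against \eqref{eq:kernel-far}. The second branch of your dichotomy has no mechanism: testing \eqref{eq:EL-full} against $u_k$ just returns the normalization. A minor point: the $W_k$ term is $O(s_k)$, not $O(s_k^2)$, because $\kappa_{n,s}\int|\omega-\eta|^{2s-n}\,d\sigma=P_s^{-1}\one=1$. This is harmless after dividing by $M_k$.

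\textbf{The missing idea: the natural radius $r_k=M_k^{-p_k/n}$.} Since $p_k(1-\frac{2s_k}{n})=2$, one has the exact identities $M_k^{p_k-1}r_k^{2s_k}=M_k$ and $r_k^{-n/p_k}=M_k$. The factor $M_k^{p_k-2}$ therefore disappears for every size of $s_k\log M_k$, and no dichotomy is needed.

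Let $a_k$ be the $|u_k|^{p_k}$-mass of $B_{r_k}(\omega_k)$, and split once more at radius $a_k^{1/n}r_k$:
\begin{itemize}
\item On the inner ball, use the crude bound with \eqref{eq:kernel-near}; this gives $A_{n,s_k}M_ka_k^{2s_k/n}$.
\item On the annulus, use H\"older with $\||u_k|^{p_k-1}\one_{B_{r_k}}\|_{p_k'}=a_k^{1/p_k'}$ and \eqref{eq:kernel-far}. Since $\frac1{p_k'}-\frac1{p_k}=\frac{2s_k}{n}$, this gives $C s_kM_ka_k^{2s_k/n}$.
\item Outside $B_{r_k}$, use H\"older with norm one; this gives $Cs_kM_k$.
\end{itemize}
Altogether $1-Cs_k\le(1+Cs_k)a_k^{2s_k/n}+Cs_k$, hence $\log a_k\ge-nC$ uniformly. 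The mass enters through the exponent $2s_k/n$, which is matched exactly against the $O(s_k)$ errors. That matching is what your proposal lacks.
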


\begin{proof}
For each fixed $k$, Lemma \ref{lem:fixed-regularity} gives a
continuous representative of $u_k$.
Both sides of the inverted Euler-Lagrange equation are continuous,
so the equation holds pointwise.
Suppose that \eqref{eq:uniform-Linf} fails. There is an infinite set
$K_0\subset\mathbb N$ such that
\[
 M_k:=\|u_k\|_\infty\longrightarrow\infty
 \quad\text{as }k\to\infty,\ k\in K_0.
\]
By Corollary \ref{cor:Hersch-no-atoms}, the critical measures
satisfy \eqref{eq:uniform-no-atoms}.
Corollary \ref{cor:weak-nonvanishing} gives an infinite
$K\subset K_0$ along which $u_k$ has a nonzero weak $L^2$ limit.
Proposition \ref{prop:correlation-compactness} then yields
\[
 \|W_k\|_\infty\le C_W.
\]
All subsequent estimates are for sufficiently large $k\in K$.

Choose $\omega_k\in\Sn$ with $|u_k(\omega_k)|=M_k$, and define
\begin{equation}\label{eq:natural-radius}
 r_k:=M_k^{-\frac{p_k}{n}}
     =M_k^{-\frac{2}{n-2s_k}}.
\end{equation}
In this proof, write
$B_r^{\mathrm{ch}}(\omega):=\{\eta\in\Sn:|\omega-\eta|<r\}$ and set
\[
 a_k:=\int_{B_{r_k}^{\mathrm{ch}}(\omega_k)}|u_k|^{p_k}\,d\sigma.
\]
Normalization and continuity give $M_k\ge1$, $r_k\le1$, and
$0<a_k\le1$. Splitting the nonlinear potential over
$B_{a_k^{\frac1n}r_k}^{\mathrm{ch}}(\omega_k)$,
$B_{r_k}^{\mathrm{ch}}(\omega_k)\setminus
 B_{a_k^{\frac1n}r_k}^{\mathrm{ch}}(\omega_k)$, and
$B_{r_k}^{\mathrm{ch}}(\omega_k)^c$, H\"older inequality and
\eqref{eq:kernel-near}-\eqref{eq:kernel-far} give
\[
\begin{aligned}
 P_k^{-1}(|u_k|^{p_k-1})(\omega_k)
 &\le A_{n,s_k}M_k^{p_k-1}(a_k^{\frac1n}r_k)^{2s_k}+C_ns_k\left(
 a_k^{\frac1{p_k'}}(a_k^{\frac1n}r_k)^{-\frac n{p_k}}
 +r_k^{-\frac n{p_k}}\right).
\end{aligned}
\]
The identities $p_k(1-\frac{2s_k}{n})=2$ and
$\frac1{p_k'}-\frac1{p_k}=\frac{2s_k}{n}$ yield
\[
\begin{gathered}
 M_k^{p_k-1}(a_k^{\frac1n}r_k)^{2s_k}
 =a_k^{\frac1{p_k'}}(a_k^{\frac1n}r_k)^{-\frac n{p_k}}
 =M_ka_k^{\frac{2s_k}{n}}, \qquad
 r_k^{-\frac n{p_k}}=M_k.
\end{gathered}
\]
Since $A_{n,s_k}=1+O_n(s_k)$,
\[
 P_k^{-1}(|u_k|^{p_k-1})(\omega_k)
 \le(1+C_ns_k)M_ka_k^{\frac{2s_k}{n}}+C_ns_kM_k.
\]
Positivity of the inverse kernel and $P_k^{-1}\one=\one$ imply
$\|P_k^{-1}W_k\|_\infty\le C_W$. Evaluating the inverted
Euler-Lagrange equation at $\omega_k$ therefore gives
\[
\begin{aligned}
 (1-2s_kc_{s_k})M_k
 &\le P_k^{-1}(|u_k|^{p_k-1})(\omega_k)
       +2s_kc_{s_k}\|P_k^{-1}W_k\|_\infty\\
 &\le(1+C_ns_k)M_ka_k^{\frac{2s_k}{n}}
       +C_ns_kM_k+2s_kc_{s_k}C_W.
\end{aligned}
\]
Divide by $M_k\ge1$ and use $c_{s_k}=O(1)$. For a constant $C$
independent of $k\in K$,
\[
 a_k^{\frac{2s_k}{n}}\ge1-Cs_k.
\]
For all sufficiently large $k\in K$, $Cs_k\le\frac12$, so
\[
 \log a_k\ge\frac n{2s_k}\log(1-Cs_k)\ge-nC.
\]
Consequently, there is $\eta>0$, independent of $k\in K$, such that
\begin{equation*}
 \int_{B_{r_k}^{\mathrm{ch}}(\omega_k)}|u_k|^{p_k}\,d\sigma\ge\eta
\end{equation*}
for all sufficiently large $k\in K$.
Since $M_k\to\infty$ along $K$, \eqref{eq:natural-radius} gives
$r_k\to0$. For every fixed $\rho>0$, comparison of chordal and
geodesic distances yields
\[
 B_{r_k}^{\mathrm{ch}}(\omega_k)
 \subset B_{2r_k}(\omega_k)\subset B_\rho(\omega_k)
\]
for all sufficiently large $k\in K$. Thus
\[
 \eta\le
 \limsup_{\substack{k\to\infty\\k\in K}}
 \sup_{\xi\in\Sn}\int_{B_\rho(\xi)}|u_k|^{p_k}\,d\sigma.
\]
Letting $\rho\to0^+$ contradicts Corollary \ref{cor:Hersch-no-atoms}.
This proves \eqref{eq:uniform-Linf}.
\end{proof}

\section{Proof of Theorem \ref{thm:main}}
\label{sec:attainment-proof}
The uniform $L^\infty$ bound restores strong $L^2$ compactness. It remains to pass
to the lower limit in  deficit of the fractional
Sobolev inequality
and to exclude convergence to the
optimizer manifold.

\begin{proposition}\label{prop:strong-L2}
Let $K\subset\mathbb N$ and $u\neq0$ be as in
Corollary \ref{cor:weak-nonvanishing} for the Hersch-normalized
sequence of normalized minimizers $\{u_k\}$. Then, as
$k\to\infty$, $k\in K$,
\[
u_k\longrightarrow u
\quad\text{strongly in }L^2(\Sn),
\qquad
\|u\|_2=1.
\]
Moreover,
\begin{equation}\label{eq:distance-limit}
\dist_{s_k}(u_k,\cM_{s_k})^2
\longrightarrow
\dist(u,\cM)^2.
\end{equation}
\end{proposition}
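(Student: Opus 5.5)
Strong convergence will come from combining the uniform bound \eqref{eq:uniform-Linf} with the weak convergence $u_k\weak u$ along $K$ and the fact that $p_k\to2$. The first target is $\|u_k\|_2^2\to1$. Set $M:=\sup_k\|u_k\|_\infty$. Then $|u_k|^{p_k}\le M^{p_k-2}u_k^2$ pointwise, so
\[
1=\|u_k\|_{p_k}^{p_k}\le M^{p_k-2}\|u_k\|_2^2 .
\]
Since $M^{p_k-2}\to1$, this gives $\liminf\|u_k\|_2^2\ge1$. In the other direction, H\"older on the probability space yields $\|u_k\|_2\le\|u_k\|_{p_k}=1$, hence $\|u_k\|_2\to1$.

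To get strong convergence I would show $\|u\|_2=1$; combined with the weak convergence this gives norm convergence and therefore strong $L^2$ convergence. The plan is to produce a uniform form-norm bound and then use the compact embedding $\cX\hookrightarrow L^2$. From the minimizing identity, $Q_k(u_k)=1+O(s_k)$, so
\[
\frac{Q_k(u_k)-\|u_k\|_2^2}{2s_k}
=\frac{Q_k(u_k)-1}{2s_k}+\frac{1-\|u_k\|_2^2}{2s_k}.
\]
The first term on the right is $O(1)$. The second needs control at order $s_k$, and this is where the $L^\infty$ bound separates energy from entropy. By convexity of $p\mapsto\log\|u\|_p^p$, for $|u_k|\le M$ we have
\[
0\le 1-\|u_k\|_2^{2}\le C(M)(p_k-2)\int u_k^2\bigl|\log u_k^2\bigr|\,d\sigma=O(s_k),
\]
because $x\mapsto x|\log x|$ is bounded on $[0,M^2]$. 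Next, $r_{s,\ell}-1\ge c\,s\,\Lambda_\ell$ uniformly for $\ell$ in any fixed range, and since $\Gamma(\ell+\frac n2+s)/\Gamma(\ell+\frac n2-s)\ge1$ when $\ell\ge1$, the quantity $(r_{s,\ell}-1)/(2s)$ dominates a truncated version of $\Lambda_\ell$. This gives, for each fixed $L$,
\[
\sum_{\ell\le L}\Lambda_\ell|u_{k,\ell}|^2\le C,
\]
and in fact it should give $\sum_{\ell\ge1}\min\{\Lambda_\ell, 1/s_k\}|u_{k,\ell}|^2\le C$. Since $\Lambda_\ell\to\infty$, the high-frequency $L^2$ tail is uniformly small. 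This is enough for precompactness in $L^2$ even without a full $\cX$ bound, since the modes $\ell$ with $\Lambda_\ell>1/s_k$ carry weight at most $Cs_k$. So $u_k\to u$ strongly, and $\|u\|_2=\lim\|u_k\|_2=1$. The main technical point is the elementary lower bound $(r_{s,\ell}-1)/(2s)\gtrsim\min\{\Lambda_\ell,1/s\}$, which I would check from the digamma integral representation of $\log r_{s,\ell}$.

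For \eqref{eq:distance-limit}, I would use \eqref{eq:ms-ds}, which gives $\dist_{s_k}(u_k,\cM_{s_k})^2=Q_k(u_k)-m_k(u_k)$. We already have $Q_k(u_k)\to1=\|u\|_2^2$. For the correlation term, Proposition \ref{prop:correlation-compactness} applies here: the non-concentration property holds by Corollary \ref{cor:Hersch-no-atoms}, and the weak limit $u$ is nonzero. It gives $m_k(u_k)\to m(u)$. Lemma \ref{lem:distance-formula} then yields
\[
\dist_{s_k}(u_k,\cM_{s_k})^2\to\|u\|_2^2-m(u)=\dist(u,\cM)^2 .
\]
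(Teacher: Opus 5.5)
Your proof is correct and follows the paper's route: you get an $O(s_k)$ bound on $1-\|u_k\|_2^2$ from the $L^\infty$ bound, a uniform spectral bound on $\frac{Q_k(u_k)-\|u_k\|_2^2}{2s_k}$ that yields $L^2$ compactness, and then Proposition \ref{prop:correlation-compactness} for the distance. The spectral estimate you flag as the main technical point actually holds in the stronger form $\frac{r_{s,\ell}-1}{2s}\ge\Lambda_\ell$ for all $\ell$ (Lemma \ref{lem:spectral-domination}, proved by Jensen's inequality for the convex function $t\mapsto\psi(\ell+\frac n2+t)-\psi(\frac n2+t)$ on $[-s,s]$), so you can replace the truncation at $1/s_k$ and your tail argument by a genuine $\cX$-bound and the compact embedding $\cX\hookrightarrow L^2$.
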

\begin{proof}
The uniform $L^\infty$ bound gives $\sup_k\|u_k\|_{2+\varepsilon}<\infty$
for some fixed $\varepsilon>0$. Since $p_k\to2$ and
$\|u_k\|_{p_k}=1$, the identity
$x^{p_k/2}-x=\int_1^{p_k/2}x^t\log x\,dt$, together with the
$L^{2+\varepsilon}$ bound, gives
\[
 1-\|u_k\|_2^2=O(s_k),\qquad \|u_k\|_2\to1.
\]
As $Q_k(u_k)\le1+Cs_k$, Lemma \ref{lem:spectral-domination} yields
\[
 \langle u_k,\cH u_k\rangle
 \le\frac{Q_k(u_k)-\|u_k\|_2^2}{2s_k}\le C.
\]
The compact embedding $\cX\hookrightarrow L^2$ and the weak limit  imply $u_k\to u$ strongly in $L^2$ and $\|u\|_2=1$.
Finally, $Q_k(u_k)\to1$ and Proposition
\ref{prop:correlation-compactness} gives $m_k(u_k)\to m(u)$, hence
\[
 \dist_{s_k}(u_k,\cM_{s_k})^2
 =Q_k(u_k)-m_k(u_k)
 \longrightarrow1-m(u)=\dist(u,\cM)^2.
\]
\end{proof}

\begin{proposition}
\label{prop:numerator-liminf}
Let $s_k\rightarrow 0^+$ and $p_k=\frac{2n}{n-2s_k}$.
Suppose that $\{u_k\}$ satisfies
\[
\|u_k\|_{p_k}=1,
\qquad
Q_k(u_k)\le1+Ks_k,
\qquad
\sup_k\|u_k\|_\infty<\infty,
\]
and that
$u_k\rightarrow u$  strongly in $L^2(\mathbb S^n)$,
$\|u\|_2=1$.
Then $u\in\mathcal X$ and
\[
\liminf_{k\to\infty}\frac{1}{2s_{k}}
\mathcal D_{s_k} (u_k)
\ge \mathcal D(u).
\]
\end{proposition}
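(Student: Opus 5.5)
We split $\frac{1}{2s_k}\cD_{s_k}(u_k)$ into an energy part and an entropy part:
\[
\frac{\cD_{s_k}(u_k)}{2s_k}
=\frac{Q_k(u_k)-\|u_k\|_2^2}{2s_k}
-\frac{\|u_k\|_{p_k}^2-\|u_k\|_2^2}{2s_k}.
\]
We then pass to the limit in each piece separately. The uniform $L^\infty$ bound is what decouples them.

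\textbf{Energy part.} On $\mathcal H_\ell$, the multiplier of $\frac{P_{s}-I}{2s}$ is $\frac{r_{s,\ell}-1}{2s}$. Since $\log r_{s,\ell}=\log\Gamma(\ell+\frac n2+s)-\log\Gamma(\ell+\frac n2-s)-(\text{same at }\ell=0)$ is odd in $s$ with derivative $2\Lambda_\ell$ at $s=0$, we get $\frac{r_{s,\ell}-1}{2s}\to\Lambda_\ell$ for each fixed $\ell$. By Lemma~\ref{lem:spectral-domination} (as already used in Proposition~\ref{prop:strong-L2}), this multiplier is nonnegative and dominates $\Lambda_\ell$, or at least some fixed fraction of it.

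It follows that $\langle u_k,\cH u_k\rangle\le C$. Since $u_k\to u$ in $L^2$, lower semicontinuity of the form gives $u\in\cX$. For the $\liminf$ we use Fatou termwise in $\ell$: each coefficient satisfies $|u_{k,\ell,m}|^2\to|u_{\ell,m}|^2$, and all terms are nonnegative, so
\[
\liminf_k\frac{Q_k(u_k)-\|u_k\|_2^2}{2s_k}\ \ge\ \sum_\ell\Lambda_\ell\sum_m|u_{\ell,m}|^2=\langle u,\cH u\rangle .
\]
This needs only pointwise convergence of the multipliers, not domination.

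\textbf{Entropy part.} Here we want an actual limit. Write $\|u_k\|_{p_k}^2=\exp\!\big(\frac{2}{p_k}\log\int|u_k|^{p_k}\big)$ and expand in $t=p_k-2=\frac{4s_k}{n-2s_k}\sim\frac{4s_k}{n}$:
\[
\int|u_k|^{p_k}\,\dd\sigma=\|u_k\|_2^2+\frac t2\int u_k^2\log u_k^2\,\dd\sigma+O(t^2),
\]
with the $O(t^2)$ uniform because $\sup_k\|u_k\|_\infty\le M$ bounds $x^{2+\tau}(\log x)^2$ on $[0,M]$.

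We use the constraint $\int|u_k|^{p_k}=1$ directly. It gives
\[
\|u_k\|_2^2=1-\frac t2\int u_k^2\log u_k^2+O(t^2),
\]
so
\[
\frac{\|u_k\|_{p_k}^2-\|u_k\|_2^2}{2s_k}=\frac{1-\|u_k\|_2^2}{2s_k}=\frac{t}{4s_k}\int u_k^2\log u_k^2+O(s_k)\ \longrightarrow\ \frac1n\int u^2\log u^2\,\dd\sigma .
\]
The limit of $\int u_k^2\log u_k^2$ follows from $L^2$ convergence plus the uniform $L^\infty$ bound: $x\mapsto x\log x$ is uniformly continuous on $[0,M^2]$, so the integrals converge, e.g.\ by Vitali along an a.e.\ subsequence, and the whole sequence converges since the limit is unique. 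Because $\|u\|_2=1$, the right-hand side is exactly the entropy term $\frac1n\int u^2\log\frac{u^2}{\|u\|_2^2}$ in $\cD(u)$. Adding the two parts yields $\liminf\frac{1}{2s_k}\cD_{s_k}(u_k)\ge\cD(u)$.

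\textbf{Main obstacle.} The delicate point is the energy $\liminf$: the multiplier monotonicity and positivity have to come from Lemma~\ref{lem:spectral-domination} rather than from a uniform-in-$\ell$ convergence, which fails because $r_{s,\ell}\sim\ell^{2s}$. If that lemma only provides domination, we would instead verify directly that $\frac{r_{s,\ell}-1}{2s}\ge0$ via log-convexity of $\Gamma$. This suffices, since Fatou needs only nonnegativity together with pointwise convergence.
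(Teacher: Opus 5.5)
Your proof is correct and follows the paper's argument essentially step for step. The paper makes the same split of $\frac{1}{2s_k}\cD_{s_k}(u_k)$ into $\frac{Q_k(u_k)-\|u_k\|_2^2}{2s_k}$ minus $\frac{1-\|u_k\|_2^2}{2s_k}$, the same uniform expansion of $|x|^{p_k}$ on $[0,M]$ giving $\frac{1-\|u_k\|_2^2}{2s_k}=\frac{1}{n-2s_k}\int u_k^2\log u_k^2\,\dd\sigma+O(s_k)$, and the same coefficientwise Fatou argument for the energy.

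The only cosmetic difference is in the Fatou step: you apply Fatou directly to the moving multipliers $\frac{r_{s_k,\ell}-1}{2s_k}\to\Lambda_\ell$, which needs only their nonnegativity. The paper instead first minorizes them by $\Lambda_\ell$ via Lemma~\ref{lem:spectral-domination} and then applies Fatou with fixed weights. Also, your bound $\langle u_k,\cH u_k\rangle\le C$ in the energy step implicitly uses the $O(1)$ bound on $\frac{1-\|u_k\|_2^2}{2s_k}$ from the entropy step, exactly as the paper does.
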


\begin{proof}
Strong $L^2$ convergence and the uniform $L^\infty$ bound give
$|u_k|^2\to |u|^2$ in $L^1$ and $0\le |u_k|^2,|u|^2\le C$ for some fixed
$C\ge1$.  Consequently,
$|u_k|^2\log |u_k|^2\to |u|^2\log |u|^2$ in $L^1$, with $0\log0=0$.
Uniformly for $0\le x\le C$ and $0\le\delta\le1$,
\[
x^{1+\delta}
=
x+\delta x\log x+O(\delta^2).
\]
Using $\|u_k\|_{p_k}=1$, we obtain
\begin{equation}\label{eq:entropy-moving}
\begin{aligned}
\frac{1-\|u_k\|_2^2}{2s_k}
&=
\frac1{n-2s_k}
\int_{\mathbb S^n}|u_k|^2\log |u_k|^2\,d\sigma
+O(s_k)\rightarrow
\frac1n\int_{\mathbb S^n}|u|^2\log |u|^2\,d\sigma.
\end{aligned}
\end{equation}
We next pass to the lower limit in the quadratic part.
The energy bound,  Lemma \ref{lem:spectral-domination} and \eqref{eq:entropy-moving} give
\[
0\le
 \sum_{\ell,m}\Lambda_\ell|(u_k)_{\ell,m}|^2
 \le \frac{Q_k(u_k)-\|u_k\|_2^2}{2s_k}
\le \frac K2+\frac{1-\|u_k\|_2^2}{2s_k}
=O(1)
\]
for all sufficiently large $k$.
Strong $L^2$ convergence implies convergence of every
spherical harmonic coefficient. Fatou lemma   gives
\[
\sum_{\ell,m}\Lambda_\ell|u_{\ell,m}|^2\le
\liminf_{k\to\infty} \sum_{\ell,m}\Lambda_\ell|(u_k)_{\ell,m}|^2
\le
\liminf_{k\to\infty} \frac{Q_k(u_k)-\|u_k\|_2^2}{2s_k}
<\infty.
\]
Thus $u\in\mathcal X$.
\medskip

Finally, the normalization implies
$\mathcal D_{s_k} (u_k)
=Q_k(u_k)-1$.
Combining the preceding estimates and using $\|u\|_2=1$,
we conclude that
\[
\liminf_{k\to\infty} \frac{1}{2s_{k}}\mathcal D_{s_k} (u_k)
\ge
\langle u,\mathcal H u\rangle
-\frac1n\int_{\mathbb S^n}|u|^2\log|u|^2\,d\sigma
=
\mathcal D(u).
\]
\end{proof}

The remaining case is convergence to $\cM$, where the limiting denominator
vanishes. The next proposition combines the uniform entropy expansion with
\eqref{eq:residual-bound} to recover the local one-bubble threshold.

\begin{proposition}
\label{prop:one-bubble}
Let $s_k\rightarrow 0^+$,  and  let $u_k$ be normalized
minimizers for $c_{s_k}$
satisfying $\sup\limits_k\|u_k\|_\infty< \infty$.
  If $u_k\rightarrow h\in\cM_{0,1}$  strongly in $L^2(\Sn)$,
then
\[ \liminf_{k\to\infty}c_{s_k}
 \geq\frac{2}{n+2}.
\]
\end{proposition}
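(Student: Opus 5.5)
The plan is to linearize the minimizers around the optimizer manifold and to use the residual bound \eqref{eq:residual-bound} together with the spectral gap of $P_{s_k}$ on the orthogonal complement of the tangent space. By conformal invariance, and since Hersch normalization plus $L^\infty$ bounds keep the limiting bubble fixed, we may apply a bounded family of conformal maps and a sign change. After this we may assume $h=\one$ and that the nearest fractional bubble to $u_k$ is $\pm h_{0,s_k}=\one$, possibly at the cost of working with an active bubble $h_{\xi_k,s_k}$ where $\xi_k\to0$. Proposition \ref{prop:correlation-compactness} keeps $\xi_k$ in a compact set, and strong convergence forces $\xi_k\to0$. We then write $u_k=a_k(\one+\varphi_k)$ with $\varphi_k$ orthogonal to $\one$, so that $\delta_k^2:=\dist_{s_k}(u_k,\cM_{s_k})^2\approx a_k^2Q_k(\varphi_k)\to0$. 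Orthogonality to the active bubble's tangent directions (degree $0$ and $1$ harmonics) follows from the first-order condition of the maximization in $m_k$, up to errors that are $o(\|\varphi_k\|)$ uniformly because $\|u_k\|_\infty$ is bounded.

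Next we expand the Euler--Lagrange equation \eqref{eq:EL-full} to first order in $\varphi_k$. Since $\|\varphi_k\|_\infty$ need not be small, we use instead the uniform bound $\|u_k\|_\infty\le C$ and the strong convergence $\varphi_k\to0$ in $L^2$. This gives
\[
|u_k|^{p_k-2}u_k=a_k^{p_k-1}\bigl(\one+(p_k-1)\varphi_k\bigr)+R_k,
\]
where $R_k$ is quadratic and $\|R_k\|_{H^{-s}_{P_{k}}}=o(s_k^{1/2}\|\varphi_k\|_{Q_k})$. Here we use $p_k-2=O(s_k)$, so that the nonlinearity's second derivative is $O(s_k)$ times a bounded quantity on bounded ranges. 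Projecting onto harmonics of degree $\ell\ge2$, the residual $P_k\varphi_k-(p_k-1)a_k^{p_k-2}\varphi_k$ then has $H^{-s}_{P_k}$-norm at least
\[
\Bigl(1-\frac{p_k-1}{r_{s_k,2}}+o(s_k)\Bigr)Q_k(\varphi_k)^{1/2}.
\]
Since $r_{s,2}=1+2s\bigl(\psi(2+\tfrac n2)-\psi(\tfrac n2)\bigr)+O(s^2)$ and $p_k-1=1+\tfrac{4s_k}{n}+O(s_k^2)$, the gap is
\[
2s_k\Bigl(\tfrac1{n/2}+\tfrac1{n/2+1}-\tfrac2n\Bigr)=\frac{4s_k}{n+2}+o(s_k).
\]
Combining this with \eqref{eq:residual-bound}, whose right side is $2s_kc_{s_k}\delta_k$, and dividing by $s_k\delta_k$ gives $2c_{s_k}\ge\frac{4}{n+2}-o(1)$, which is the claim. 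Degree-$0$ and degree-$1$ components must be shown to be $o(\delta_k)$ by the orthogonality above. The constant mode is also controlled by the normalization $\|u_k\|_{p_k}=1$.

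The main obstacle will be the remainder estimate in the nonlinearity. It must be $o(s_k\delta_k)$ in $H^{-s}$, which is precisely why the uniform entropy expansion and the $L^\infty$ bound are needed. I would try the pointwise bound
\[
\bigl||1+t|^{p-2}(1+t)-1-(p-1)t\bigr|\le C(p-2)\,t^2(1+|\log|t||)+Ct^2\mathbf 1_{|t|>1/2}\,,
\]
together with the duality $\|f\|_{H^{-s}}\le\|f\|_{p_k'}$. The difficulty is the set where $\varphi_k$ is not small: there $|t|\lesssim C$, and one needs its measure to be small compared with $s_k$. If that fails, I would instead compare directly via the quotient, expanding $\frac{\cD_{s_k}(u_k)}{2s_k}$ to second order using the entropy expansion \eqref{eq:entropy-moving}. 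This would reduce to the spectral inequality $\langle\varphi,\cH\varphi\rangle-\frac2n\|\varphi\|_2^2\ge\frac{2}{n+2}\|\varphi\|_2^2$ on degree $\ge2$, which holds since $\Lambda_2=\frac{2}{n}+\frac{2}{n+2}$. The remaining task would then be to handle the cubic entropy error via the $L^\infty$ bound and strong convergence.
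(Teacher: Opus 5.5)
There is a genuine gap, and it sits exactly at the step you flag as the main obstacle. Your proposed bound cannot close it. The spectral gap $1-\frac{p_k-1}{r_{s_k,2}}$ is only $\frac{4s_k}{n+2}+O(s_k^2)$, so you need $\norm{R_k}_{H^{-s}_{P_k}}=o(s_k\norm{\varphi_k}_2)$. The $o(s_k^{1/2}\ldots)$ claimed in your second paragraph is not enough. The term $Ct^2\mathbf 1_{\{|t|>1/2\}}$ in your pointwise bound carries no factor $p_k-2$, so it only yields $o(\norm{\varphi_k}_2)$, and nothing controls $|\{|\varphi_k|>\frac12\}|$ against $s_k$.

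That term is in fact spurious. The map $x\mapsto|x|^{p-2}x$ is linear at $p=2$, and $\partial_pR(t)=|1+t|^{p-2}(1+t)\log|1+t|-t=O(t^2)$ uniformly for $|t|\le M$, $2\le p\le3$. Hence $|R(t)|\le C_M(p-2)t^2$ on bounded ranges. Even so, this only gives $\norm{R_k}_{H^{-s}_{P_k}}\le\norm{R_k}_2\le Cs_k\norm{\varphi_k}_4^2$. The bound $\norm{\varphi_k}_4^2=o(\norm{\varphi_k}_2)$ does not follow from $\varphi_k\to0$ in $L^2$ plus an $L^\infty$ bound: a perturbation of height one on a set of measure comparable to $\norm{\varphi_k}_2^2$ violates it.

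The missing ingredient is $L^2$-compactness of the normalized perturbation $w_k=\varphi_k/\norm{\varphi_k}_2$. Uniform integrability of $w_k^2$ then gives $\int_{\Sn}\varphi_k^4\,d\sigma=o(\norm{\varphi_k}_2^2)$. The paper obtains this compactness as follows.
\begin{enumerate}
\item Test \eqref{eq:residual-bound} against $z_k=u_k-t_k$, where $t_k=\int_{\Sn}u_k\,d\sigma$.
\item Use that the nonlinearity is $(1+O(s_k))$-Lipschitz on the bounded range. This yields $\norm{z_k}_2^2\le Q_k(z_k)\le(1+Cs_k)\norm{z_k}_2^2$.
\item Lemma \ref{lem:spectral-domination} then gives $\ip{w_k}{\cH w_k}\le C$, and the compact embedding $\cX\hookrightarrow L^2$ applies.
\end{enumerate}
It also helps to move an active bubble exactly to $\one$, as the paper does. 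Then $\varphi_k\perp\mathcal H_0\oplus\mathcal H_1$ exactly, and no approximate orthogonality is needed.

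With these two repairs, your linearized argument does close, and it is a genuinely different route. The constant $\frac2n$ arises as $\lim\frac{p_k-2}{2s_k}$ from linearizing the nonlinearity, so no second-order entropy expansion is needed. The paper instead uses \eqref{eq:residual-bound} only for the compactness step and then expands the quotient itself. Writing $\eps_k=\norm{z_k}_2$ and $N_k=\norm{u_k}_2^2$, it uses
\begin{itemize}
\item $m_k(u_k)=t_k^2$ and $\dist_{s_k}(u_k,\cM_{s_k})^2=\eps_k^2Q_k(w_k)$;
\item spectral domination, $\frac{Q_k(w_k)-1}{2s_k}\ge\Lambda_2$ on degrees $\ge2$;
\item Lemma \ref{lem:uniform-entropy-expansion}, giving $\frac{1-N_k}{2s_k\eps_k^2}\to\frac2n$.
\end{itemize}
That is your fallback. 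However, \eqref{eq:entropy-moving} cannot be used there: its $O(s_k)$ error swamps the relevant scale, since $\eps_k^2$ may be much smaller than $s_k$. What is needed is an expansion with error $o(\eps_k^2)$, uniformly in the ratio $\eps_k^2/s_k$. That expansion rests on $0\le y^{1+\delta}-y-\delta y\log y\le C\delta^2y(\log y)^2$ and, once more, on the compactness of $w_k$.
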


\begin{proof}
Pass to a subsequence on which $c_{s_k}$ converges to its lower limit. Choose
active normalized bubbles $\widehat h_k$ with positive correlation. Since
$u_k\to h\in\cM_{0,1}$ in $L^2$, the uniform $L^\infty$ bound implies
$|u_k|^{p_k}\to h^2$ in $L^1$. After extraction, $\widehat h_k\to h$ in $L^2$. Applying the
corresponding conformal action and a sign change, we  assume
\[
 u_k\to1\quad\text{in }L^2,
 \qquad 1\in\mathcal A_{s_k}(u_k).
\]
Set
\[
 t_k=\int_{\Sn}u_k\,d\sigma,
 \qquad z_k=u_k-t_k,
 \qquad \varepsilon_k=\|z_k\|_2,
 \qquad N_k=\|u_k\|_2^2=t_k^2+\varepsilon_k^2.
\]
Then $t_k\to1$, $\varepsilon_k\to0$, and $\varepsilon_k>0$. Since $1$ is
active,
\begin{equation}\label{eq:local-distance}
 m_k(u_k)=t_k^2,
 \qquad Q_k(u_k)=t_k^2+Q_k(z_k),
 \qquad \dist_{s_k}(u_k,\cM_{s_k})^2=Q_k(z_k).
\end{equation}
Differentiating the active correlation at $\xi=0$ gives
$z_k\perp\mathcal H_0\oplus\mathcal H_1$.
 \eqref{eq:residual-bound} tested against $z_k$, gives
\[
 \left|Q_k(z_k)-\int_{\Sn}|u_k|^{p_k-2}u_kz_k\,d\sigma\right|
 \le2s_kc_{s_k}Q_k(z_k).
\]
Because $\|u_k\|_\infty$ is uniformly bounded, the map
$x\mapsto|x|^{p_k-2}x$ is $(1+O(s_k))$-Lipschitz on the relevant range.
Using $\int_{\Sn} z_k \,d\sigma=0$   yields
\[
 \varepsilon_k^2\le Q_k(z_k)\le(1+Cs_k)\varepsilon_k^2.
\]

Set
$w_k:=\frac{z_k}{\varepsilon_k}$,
\[
 \|w_k\|_2=1,
 \qquad w_k\perp\mathcal H_0\oplus\mathcal H_1,
 \qquad Q_k(w_k)\to1,
\]
and spectral domination gives
\[
 \Lambda_2\le\langle w_k,\cH w_k\rangle
 \le\frac{Q_k(w_k)-1}{2s_k}\le C.
\]
After extraction, $w_k\to w$ strongly in $L^2$.

Apply Lemma \ref{lem:uniform-entropy-expansion} to
$F_k=\frac{u_k^2}{N_k}$. Since
$\int_{\Sn} F_k^{\frac{p_k}{2}}\,d\sigma=N_k^{-\frac{p_k}{2}}$,
\begin{equation}\label{eq:N-second}
 \frac{1-N_k}{2s_k\varepsilon_k^2}\longrightarrow\frac2n.
\end{equation}
Using \eqref{eq:local-distance},
\[
 Q_k(u_k)=N_k+\varepsilon_k^2(Q_k(w_k)-1),
 \qquad
 \dist_{s_k}(u_k,\cM_{s_k})^2=\varepsilon_k^2Q_k(w_k).
\]
Hence
\[
 c_{s_k}
 =\frac{\dfrac{Q_k(w_k)-1}{2s_k}
       -\dfrac{1-N_k}{2s_k\varepsilon_k^2}}
       {Q_k(w_k)}
 \ge
 \frac{\Lambda_2-\dfrac{1-N_k}{2s_k\varepsilon_k^2}}
      {Q_k(w_k)}.
\]
Letting $k\to\infty$ and using \eqref{eq:N-second} gives
\[
 \liminf_{k\to\infty}c_{s_k}
 \ge\Lambda_2-\frac2n=\frac{2}{n+2}.
\]
\end{proof}

Together with the strict local gap \eqref{eq:strict-local-intro},
Proposition \ref{prop:one-bubble} excludes convergence to the optimizer manifold
whenever $\liminf\limits_{k\to\infty}c_{s_k}\le c_{\rm LS}$.

\begin{proof}[Proof of Theorem \ref{thm:main}]
Let $s_k\to0^+$   and pass to a subsequence such that
$c_{s_k}\to\liminf\limits_{s\to0^+}c_s$. Choose normalized minimizers $u_k$ and
apply Hersch normalization, by Corollary \ref{cor:sharp-recovery},
\begin{equation}\label{eq:liminf-upper}
 \liminf_{s\to0^+}c_s\le c_{\rm LS}.
\end{equation}
For all sufficiently large $k$, the strict moving gap
\eqref{eq:strict-moving-gap-intro} holds and $Q_k(u_k)\le1+Cs_k$.
Sections \ref{sec:localization}-\ref{sec:uniform-boundedness} and
Proposition \ref{prop:strong-L2} therefore give, after extraction,
\[
 u_k\longrightarrow u\quad\text{strongly in }L^2(\Sn),
 \qquad \|u\|_2=1,
\]
together with \eqref{eq:distance-limit}, Proposition
\ref{prop:numerator-liminf} gives $u\in\cX$.

If $u\in\cM$, then $u\in\cM_{0,1}$ and Proposition
\ref{prop:one-bubble} yields
$\liminf\limits_{s\to0^+}c_s\ge\frac2{n+2}$, contradicting
\eqref{eq:liminf-upper} and \eqref{eq:strict-local-intro}. Hence
$\dist(u,\cM)>0$, and the lower-limit inequality gives
\[
 \liminf_{s\to0^+}c_s
 \ge\frac{\cD(u)}{\dist(u,\cM)^2}
 \ge c_{\rm LS}.
\]
Together with \eqref{eq:liminf-upper}, this proves
\[
 \liminf_{s\to0^+}c_s=c_{\rm LS},
 \qquad
 c_{\rm LS}=\frac{\cD(u)}{\dist(u,\cM)^2},
\]
so $u$ is a minimizer. Since the original sequence $s_k\to0^+$ was
arbitrary, Corollary \ref{cor:sharp-recovery} also gives
$c_s\to c_{\rm LS}$.

Finally, let $s_k\to0^+$ and let $u_k$ be normalized fractional
minimizers for $c_{s_k}$, the same compactness argument yields a
subsequence $u_{k_j}$ that converges strongly in $L^2$.
The one-bubble bound shows that the limit does not belong to $\cM$.
The lower-limit estimate then shows that the limit is a normalized
minimizer for $c_{\rm LS}$.
 This proves  Theorem \ref{thm:main}.
\end{proof}

\appendix
% elsarticle includes "Appendix" in section references.  Theorem numbers
% retain the conventional form A.1 rather than "Appendix A.1".
\renewcommand{\thetheorem}{\Alph{section}.\arabic{theorem}}

\section{Approximation and recovery}
\label{app:approximation}
\renewcommand{\theequation}{\Alph{section}.\arabic{equation}}
We collect the approximation results used in Sections \ref{sec:prelim} and \ref{sec:localization}.

\begin{proof}[Proof of Lemma \ref{lem:smooth-recovery-density}]
If $u=0$, take $v_j=0$. Hence assume $u\ne0$.
Let $S_t=e^{t\Delta_{\mathbb S^n}}$ and set $v_t=S_tu$.
Then $v_t\in C^\infty(\mathbb S^n)$ for $t>0$.
As $t\rightarrow 0^+$,
the spherical harmonic expansion gives
\[
 \|v_t-u\|_{\mathcal{X}}^2
 =
 \sum_{\ell,m}
 (1+\Lambda_\ell)
 |e^{-t\ell(\ell+n-1)}-1|^2|u_{\ell,m}|^2 \rightarrow 0
\]
by dominated convergence.
 Define
\[
\Phi(x)=
\begin{cases}
x^{2} \log x^{2}, & x \neq 0,\\
0, & x=0,
\end{cases}
\]
 and let
$\Phi_\pm=\max\{\pm\Phi,0\}$.
By Theorem A and the bound $\Phi_-\le e^{-1}$,
we have $\Phi(u)\in L^1(\mathbb S^n)$.
The function $\Phi_+$ is convex, so Jensen inequality
for the heat semigroup and preservation of the integral give
\[
 \int_{\mathbb S^n}\Phi_+(v_t)\,d\sigma
 \le
 \int_{\mathbb S^n}S_t\bigl(\Phi_+(u)\bigr)\,d\sigma
 =
 \int_{\mathbb S^n}\Phi_+(u)\,d\sigma.
\]
Since $v_t\to u$ in measure, Fatou lemma yields
\[
 \int_{\mathbb S^n}\Phi_+(u)\,d\sigma
 \le
 \liminf_{t\rightarrow 0^+}
 \int_{\mathbb S^n}\Phi_+(v_t)\,d\sigma.
\]
Thus the positive-part integrals converge.
The function $\Phi_-$ is continuous and bounded, so
$\Phi_-(v_t)\to\Phi_-(u)$ in $L^1(\mathbb S^n)$.
Consequently,
\[
 \int_{\mathbb S^n}\Phi(v_t)\,d\sigma
 \longrightarrow
 \int_{\mathbb S^n}\Phi(u)\,d\sigma.
\]
Using $\|v_t\|_2\to\|u\|_2$ and
\[
 \int_{\mathbb S^n}
 f^2\log\frac{f^2}{\|f\|_2^2}\,d\sigma
 =
 \int_{\mathbb S^n}\Phi(f)\,d\sigma
 -\|f\|_2^2\log\|f\|_2^2,
\]
we obtain convergence of the normalized entropy.
Taking any sequence $t_j\rightarrow0^+$ and setting
$v_j=S_{t_j}u$ proves the form and entropy convergence,
and hence $\mathcal{D}(v_j)\to \mathcal{D}(u)$.
\medskip

It remains to compare the distances and the quotients.
The distance function is $1$-Lipschitz,
\[
 \bigl|
 \operatorname{dist}(v_j,\cM)
 -
 \operatorname{dist}(u,\cM)
 \bigr|
 \le \|v_j-u\|_2\longrightarrow0.
\]
If $u\notin\cM$, then
$\operatorname{dist}(u,\cM)>0$.
Indeed, otherwise Lemma \ref{lem:distance-formula} would give an attained
correlation $m(u)=\|u\|_2^2>0$, and equality in
Cauchy-Schwarz inequality
implies $u\in\cM$.
After discarding finitely many terms, therefore,
$v_j\notin\cM$ and
\[
 \frac{\mathcal{D}(v_j)}{\operatorname{dist}(v_j,\cM)^2}
 \longrightarrow
 \frac{\mathcal D(u)}{\operatorname{dist}(u,\cM)^2}.
\]
Taking infima, together with the inclusion
$C^\infty(\mathbb S^n)\setminus\cM
\subset \mathcal{X} \setminus\cM$, proves
\eqref{eq:smooth-quotient-density}.
\end{proof}
\medskip

We next prove a fixed-order density result.
It extends the localization estimates from compactly supported cylinder representatives
to general functions in the form domain.

\begin{lemma}
\label{lem:cylinder-compact-core}
For fixed $0<s<\min\{1,\frac{n}{2}\}$, smooth functions on $\mathbb S^n$
vanishing near the two prescribed poles are dense in the form norm
\[
\bigl(Q_s(w)+\|w\|_2^2\bigr)^{\frac{1}{2}}.
\]
Equivalently, their conformal-cylinder representatives form a core
for $Q_s$.
\end{lemma}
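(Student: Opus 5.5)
The plan is to cut off near the poles by a logarithmic cutoff and then smooth, using the Aronszajn--Smith representation \eqref{eq:AS-representation}. The key input is that for $0<s<\min\{1,\frac n2\}$ a point has zero $H^s$-capacity, because $2s<n$.

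First I reduce to smooth $w$. The heat semigroup $S_t w$ is smooth and converges to $w$ in the form norm. This follows from dominated convergence in the spectral expansion with multipliers $r_{s,\ell}$, exactly as in the proof of Lemma \ref{lem:smooth-recovery-density}. So it suffices to approximate a fixed $w\in C^\infty(\Sn)$.

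Let $N,S$ be the two poles. For small $\delta>0$ I take a Lipschitz cutoff $\phi_\delta$ with $0\le\phi_\delta\le1$, equal to $0$ on $B_{\delta^2}(N)\cup B_{\delta^2}(S)$ and equal to $1$ outside $B_\delta(N)\cup B_\delta(S)$. In between it is the log-profile $\phi_\delta=\log(d/\delta^2)/\log(1/\delta)$, where $d$ is the distance to the nearer pole. A plain linear cutoff on $B_{2\delta}\setminus B_\delta$ would also work, since $2s<n$ makes the $\dot H^s$ energy of the cutoff scale like $\delta^{n-2s}\to0$; I use whichever is cleaner. Then $\|w-\phi_\delta w\|_2\le\|w\|_\infty\,\sigma(B_\delta)^{1/2}\to0$.

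For the nonlocal part I write $(1-\phi_\delta)w=\psi_\delta w$ and expand
\[
\psi_\delta w(\omega)-\psi_\delta w(\eta)=\psi_\delta(\omega)\bigl(w(\omega)-w(\eta)\bigr)+w(\eta)\bigl(\psi_\delta(\omega)-\psi_\delta(\eta)\bigr).
\]
The first term has double integral bounded by $\int\psi_\delta(\omega)^2\int|w(\omega)-w(\eta)|^2|\omega-\eta|^{-n-2s}\,d\sigma(\eta)\,d\sigma(\omega)$. The inner integral is bounded by $C\|w\|_{C^1}^2$ because $s<1$. This term then tends to $0$ by dominated convergence, since $\psi_\delta\to0$ a.e. The second term is at most $\|w\|_\infty^2$ times the Gagliardo seminorm $[\psi_\delta]_{s}^2$.

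The main obstacle is showing $[\psi_\delta]_s\to0$. I would argue locally in normal coordinates near each pole, comparing with $\R^n$. Take the linear cutoff $\psi(x)=\chi(x/\delta)$ with $\chi\in C_c^\infty(B_2)$ and $\chi=1$ on $B_1$. Scaling gives $[\psi]_{\dot H^s(\R^n)}^2=\delta^{n-2s}[\chi]^2_{\dot H^s}\to0$. The spherical kernel $|\omega-\eta|^{-n-2s}$ is comparable to the Euclidean one on a fixed chart. Contributions where one point lies outside the chart are $O(\sigma(B_{2\delta}))$, because $\psi_\delta$ vanishes off $B_{2\delta}$ and the kernel is bounded there. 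Thus $Q_s(w-\phi_\delta w)\to0$.

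The approximants $\phi_\delta w$ are Lipschitz and vanish near the poles. A final mollification supported away from the poles makes them smooth; this converges in $H^s$ since $s<1$ and Lipschitz functions lie in $H^1\subset H^s$. Finally, conformal cylinder representatives of such functions are compactly supported in $t$. This gives the stated core property, because the unitary conformal transform intertwines the forms.
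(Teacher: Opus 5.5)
Your argument is correct and follows the same strategy as the paper: reduce to smooth functions, cut off at scale $\delta$ around the two poles, and use $2s<n$ to make the removed piece small in the form norm. The only difference is how that piece is estimated. You work directly in the Aronszajn--Smith form, with a Leibniz split and the scaling $[\chi(\cdot/\delta)]_{\dot H^s(\R^n)}^2=\delta^{n-2s}[\chi]_{\dot H^s(\R^n)}^2$ in charts. The paper instead uses the equivalence of the form norm with the spectral $H^s$ norm and the interpolation $\|f\rho_\varepsilon\|_{H^s}\le\|f\rho_\varepsilon\|_2^{1-s}\|f\rho_\varepsilon\|_{H^1}^s\le C\varepsilon^{\frac n2-s}$, which is shorter and needs no chart comparison. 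With your smooth profile $\chi$, neither the logarithmic cutoff nor the final mollification is needed.
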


\begin{proof}
The Gamma-ratio asymptotics in \eqref{eq:frac-eigen} give,
for fixed $s$,
\[ c_{n,s}(1+\ell)^{2s}
 \le 1+r_{s,\ell}
 \le C_{n,s}(1+\ell)^{2s}.\]
Since the eigenvalues of $-\Delta_{\mathbb S^n}$ are
$\ell(\ell+n-1)$, this form norm
 is equivalent to the usual
$H^s(\mathbb S^n)$ norm. Finite sums of spherical harmonics
are therefore dense in the form domain, so it suffices to
approximate a fixed $f\in C^\infty(\mathbb S^n)$.
\medskip

Let $P_\pm$ be the prescribed poles. For sufficiently small
$\varepsilon>0$, choose smooth cutoffs
$\rho_{\varepsilon,\pm}$ with disjoint supports such that
\[
 0\le\rho_{\varepsilon,\pm}\le1,
 \qquad
 \rho_{\varepsilon,\pm}=1
 \quad\text{on }B_\varepsilon(P_\pm),
\]
\[
 \operatorname{supp}\rho_{\varepsilon,\pm}
 \subset B_{2\varepsilon}(P_\pm),
 \qquad
 \|\nabla\rho_{\varepsilon,\pm}\|_\infty
 \le C\varepsilon^{-1}.
\]
Set $g_{\varepsilon,\pm}=f\rho_{\varepsilon,\pm}$.
The support and gradient bounds imply
\[
 \|g_{\varepsilon,\pm}\|_2^2
 \le C_f\varepsilon^n,
 \qquad
 \|g_{\varepsilon,\pm}\|_{H^1}^2
 \le C_f\varepsilon^{n-2}.
\]
Since $0<s<1$, interpolation in the spherical harmonic
expansion gives
\[
 \|g_{\varepsilon,\pm}\|_{H^s}
 \le
 \|g_{\varepsilon,\pm}\|_2^{1-s}
 \|g_{\varepsilon,\pm}\|_{H^1}^{s}
 \le C_f\varepsilon^{\frac{n}{2}-s}
 \longrightarrow0.
\]

Define
$ f_\varepsilon
 :=f(1-\rho_{\varepsilon,+})(1-\rho_{\varepsilon,-})$.
The cutoff supports are disjoint, so
$f-f_\varepsilon=g_{\varepsilon,+}+g_{\varepsilon,-}$.
Using the equivalence of the form norm and the $H^s$ norm, we obtain
\[
\begin{aligned}
\bigl(
 Q_s(f-f_\varepsilon)+\|f-f_\varepsilon\|_2^2
\bigr)^{\frac12}
&\le
C_{n,s}\|f-f_\varepsilon\|_{H^s}\le
C_{f,n,s}\varepsilon^{\frac n2-s}
\longrightarrow0.
\end{aligned}
\]
This proves the density claim.
 By the cylinder transform, these are
precisely the smooth functions with compactly supported
cylinder representatives.
\end{proof}

\section{Discrete mixing inequalities and local entropy expansion}
\label{app:entropy}

We record the mixing inequality and the local entropy expansion used in the compactness argument.

\begin{lemma}\label{lem:fractional-mixing}
Let $0<\alpha<1$, $\{a_j\}$ be a finite or countable probability vector, and set
$ S_\alpha:=\sum\limits_ja_j^\alpha$,  $A:=\sup\limits_ja_j$.
Then
\begin{equation}\label{eq:fractional-mixing}
 S_\alpha-1\geq(2-2^\alpha)(S_\alpha-A^\alpha).
\end{equation}
If $0\leq q_j\leq a_j^\alpha$, then
\begin{equation}\label{eq:fractional-mixing-q}
 S_\alpha-1\geq(2-2^\alpha)\left(\sum_jq_j-\sup_jq_j\right).
\end{equation}
\end{lemma}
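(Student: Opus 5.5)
The plan is to reduce \eqref{eq:fractional-mixing} to an elementary one-variable inequality in $A$, and then derive \eqref{eq:fractional-mixing-q} from it by comparing index sets.

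\textbf{Reduction.} Write $c:=2-2^\alpha\in(0,1)$. Since $\sum_j a_j=1$, the sequence $a_j\to0$ in the countable case, so $A$ is attained at some index $i$. If $S_\alpha=\infty$, then $S_\alpha-A^\alpha=\infty$ as well, so the left side of \eqref{eq:fractional-mixing} is $+\infty$ and the inequality holds trivially. Assume $S_\alpha<\infty$. Rearranging \eqref{eq:fractional-mixing} gives the equivalent form
\[
 A^\alpha+(2^\alpha-1)\sum_{j\ne i}a_j^\alpha\ \ge\ 1 .
\]
Set $m:=\min\{A,1-A\}$. Every $a_j$ with $j\ne i$ satisfies $a_j\le A$, and also $a_j\le\sum_{l\ne i}a_l=1-A$, so $a_j\le m$. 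Since $x\mapsto x^{\alpha-1}$ is decreasing, we get $a_j^\alpha\ge a_j\,m^{\alpha-1}$ for $j\neq i$. Summing over $j\neq i$ gives $\sum_{j\ne i}a_j^\alpha\ge(1-A)m^{\alpha-1}$. The edge case $A=1$ is trivial, because then $A^\alpha=1$. It therefore suffices to prove
\[
 g(A):=A^\alpha+(2^\alpha-1)(1-A)m^{\alpha-1}\ \ge\ 1,\qquad 0<A<1 .
\]

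\textbf{The one-variable inequality.} I will treat two cases.
\begin{itemize}
 \item For $A\ge\frac12$, we have $m=1-A$, so $g(A)=A^\alpha+(2^\alpha-1)(1-A)^\alpha$. This is a concave function of $A$. At $A=\frac12$ it equals $2^{-\alpha}+(2^\alpha-1)2^{-\alpha}=1$, and at $A=1$ it equals $1$. By concavity, $g\ge1$ on $[\frac12,1]$.
 \item For $A<\frac12$, we have $m=A$, so $g(A)=A^{\alpha-1}\bigl(2^\alpha-1+cA\bigr)$. Again $g(\frac12)=1$, so it suffices to show that $g$ is nonincreasing on $(0,\frac12]$. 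Computing $A\,g'(A)/g(A)$ gives
\[
 (\alpha-1)+\frac{cA}{2^\alpha-1+cA}.
\]
 The second term is increasing in $A$, so it is at most its value at $A=\frac12$, namely $2^{1-\alpha}-1$. The inequality $2^{x}-1\le x$ holds for $x\in[0,1]$ by convexity, since both sides agree at $x=0$ and $x=1$. Applying it with $x=1-\alpha$ shows that $A\,g'(A)/g(A)\le0$.
\end{itemize}
This case analysis is the only step with real content. The main point is to choose the pointwise lower bound $a_j^\alpha\ge a_j m^{\alpha-1}$ with the correct cap $m=\min\{A,1-A\}$. With that choice, both regimes are tight exactly at $A=\frac12$, which corresponds to two equal masses. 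This matches the two-bubble constant $2-2^\alpha$.

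\textbf{The $q$-version.} If all $q_j$ vanish, the right side of \eqref{eq:fractional-mixing-q} is zero and there is nothing to prove. Otherwise $q_j\le a_j^\alpha\to0$, so $\sup_j q_j$ is attained at some index $j^*$. Let $i$ again be an index with $a_i=A$. Since $q_{j^*}\ge q_i$, we get
\[
 \sum_jq_j-\sup_jq_j\ \le\ \sum_jq_j-q_i=\sum_{j\ne i}q_j\ \le\ \sum_{j\ne i}a_j^\alpha=S_\alpha-A^\alpha .
\]
Combining this with \eqref{eq:fractional-mixing} yields \eqref{eq:fractional-mixing-q}.
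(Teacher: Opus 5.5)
Your proof is correct and follows essentially the same route as the paper. You use the same reformulation $A^\alpha+(2^\alpha-1)(S_\alpha-A^\alpha)\ge1$, the same split at $A=\frac12$ with the concave-endpoint argument on $[\frac12,1]$, the same bound $S_\alpha-A^\alpha\ge(1-A)A^{\alpha-1}$ for $A<\frac12$, and the same index comparison for the $q$-version. The only difference is in the case $A<\frac12$: you check $A^{\alpha-1}\bigl(2^\alpha-1+(2-2^\alpha)A\bigr)\ge1$ by a logarithmic-derivative monotonicity argument, while the paper gets it in one line from concavity of $x\mapsto x^\alpha$, whose graph lies below the secant through $x=1,2$ for $x=A^{-1}\ge2$.
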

\begin{proof}
Both inequalities are immediate in the extended sense if
$S_\alpha=+\infty$. Hence assume $S_\alpha<\infty$.
Since $\{a_j\}$ is a probability vector, its largest mass
is attained, choose $j_0$ with $a_{j_0}=A$.
Let $d_\alpha=2^\alpha-1\in(0,1)$,
 \eqref{eq:fractional-mixing} is equivalent to
\[
 d_\alpha S_\alpha+(1-d_\alpha)A^\alpha\ge1.
\]
Suppose first that $A\ge\frac12$.
Subadditivity of $x^\alpha$ gives
\[
 S_\alpha-A^\alpha
 =
 \sum_{j\ne j_0}a_j^\alpha
 \ge (1-A)^\alpha.
\]
Since the function
$x^\alpha+d_\alpha(1-x)^\alpha$ is strictly concave
on $[\frac12,1]$ and the two endpoint values,
its values  are at least $1$ throughout the interval.
Then we deduce that
\[
 d_\alpha S_\alpha+(1-d_\alpha)A^\alpha
 \ge A^\alpha+d_\alpha(1-A)^\alpha
 \ge1.
\]

Now suppose that $A<\frac12$.
Since $a_j^\alpha\ge A^{\alpha-1}a_j$, we have
$S_\alpha\ge A^{\alpha-1}$.
Set $x=A^{-1}\ge2$,
then
\[
 x^\alpha
 \le 1+(x-1)(2^\alpha-1)
 =1+d_\alpha(x-1),
\]
with strict inequality for $x>2$. Hence
\[
 d_\alpha S_\alpha+(1-d_\alpha)A^\alpha
 \ge
 A^\alpha\bigl[1+d_\alpha(x-1)\bigr]
 \ge1.
\]
Finally, if $0\le q_j\le a_j^\alpha$, then
\[
 \sum_jq_j-\sup_jq_j
 \le
 \sum_{j\ne j_0}q_j
 \le
 \sum_{j\ne j_0}a_j^\alpha
 =
 S_\alpha-A^\alpha.
\]
Combining this with \eqref{eq:fractional-mixing} proves \eqref{eq:fractional-mixing-q}.
\end{proof}

\begin{remark}
For a finite probability vector, dividing
\eqref{eq:fractional-mixing} by $1-\alpha$ and letting
$\alpha\to1^-$ gives
\[
-\sum_j a_j\log a_j\ge 2\log2\,(1-A).
\]
For a countable probability vector, set
\[
b_N:=\sum_{j>N}a_j,
\qquad
A_N:=\max\{a_1,\ldots,a_N,b_N\}.
\]
Applying the finite-dimensional inequality to
$(a_1,\ldots,a_N,b_N)$ yields
\[
-\sum_{j=1}^N a_j\log a_j-b_N\log b_N
\ge 2\log2\,(1-A_N).
\]
Since $b_N\to0$, $-b_N\log b_N\to0$, and
$A_N\to A:=\sup_j a_j$, while the partial entropy sums increase to
the full entropy, letting $N\to\infty$ gives
\[
-\sum_j a_j\log a_j\ge 2\log2\,(1-A).
\]
Here $0\log0:=0$.
\end{remark}

We now prove the expansion at the perturbation scale, without a rate condition
between $\delta_k$ and $\varepsilon_k$.

\begin{lemma}
\label{lem:uniform-entropy-expansion}
Let $t_k\to1$, $\eps_k\rightarrow 0^+$, and   $w_k\to w$ strongly in
$L^2(\Sn)$, with $\int_{\Sn} w_k\dd\sigma=0$ and $\norm{w_k}_2=1$.  Assume
that
\[
 u_k=t_k+\eps_kw_k,
 \qquad \sup_k\norm{u_k}_\infty<\infty,
 \qquad N_k=\norm{u_k}_2^2=t_k^2+\eps_k^2.
\]
If $\delta_k\rightarrow 0^+$, $F_k:=
\frac{u_k^2}{N_k}$,
then
\begin{equation}\label{eq:Renyi-expansion-lemma}
\frac1{\delta_k}\log\int_{\Sn}F_k^{1+\delta_k}\dd\sigma
=2\eps_k^2+o(\eps_k^2).
\end{equation}
\end{lemma}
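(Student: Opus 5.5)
Since $\int F_k\,d\sigma=1$, we have $\int F_k^{1+\delta_k}d\sigma\ge1$, and the left side of \eqref{eq:Renyi-expansion-lemma} is a R\'enyi-type entropy of $F_k$. Write $F_k=1+g_k$ with $\int g_k\,d\sigma=0$. Explicitly,
\[
 g_k=\frac{u_k^2-N_k}{N_k}=\frac{2t_k\eps_kw_k+\eps_k^2(w_k^2-1)}{N_k}.
\]
The uniform $L^\infty$ bound on $u_k$ gives $\|g_k\|_\infty\le C$ and $F_k\ge0$. Moreover $\|g_k\|_2^2=4\eps_k^2+o(\eps_k^2)$. Indeed, the cross term is $\int w_k(w_k^2-1)\eps_k^3$, which is bounded by $\eps_k^2\cdot\eps_k\|w_k\|_3^3$. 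Note $\eps_k\|w_k\|_\infty\le C$, so $\eps_k^3\int|w_k|^3\le C\eps_k^2\int w_k^2=C\eps_k^2$, which is only $O(\eps_k^2)$ and not small. This forces a more careful treatment, explained below.

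The plan is a pointwise Taylor expansion with an integral remainder. Set $\phi_\delta(x)=x^{1+\delta}$. Then
\[
 \phi_\delta(1+g)=1+(1+\delta)g+\delta(1+\delta)\int_0^1(1-\tau)(1+\tau g)^{\delta-1}\,d\tau\; g^2 .
\]
Since $\int g_k=0$, this gives
\[
 \int F_k^{1+\delta_k}d\sigma=1+\delta_k(1+\delta_k)\int R_k\,g_k^2\,d\sigma,
 \qquad R_k=\int_0^1(1-\tau)(1+\tau g_k)^{\delta_k-1}d\tau .
\]
Because $\log(1+y)=y+O(y^2)$ and the correction term is $O(\delta_k)$, it suffices to show $\int R_kg_k^2\,d\sigma=2\eps_k^2+o(\eps_k^2)$. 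The naive limit $R_k\to\frac12$ fails where $1+\tau g_k$ is near $0$, that is, where $u_k$ is near zero. Equivalently, one can write $\int F^{1+\delta}=1+\delta\int F\log F+O(\delta^2)$ and reduce to the relative entropy $\int F_k\log F_k\,d\sigma\approx 2\eps_k^2$. This is the cleaner route, and I would take it.

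The key steps are as follows. \emph{(i)} Uniformly for $0\le x\le C$, $x^{1+\delta}=x+\delta x\log x+\delta^2 r_\delta(x)$, with $|r_\delta(x)|\le C\,x(\log x)^2$ (interpolating the exponential series). Crucially, $x(\log x)^2$ is comparable to $(x-1)^2$ near $x=1$ and bounded elsewhere, so it is dominated by $C(x-1)^2$ on $[0,C]$ up to a constant, hence by $C\frac{(x-1)^2}{1+x}$-type quantities. Thus $\int|r_{\delta_k}(F_k)|\le C\|g_k\|_2^2=O(\eps_k^2)$, and after dividing by $\delta_k$ this term contributes $O(\delta_k\eps_k^2)=o(\eps_k^2)$. \emph{(ii)} Compute the relative entropy $\int F_k\log F_k\,d\sigma$. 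Using $x\log x-(x-1)=\frac{(x-1)^2}{2}\cdot\theta(x)$ and the identity $F_k=(u_k/\sqrt{N_k})^2$, write $F_k\log F_k=2v^2\log v$ with $v=u_k/\sqrt{N_k}=1+\eta_k$, where $\eta_k=(t_k-\sqrt{N_k})/\sqrt{N_k}+\eps_kw_k/\sqrt{N_k}$. The function $\psi(v)=2v^2\log v-(v^2-1)$ satisfies $\psi(v)=2(v-1)^2+O(|v-1|^3)$ and $|\psi(v)|\le C(v-1)^2$ on bounded sets, including near $v\le0$ (here $v^2\log|v|$ is used).

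\emph{(iii)} The main obstacle is controlling the cubic remainder without a rate condition. Fix $M$ and split into $\{|\eps_kw_k|\le M\sqrt{\eps_k}\}$ and its complement. On the first set the cubic error is $O(\sqrt{\eps_k})\eps_k^2\int w_k^2$. On the complement, $\int_{|w_k|>M\eps_k^{-1/2}}\eps_k^2w_k^2\to o(\eps_k^2)$, because $w_k\to w$ strongly in $L^2$ makes $\{w_k^2\}$ uniformly integrable and the set has vanishing measure. This yields
\[
 \int F_k\log F_k\,d\sigma=2\int\eta_k^2+o(\eps_k^2)=2\eps_k^2+o(\eps_k^2),
\]
since $t_k-\sqrt{N_k}=O(\eps_k^2)$ and $\int w_k^2=1$. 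Combining (i)–(iii), $\int F_k^{1+\delta_k}=1+\delta_k(2\eps_k^2+o(\eps_k^2))$, and taking $\frac1{\delta_k}\log$ gives \eqref{eq:Renyi-expansion-lemma}. This uniform integrability step is exactly where strong $L^2$ convergence of $w_k$ replaces any assumed relation between $\delta_k$ and $\eps_k$.
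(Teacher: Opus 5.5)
Your proof is correct and follows essentially the paper's route. You reduce the R\'enyi quantity to the relative entropy via $x^{1+\delta}=x+\delta x\log x+O(\delta^2x(\log x)^2)$ with $x(\log x)^2\le C(x-1)^2$ on bounded sets. You then obtain $\int_{\Sn}F_k\log F_k\,d\sigma=2\eps_k^2+o(\eps_k^2)$ from a second-order Taylor expansion, whose remainder on $\{|\eps_kw_k|\ \text{large}\}$ is controlled by uniform integrability of $w_k^2$. The paper instead expands $u^2\log u^2$ about $t_k$, subtracts $N_k\log N_k$, and uses a fixed threshold $\eta\to0$; expanding $2v^2\log|v|-(v^2-1)$ about $v=1$ with threshold $M\sqrt{\eps_k}$ is only a cosmetic variant.

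One presentational point: only $\|F_k-1\|_2^2=O(\eps_k^2)$ is ever used, and this follows from $\eps_k|w_k|\le C$. So the asymptotic $4\eps_k^2+o(\eps_k^2)$ that you assert and then partly retract in your first paragraph is unnecessary and can be dropped.
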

\begin{proof}
Set $z_k:=\varepsilon_kw_k$,
then $N_k\to1$, $\int_{\mathbb S^n} F_k\,d\sigma=1$, and $\{F_k\}$ is
uniformly bounded.
Let $\Phi(x)=x^2\log x^2$, with $\Phi(0)=0$, and define
\[
\mathcal R_k
:=
\Phi(t_k+z_k)-\Phi(t_k)-\Phi'(t_k)z_k
-\frac12\Phi''(t_k)z_k^2.
\]
For each fixed $0<\eta<\frac 14$, Taylor expansion  away from zero
and the uniform boundedness of $u_k$ give
\[
|\mathcal R_k|
\le
\begin{cases}
\omega(\eta)z_k^2, & |z_k|\le\eta,\\
C_\eta z_k^2, & |z_k|>\eta,
\end{cases}
\qquad
\omega(\eta)\to0.
\]
Since $w_k\to w$ strongly in $L^2$, the sequence $\{w_k^2\}$
is uniformly integrable. Hence
\[
\int_{\{|z_k|>\eta\}}z_k^2\,d\sigma
=
\varepsilon_k^2
\int_{\{|w_k|>\eta/\varepsilon_k\}}w_k^2\,d\sigma
=
o(\varepsilon_k^2).
\]
Letting first $k\to\infty$ and then $\eta\rightarrow 0^+$ yields
$\int_{\Sn}|\mathcal R_k|\,d\sigma=o(\varepsilon_k^2)$.
Using $\int_{\Sn} z_k\,d\sigma=0$,
$\int_{\Sn} z_k^2\,d\sigma=\varepsilon_k^2$, and
$\frac{1}{2}\Phi''(t)=\log t^2+3$, we obtain
\[
\int_{\Sn}\Phi(u_k)\,d\sigma
=
t_k^2\log t_k^2
+(\log t_k^2+3)\varepsilon_k^2
+o(\varepsilon_k^2).
\]
Also,
\[
N_k\log N_k
=
t_k^2\log t_k^2
+(\log t_k^2+1)\varepsilon_k^2
+O(\varepsilon_k^4).
\]
Subtracting gives
\[
\int_{\Sn} u_k^2\log\frac{u_k^2}{N_k}\,d\sigma
=
2\varepsilon_k^2+o(\varepsilon_k^2),
\]
therefore
\[
\int_{\Sn} F_k\log F_k\,d\sigma
=
2\varepsilon_k^2+o(\varepsilon_k^2).
\]

For bounded $y\ge0$, one has
$y(\log y)^2\le C(\sqrt y-1)^2$,
because the ratio extends continuously at $y=0$ and $y=1$.
Consequently,
\[
\begin{aligned}
\int_{\Sn} F_k(\log F_k)^2\,d\sigma&\le
\frac C{N_k}
\int_{\Sn}\bigl(|u_k|-\sqrt{N_k}\bigr)^2\,d\sigma\le
\frac C{N_k}
\left[
\varepsilon_k^2+(t_k-\sqrt{N_k})^2
\right]
=
O(\varepsilon_k^2),
\end{aligned}
\]
where
$\sqrt{N_k}-t_k
=\frac{\varepsilon_k^2}{\sqrt{N_k}+t_k}=O(\varepsilon_k^2)$.

Finally, for bounded $y\ge0$ and $0<\delta<\frac{1}{2}$,
Taylor formula with integral remainder gives
\[
0\le y^{1+\delta}-y-\delta y\log y
\le C\delta^2y(\log y)^2.
\]
Thus
\[
\int_{\Sn} F_k^{1+\delta_k}\,d\sigma
\le1+\delta_k  \int_{\Sn} F_k\log F_k\,d\sigma+C\delta_k^2 \int_{\Sn} F_k(\log F_k)^2\,d\sigma.
\]
Jensen inequality and $\log x\le x-1$,  yield
\[
\int_{\Sn} F_k\log F_k\,d\sigma
\le
\frac1{\delta_k}\log\int_{\Sn}F_k^{1+\delta_k}
\le
\int_{\Sn} F_k\log F_k\,d\sigma+C\delta_k \int_{\Sn} F_k(\log F_k)^2\,d\sigma.
\]
Since $\delta_k\to0$ and $\int_{\Sn} F_k(\log F_k)^2\,d\sigma=O(\varepsilon_k^2)$,
\[
\frac1{\delta_k}\log\int_{\Sn}F_k^{1+\delta_k}\dd\sigma
=2\varepsilon_k^2+o(\varepsilon_k^2).
\]
This proves the lemma.
\end{proof}
\section{Localization and the concentration-compactness alternative}
\label{app:scale-localization}
We prove the localization alternative in Proposition \ref{prop:scale-cc}. Fix its conformal cylinder and write
\[ \omega(t,\theta)=(\operatorname{sech}t\,\theta,\tanh t),
 \qquad(t,\theta)\in\R^1 \times\mathbb S^{n-1}.
\]
  \[ \dd\mu=\abs{\mathbb S^n}^{-1}\dd t\dd S_\theta, \qquad
\dd\sigma=\operatorname{sech}^nt\dd\mu, \qquad
v(t,\theta)=\operatorname{sech}^{\frac{n-2s}{2}}t\,u(\omega(t,\theta)).
\]
Then $\norm{v}_{L^{p_s}(\dd\mu)}=\norm{u}_{p_s}$. Since $\dd\mu$ has infinite mass,   $L^2(\dd\mu)$ estimates are first proved for compactly supported representatives and extended by Lemma \ref{lem:cylinder-compact-core}.

\begin{lemma}\label{lem:slow-IMS}
Let $\vartheta\in C^1(\mathbb{R}^1,[0,\frac{\pi}{2}])$ be zero on $(-\infty,-1]$, equal to $\frac{\pi}{2}$ on $[1,\infty)$, and have bounded derivative. For $L\geq2$, $T\in \mathbb{R}^1$, set
\[
 \chi_1=\cos\vartheta(\frac{t-T}{L}),
 \qquad \chi_2=\sin\vartheta(\frac{t-T}{L}).
\]
Then, for every   $u$ in $\mathcal X_s$,
\begin{equation}\label{eq:slow-IMS}
 \abs{Q_s(\chi_1u)+Q_s(\chi_2u)-Q_s(u)}
 \leq C_{n,s_0,\vartheta}sL^{-2+\frac{2s}{n}}\norm{u}_{p_s}^2,
\end{equation}
with a constant independent of $T$.
\end{lemma}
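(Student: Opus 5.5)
The starting point is the IMS identity of Lemma \ref{lem:IMS} with $N=2$. It gives
\[
 Q_s(\chi_1u)+Q_s(\chi_2u)-Q_s(u)
 =B_{n,s}\iint u(\omega)u(\eta)\frac{\Gamma_\chi(\omega,\eta)}{|\omega-\eta|^{n+2s}}\,d\sigma(\omega)\,d\sigma(\eta),
\]
where $\Gamma_\chi=|\chi_1(\omega)-\chi_1(\eta)|^2+|\chi_2(\omega)-\chi_2(\eta)|^2$. Since $(\chi_1,\chi_2)=(\cos\vartheta,\sin\vartheta)$ lies on the unit circle, we have $\Gamma_\chi=2-2\cos(\vartheta(\omega)-\vartheta(\eta))\le|\vartheta(\omega)-\vartheta(\eta)|^2$. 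Here $\vartheta$ is evaluated at $(t-T)/L$, so $\Gamma_\chi$ is bounded both by $\pi^2/4\cdot$const and by $\|\vartheta'\|_\infty^2L^{-2}|t-t'|^2$. By \eqref{eq:Bns}, $B_{n,s}\le Cs$, so it remains to bound the kernel integral by $L^{-2+2s/n}\|u\|_{p_s}^2$ uniformly in $T$.

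We pass to the conformal cylinder. On the sphere the chordal distance satisfies the conformal identity $|\omega-\eta|^2=\operatorname{sech}t\,\operatorname{sech}t'\,\cdot\,2(\cosh(t-t')-\theta\cdot\theta')$. With $v=\operatorname{sech}^{(n-2s)/2}t\,u$ and $d\sigma=\operatorname{sech}^nt\,d\mu$, the powers of $\operatorname{sech}$ cancel exactly, because $(n+2s)/2=n-(n-2s)/2$. The form becomes
\[
 \iint v(t,\theta)v(t',\theta')\,\Gamma\,k_s(t-t',\theta\cdot\theta')\,d\mu\,d\mu,
 \qquad k_s=\bigl(2\cosh(t-t')-2\theta\cdot\theta'\bigr)^{-\frac{n+2s}{2}} .
\]
This kernel is translation invariant in $t$, which is why the bound will be independent of $T$. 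We bound $|v(x)v(y)|\le\frac12(|v(x)|^2+|v(y)|^2)$ and use symmetry, which leaves
\[
 \int|v(x)|^2\,m(x)\,d\mu(x),\qquad m(x):=\int\Gamma(x,y)\,k_s(x,y)\,d\mu(y).
\]

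The key step is to estimate $m$, and to exploit that $m$ is only supported, up to a decaying tail, near the transition slab $|t-T|\lesssim L$. We split $m$ into three parts.
\begin{itemize}
\item \emph{Near field}, $|x-y|\le1$: using $\Gamma\le CL^{-2}d^2$ against $k_s\sim d^{-n-2s}$ gives a contribution $O(L^{-2})$. It is nonzero only if $|t-T|\le L+1$.
\item \emph{Intermediate range}, $1\le|t-t'|\le L$: here $k_s\lesssim e^{-\frac{n+2s}{2}|t-t'|}$, and integrating $L^{-2}|t-t'|^2$ against this gives $O(L^{-2})$.
\item \emph{Far range}: there $\Gamma\le C$, and the exponential decay of $k_s$ gives $Ce^{-c\,\mathrm{dist}(t,[T-L,T+L])}\cdot$, which is small.
\end{itemize}
Altogether $m(x)\le CL^{-2}w(t)$, where $w\approx\mathbf 1_{|t-T|\le 2L}+e^{-c(|t-T|-L)_+}$.

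Finally, Hölder with exponents $p_s/2$ and $n/(2s)$ gives
\[
 \int|v|^2w\,d\mu\le\|v\|_{L^{p_s}(d\mu)}^2\,\|w\|_{L^{n/(2s)}(d\mu)} .
\]
Since $\|w\|_{L^{n/(2s)}}\le C\bigl(L|\mathbb S^{n-1}|\bigr)^{2s/n}\le CL^{2s/n}$, this yields the stated power $L^{-2+2s/n}$, and $\|v\|_{L^{p_s}(d\mu)}=\|u\|_{p_s}$. The exponential tail has finite $L^{n/(2s)}$ norm, uniformly as $s\to0$, since its $n/(2s)$-th power decays even faster. Because $d\mu$ has infinite mass, we first work with compactly supported cylinder representatives so that every integral is finite. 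We then extend to all of $\mathcal X_s$ by Lemma \ref{lem:cylinder-compact-core}, using continuity of both sides in the form norm; multiplication by the $\chi_i$ is form-continuous because $\chi_i$ is Lipschitz on the sphere.

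The main obstacle is the near-field piece. It needs the uniform (in $s<s_0$) local comparison of $k_s$ with $d^{-n-2s}$ on a unit neighbourhood, and the integral $\int_{d\le1}d^{2-n-2s}$ has to stay bounded, which holds since $s<1$. We also have to check that the constants coming from $\vartheta'$ and the cylinder geometry do not depend on $T$.
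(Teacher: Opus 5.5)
Your proposal is correct and follows essentially the same route as the paper: the IMS identity, exact cancellation of the $\operatorname{sech}$ weights on the cylinder, and $\Gamma\le C\min\{1,|t-t'|^2/L^2\}$ together with its vanishing on each side of the slab. The paper then derives a bound $\int\Gamma K_s\,d\mu'\le CL^{-2}e^{-c\,d_T(t)}$ with $d_T(t)=\operatorname{dist}(t,[T-L,T+L])$, symmetrizes, and applies H\"older with exponents $\frac{p_s}{2},\frac{n}{2s}$ to get $L^{\frac{2s}{n}}$, times $B_{n,s}\le Cs$, exactly as you do.

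The one point to tighten is your far-range term. As written it lacks the factor $L^{-2}$ needed for $m\le CL^{-2}w$. The factor is there, because that range has $|t-t'|>L$, so the contribution is $O(e^{-c\max\{L,d_T(t)\}})\le CL^{-2}e^{-c'd_T(t)}$. The paper avoids the three-way split altogether by using $\min\{1,\tau^2/L^2\}\le\tau^2/L^2$ everywhere, against the fiber-integrated kernel bounds $|\tau|^{-1-2s}$ and $e^{-\frac{n}{2}|\tau|}$.
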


\begin{proof}
Write $\mathcal C=\mathbb R^1\times\mathbb S^{n-1}$ and
$\chi(t)=\big(\chi_1(t),\chi_2(t)\big)$. For fixed $T,L$, the cutoffs are constant near the poles and
extend to Lipschitz functions on $\mathbb S^n$. Moreover,
\[
\Gamma_{T,L}(t,t')
:=
|\chi(t)-\chi(t')|^2
\le
C_\vartheta
\min\left\{
1,\frac{|t-t'|^2}{L^2}
\right\}.
\]
And $\Gamma_{T,L}(t,t')=0$ if $t,t'\le T-L$ or if
$t,t'\ge T+L$.
By Lemma \ref{lem:IMS},
\[
 \abs{\omega(t,\theta)-\omega(t',\theta')}^2
 =\frac{2(\cosh(t-t')-\theta\cdot\theta')}{\cosh t\cosh t'},
\]
 and the definition of $v$, we obtain
\[
\begin{aligned}
&\left|
Q_s(\chi_1u)+Q_s(\chi_2u)-Q_s(u)
\right|\\
&\quad\le
B_{n,s}
\int_{\mathcal C} \int_{\mathcal C}
|v(t,\theta)v(t',\theta')|
K_s(t-t',\theta\cdot\theta')
\Gamma_{T,L}(t,t')
\,d\mu\,d\mu',
\end{aligned}
\]
where
\[
K_s(\tau,z)=
[2(\cosh\tau-z)]^{-\frac{n+2s}{2}}.
\]
Using
$2(\cosh\tau-\cos\phi)\geq C (\tau^2+\phi^2)$  for $|\tau|\le1$ and $0\le\phi\le\pi$,
 one obtains, uniformly
for $0<s<s_0$,
\[
\int_{\mathbb S^{n-1}}
K_s(\tau,\theta\cdot\theta')\,dS_{\theta'}
\le
C_{n,s_0}
\begin{cases}
|\tau|^{-1-2s},&0<|\tau|\le1,\\
e^{-\frac{n|\tau|}{2}},&|\tau|\ge1.
\end{cases}
\]
\medskip

Set
$d_T(t):=\operatorname{dist}(t,[T-L,T+L])$.
The support property of $\Gamma_{T,L}$ and the preceding kernel bound
imply
\[
\int_{\mathcal C}
K_s\Gamma_{T,L}\,d\mu'
\le
\frac{C_{n,s_0,\vartheta}}{L^2} \int_{|\tau|\ge d_T(t)}
\int_{\mathbb S^{n-1}} \tau^2
K_s(\tau,\theta\cdot\theta')\,dS_{\theta'}
\,d\tau
\le
\frac{C_{n,s_0,\vartheta}}{L^2} e^{-c_nd_T(t)}.
\]
Here $s_0<1$ ensures uniform integrability at the origin,
and the exponential tail absorbs the factor $\tau^2$.
By $2|vv'|\le |v|^2+|v'|^2$ and symmetry, the double integral
 is  bounded by
\[\frac{C_{n,s_0,\vartheta}}{L^2}
\int_{\mathcal C} |v(t,\theta)|^2e^{-c_nd_T(t)}\,d\mu.\]
H\"older inequality with exponents  $\frac{p_s}{2}$
 and $\frac{n}{2s}$  gives,
\[
\begin{aligned}
\int_{\mathcal C} |v|^2e^{-c_nd_T}\,d\mu
&\le
\|v\|_{p_s}^2
\left(
\int_{\mathcal C} e^{-c_nn \frac{d_T}{2s} }\,d\mu
\right)^{\frac{2s}{n}}\le
C_{n,s_0}L^{\frac{2s}{n}}\|u\|_{p_s}^2.
\end{aligned}
\]
Multiplying by $B_{n,s}\leq C_{n,s_0}s$ proves
\eqref{eq:slow-IMS}.
All constants are independent of $T$.
\end{proof}

The next lemma excludes vanishing in the cylinder variable under the bound $Q_s(u)
\leq 1 + Ks$. Choose a smooth locally finite partition
$\{\chi_j^{(L)}\}_{j\in\mathbb Z}$ depending only on $t$ such that
\begin{equation}\label{eq:periodic-partition}
 \sum_j(\chi_j^{(L)})^2=1,
 \quad
 \supp\chi_j^{(L)}\subset\{\abs{t-jL}\leq C_0L\},
 \quad
 \sum_j\abs{\partial_t\chi_j^{(L)}}^2\leq C_0L^{-2},
\end{equation}
with uniformly bounded overlap.

\begin{lemma}\label{lem:periodic-window}
Let $u$ belong to $\mathcal X_s$ and satisfy
$\norm{u}_{p_s}=1$. For its cylinder representative $v$, set
$ b_j=\norm{\chi_j^{(L)}v}_{p_s}^{p_s}$.
For every $K\ge 0$ and every fixed $L\geq2$, there are $\delta>0$ and $s_1>0$ such that, if $Q_s(u)\leq1+Ks$ and $0<s<s_1$, then
\begin{equation}\label{eq:periodic-window-conclusion}
 \sup_{T\in\mathbb R^1}\int_{\abs{t-T}\leq C_0L}\abs v^{p_s}\dd\mu\geq\delta.
\end{equation}
\end{lemma}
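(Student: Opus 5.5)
Argue by contradiction: assume the windowed masses $\varepsilon:=\sup_j b_j$ are small. Localize with the partition \eqref{eq:periodic-partition} and trade smallness of each piece for a gain of order $s\log(1/\varepsilon)$ in the sum of Sobolev lower bounds. This gain can exceed the budget $Ks$ permitted by $Q_s(u)\le 1+Ks$, which yields the contradiction. The argument is the same mass-power mechanism already used in Lemma \ref{lem:nonnegative-nonvanishing}, now run in the cylinder variable with fixed $L$.

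\textbf{Step 1 (localization).} Since each window $\{|t-T|\le C_0L\}$ is contained in a union of boundedly many windows centered at the points $jL$, it suffices to bound $\sup_j b_j$ from below. I expect the proof of Lemma \ref{lem:slow-IMS} to carry over verbatim to the partition $\{\chi_j^{(L)}\}$. Indeed, Lemma \ref{lem:IMS} applies, and $\Gamma_\chi(t,t')\le C_0\min\{1,|t-t'|^2/L^2\}$ by the gradient bound and bounded overlap. The kernel estimate in $\tau$ is unchanged, and no support restriction is needed, since the factor $\min\{1,\tau^2/L^2\}$ is integrable against the kernel bound on all of $\mathbb R$. This gives
\[
\sum_j Q_s(\chi_j^{(L)}u)\le Q_s(u)+C\,s\int_{\mathcal C}|v|^2\,d\mu\cdot(\text{kernel mass}).
\]
The main obstacle is this last factor. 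Without the exponential decay from $d_T$, Hölder against the infinite-measure $d\mu$ fails, so $\int_{\mathcal C}|v|^2\,d\mu$ is not controlled by $\|v\|_{p_s}^2$.

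\textbf{Fixing the obstacle.} I would handle this by Hölder window by window. On each support $\{|t-jL|\le C_0L\}$, which has finite $\mu$-measure $\lesssim L$, one gets $\int|\chi_j v|^2 \le C L^{2s/n} b_j^{2/p_s}$. Summing over $j$, the error becomes $C s L^{-2+2s/n}\sum_j b_j^{\alpha_s}$ with $\alpha_s=2/p_s$. For fixed $L\ge2$ and $s$ small, this error is at most $\tfrac12\cdot C' s\sum_j b_j^{\alpha_s}$ with an absolute $C'$. It can therefore be absorbed into the left side after the Sobolev step below, because that step also produces $\sum_j b_j^{\alpha_s}$.

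\textbf{Step 2 (mass-power estimate).} By \eqref{Beckner1} in the normalization $Q_s(w)\ge\|w\|_{p_s}^2$, we have $Q_s(\chi_j^{(L)}u)\ge b_j^{\alpha_s}$. Combined with Step 1 this gives
\[
(1-Cs)\sum_j b_j^{\alpha_s}\le 1+Ks.
\]
Bounded overlap $N_0$ gives $\sum_j b_j\ge 1$, since $\sum_j(\chi_j^{(L)})^2=1$ and $p_s/2\ge1$ yield $\sum_j(\chi_j^{(L)})^{p_s}\ge N_0^{1-p_s/2}$; so in fact $\sum_j b_j\ge N_0^{1-p_s/2}$. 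If $b_j\le\varepsilon$ for all $j$, then $b_j^{\alpha_s}\ge\varepsilon^{-2s/n}b_j$, and hence
\[
\varepsilon^{-2s/n}N_0^{1-p_s/2}\le\frac{1+Ks}{1-Cs}.
\]
Take logarithms and divide by $s$, noting $1-p_s/2=O(s)$ and $\log\frac{1+Ks}{1-Cs}\le (K+2C)s$ for $s<s_1$. This gives $\tfrac2n\log(1/\varepsilon)\le K+C''$. Consequently $\sup_j b_j\ge\delta:=\exp(-\tfrac n2(K+C''))$, with $\delta$ depending only on $n,K,L$. This proves \eqref{eq:periodic-window-conclusion}.

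Finally, the computations use compactly supported cylinder representatives; I would extend them to general $u\in\mathcal X_s$ by density via Lemma \ref{lem:cylinder-compact-core}, since both sides are continuous in the form norm together with $L^{p_s}$.
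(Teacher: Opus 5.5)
Your proposal is correct and follows essentially the same route as the paper: you bound the partition IMS error by $CsL^{-2}\|v\|_{L^2(d\mu)}^2$ and use window-by-window H\"older to turn it into $CsL^{-2+\frac{2s}{n}}\sum_j b_j^{\alpha_s}$, which you absorb after the sharp Sobolev bound $Q_s(\chi_j^{(L)}u)\ge b_j^{\alpha_s}$; you then combine the overlap bound $\sum_j b_j\ge N_0^{1-\frac{p_s}{2}}$ with the mass-power estimate and take logarithms. Three small slips: first, $\sum_j b_j\le 1$ rather than $\ge 1$, though only your corrected lower bound is used; second, the reduction to $\sup_j b_j$ holds simply because $b_j\le\int_{|t-jL|\le C_0L}|v|^{p_s}\,d\mu$; third, in the density step $\sum_j b_j^{\alpha_s}$ is not continuous under $L^{p_s}$ convergence, but Fatou's lemma gives the needed lower semicontinuity, exactly as in the paper.
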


\begin{proof}
Write
$\chi_j:=\chi_j^{(L)}$
and let $N_*$ be a uniform overlap bound.
All norms of cylinder functions below are taken with respect to
$d\mu$. Constants depend only on $n,s_0$ and the partition bounds,
unless otherwise indicated.

First suppose that $v$ is smooth and compactly supported.
By \eqref{eq:periodic-partition},
\[
\sum_j|\chi_j(t)-\chi_j(t')|^2
\le
C\min\left\{1,\frac{|t-t'|^2}{L^2}\right\}.
\]
The uniform kernel estimate in the proof of
Lemma \ref{lem:slow-IMS}, together with
Lemma \ref{lem:IMS} and $B_{n,s}\le Cs$, gives
\[
\left|
\sum_jQ_s(\chi_ju)-Q_s(u)
\right|
\le
CsL^{-2}\|v\|_{L^2(d\mu)}^2.
\]
Since each cutoff support has $d\mu$-measure at most $CL$,
H\"older inequality and the quadratic partition yield
\[
\|v\|_{L^2(d\mu)}^2
=
\sum_j\|\chi_jv\|_{L^2(d\mu)}^2
\le
CL^{\frac{2s}{n}}\sum_jb_j^{1-\frac{2s}{n}}.
\]
Combining this with the sharp Sobolev inequality
$Q_s(\chi_ju)\ge b_j^{1-\frac{2s}{n}}$, we obtain
\[
Q_s(u)
\ge
\left(1-CsL^{-2+\frac{2s}{n}}\right)\sum_jb_j^{1-\frac{2s}{n}}.
\]
If necessary, decrease $s_0$ so that
\[
1-CsL^{-2+\frac{2s}{n}}\geq \frac12
\]
for $0<s<s_0$ and $L\ge2$.
Fix $s$ and $L$, let $u^{(m)}$ be cylinder-compact
approximations of a general form-domain function $u$.
Then
\[
u^{(m)}\to u\quad\text{in }L^{p_s},
\qquad
Q_s(u^{(m)})\to Q_s(u).
\]
For each fixed $j$,
\[
b_j^{(m)}\to b_j.
\]
The preceding estimate does not require normalization.
Fatou lemma therefore gives
\[
Q_s(u)\geq
\left(1-CsL^{-2+\frac{2s}{n}}\right)
\sum_j b_j^{1-\frac{2s}{n}}.
\]
\medskip

Next, finite-dimensional H\"older inequality
and
$\sum\limits_j|\chi_j|^2=1$
give the pointwise bounds
$N_*^{1-\frac{p_s}{2}}
\le
\sum\limits_j|\chi_j|^{p_s}
\le1$.
Integrating against $|v|^{p_s}$, and using
$p_s-2=O(s)$, we obtain
\[
1-Cs
\le
N_*^{1-\frac{p_s}{2}}
\le
\sum\limits_jb_j
\le1.
\]
We now use  the lower-energy bound to control the largest
cell mass. Set $B:=\sup\limits_jb_j$. Since
 $Q_s(u)\le1+Ks$,
\[
\sum_jb_j^{1-\frac{2s}{n}}
\ge
B^{-\frac{2s}{n}}\sum_jb_j
\ge
B^{-\frac{2s}{n}}N_*^{1-\frac{p_s}{2}}.
\]
Consequently,
\[
1+Ks
\ge
\left(1-CsL^{-2+\frac{2s}{n}}\right)
B^{-\frac{2s}{n}}N_*^{1-\frac{p_s}{2}}.
\]
For all sufficiently small $s$, taking logarithms gives
\[
\begin{aligned}
\frac{2s}{n}\log\frac1B
&\le
\log(1+Ks)
-\log\left(1-CsL^{-2+\frac{2s}{n}}\right)
+\left(\frac {p_s}{2}-1\right)\log N_*\le C_{n,s_0,L}(K+1)s.
\end{aligned}
\]
Thus $B\ge\delta>0$, where $\delta$ is independent of $s$ and $u$.
Since $|\chi_j|\le1$ and
$\operatorname{supp}\chi_j\subset\{|t-jL|\le C_0L\}$,
\[
\sup_{T\in\mathbb R}
\int_{|t-T|\le C_0L}|v|^{p_s}\,d\mu
\ge
\sup_jb_j
=
B
\ge\delta.
\]
This proves \eqref{eq:periodic-window-conclusion}.
\end{proof}

Next we will verify that a
single normalized bubble cannot correlate substantially with two pieces
separated by a diverging cylinder gap.
A cylinder bubble satisfies
\begin{equation}\label{eq:cylinder-bubble-density}
 \abs{h_{\xi,s}}^{p_s}\dd\sigma
 =\frac{(1-b^2)^{\frac{n}{2}}}
 {[\cosh(t-T)-be\cdot\theta]^n}\dd\mu
\end{equation}
for suitable $T\in\R ^1$, $b\in[0,1)$ and $e\in\mathbb S^{n-1}$. Hence
\begin{equation}\label{eq:bubble-cylinder-tail}
 \int_{\abs{t-T}\geq R}\abs{h_{\xi,s}}^{p_s}\dd\sigma
 \leq C_ne^{-c_nR},\qquad R\geq1,
\end{equation}
uniformly in $s$ and $\xi$.

\begin{lemma}\label{lem:corr-gap}
Let $A_k<B_k$, $B_k-A_k\to\infty$. Let $w_k^-$ and $w_k^+$ have bounded $L^{p_k}$ norms and cylinder supports in $\{t\leq A_k\}$ and $\{t\geq B_k\}$. Then
\begin{equation}\label{eq:corr-gap}
 m_k(w_k^-+w_k^+)
 =\max\{m_k(w_k^-),m_k(w_k^+)\}+o(1).
\end{equation}
The same statement holds for finitely many pieces with pairwise diverging gaps.
\end{lemma}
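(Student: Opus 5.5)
The plan is to compare, for each normalized bubble separately, its correlation with the sum and with each piece, and then take suprema. Write $w_k=w_k^-+w_k^+$. For every $\xi\in\mathbb B^{n+1}$ put
\[
\ell_\xi(f):=\int_{\Sn}f\,g_{\xi,s_k}\,\dd\sigma ,
\]
so that $\ell_\xi(w_k)=\ell_\xi(w_k^-)+\ell_\xi(w_k^+)$. The lower bound $m_k(w_k)\ge\max\{m_k(w_k^-),m_k(w_k^+)\}-o(1)$ and the upper bound both follow once we know
\begin{equation}\label{eq:plan-one-sided}
 \sup_{\xi\in\mathbb B^{n+1}}\min\bigl\{|\ell_\xi(w_k^-)|,\,|\ell_\xi(w_k^+)|\bigr\}\longrightarrow0 .
\end{equation}
Indeed, \eqref{eq:plan-one-sided} says that every bubble sees at most one piece up to $o(1)$. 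Then $|\ell_\xi(w_k)|^2$ is within $o(1)$ of $|\ell_\xi(w_k^\pm)|^2$ for the appropriate sign. Here the $L^{p_k}$ bounds together with $\|g_{\xi,s_k}\|_{p_k'}=1$ make all correlations uniformly bounded, so squaring costs only $o(1)$. Taking suprema over $\xi$ then gives \eqref{eq:corr-gap}.

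To prove \eqref{eq:plan-one-sided}, I will work on the cylinder, where the bubble $h_{\xi,s_k}$ has a center $T=T(\xi)$ as in \eqref{eq:cylinder-bubble-density}. Transfer the dual pairing to $\dd\mu$ with the weight $\operatorname{sech}$ factors. Since $v$ carries $\operatorname{sech}^{(n-2s)/2}$ and $g_{\xi,s}$ in cylinder form carries the complementary power, the pairing becomes
\[
\int_{\mathcal C} v\,\tilde g\,\dd\mu,\qquad |\tilde g|^{p_k'}\dd\mu=|h_{\xi,s_k}|^{p_k}\dd\sigma .
\]
Hölder on a region $E$ then gives
\[
\Bigl|\int_{E}w\,g_{\xi,s_k}\,\dd\sigma\Bigr|\le \|w\|_{p_k}\Bigl(\int_{E}|h_{\xi,s_k}|^{p_k}\dd\sigma\Bigr)^{1/p_k'} .
\]
Since $B_k-A_k\to\infty$, at least one of the half-lines $\{t\le A_k\}$, $\{t\ge B_k\}$ lies at cylinder distance at least $R_k:=(B_k-A_k)/2\to\infty$ from $T(\xi)$. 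On that half-line, \eqref{eq:bubble-cylinder-tail} bounds the bubble mass by $C_ne^{-c_nR_k}$, uniformly in $\xi$ and $s$. Since $p_k'\to2$, the corresponding correlation is at most $C\bigl(e^{-c_nR_k}\bigr)^{1/3}\to0$ uniformly in $\xi$, which is exactly \eqref{eq:plan-one-sided}.

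For finitely many pieces with pairwise diverging gaps, the same argument applies: every bubble center is far from all but at most one piece, because two pieces both near $T(\xi)$ would violate the diverging gap. The sum of the remaining correlations is therefore $o(1)$, and the argument goes through as before.

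The main obstacle is checking the cylinder identity, namely that the pairing $\int w\,g_{\xi,s}\,\dd\sigma$ really becomes $\int v\,\tilde g\,\dd\mu$ with $|\tilde g|^{p_s'}\dd\mu$ equal to the bubble density, so that Hölder applies with \eqref{eq:bubble-cylinder-tail}. This is an exponent computation: the weights $\operatorname{sech}^{(n-2s)/2}$, $\operatorname{sech}^{(n+2s)/2}$ and $\operatorname{sech}^{n}$ must balance, and they do since $\frac{n-2s}{2}+\frac{n+2s}{2}=n$. A second point to confirm is that the tail bound and the location of $T(\xi)$ are uniform in $b\in[0,1)$. This holds because the density $(1-b^2)^{n/2}[\cosh(t-T)-b\,e\cdot\theta]^{-n}$, after integration in $\theta$, decays like $e^{-c|t-T|}$ with a constant independent of $b$. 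I would confirm this by the same $\theta$-integration used for \eqref{eq:bubble-cylinder-tail}.
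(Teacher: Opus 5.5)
Your proposal is correct and follows the paper's proof essentially step for step. The paper also shows that, uniformly over normalized bubbles, $\min\{|I_k^-(h)|,|I_k^+(h)|\}\le Ce^{-cd_k}$ by splitting according to whether the cylinder center $T_h$ lies below or above $\frac{A_k+B_k}{2}$, and then concludes via $\bigl||a+b|-\max\{|a|,|b|\}\bigr|\le\min\{|a|,|b|\}$, suprema and squaring; the only difference is that the paper applies H\"older directly on $\Sn$ (using $|g_{\xi,s}|^{p_s'}=|h_{\xi,s}|^{p_s}$), so your cylinder transfer of the pairing, while correct, is not needed.
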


\begin{proof}
Write   $d_k:=B_k-A_k$.
For a normalized bubble $h$, let $T_h$ be its cylinder
center in \eqref{eq:cylinder-bubble-density}, and set
\[
 I_k^\pm(h)
 :=
 \int_{\mathbb S^n}w_k^\pm |h|^{p_k-2}h\,d\sigma.
\]
If $T_h\le \frac{A_k+B_k}{2}$, then  H\"older inequality and
\eqref{eq:bubble-cylinder-tail} give, for all sufficiently
large $k$,
\[
 \begin{aligned}
 |I_k^+(h)|
 &\le
 \|w_k^+\|_{p_k}
 \left(
 \int_{\{|t-T_h|\ge \frac{d_k}{2}\}}|h|^{p_k}\,d\sigma
 \right)^{\frac{1}{p_k'}} \le C e^{-c d_k}.
 \end{aligned}
\]
The constants are independent of $h$ and $k$, since
$\frac{1}{p_k'}=\frac{1}{2}+\frac{s_k}{n}\ge\frac{1}{2}$ and the $L^{p_k}$-norms are
uniformly bounded.
If $T_h>\frac{A_k+B_k}{2}$, the same estimate holds for
$I_k^-(h)$. Consequently,
\[
 \min\{|I_k^-(h)|,|I_k^+(h)|\}
 \le \varepsilon_k:=Ce^{-cd_k}\longrightarrow0
\]
uniformly over normalized bubbles.
The elementary inequality
\[\left||a+b|-\max\{|a|,|b|\}\right|
 \le \min\{|a|,|b|\},\]
 gives,
after taking suprema over normalized bubbles,
\[
 \left|
 m_k(w_k^-+w_k^+)^{\frac{1}{2}}
 -\max\{m_k(w_k^-)^{\frac{1}{2}},m_k(w_k^+)^{\frac{1}{2}}\}
 \right|
 \le\varepsilon_k.
\]
By H\"older inequality, $m_k(f)^{\frac{1}{2}}\le\|f\|_{p_k}$.
Hence all quantities above are uniformly bounded, and
squaring proves \eqref{eq:corr-gap}.
\medskip

For a fixed family of $J$ pieces, let $d_k\to\infty$ be
their minimum pairwise gap in the cylinder coordinate.
For every bubble center, at most one support meets
$\{|t-T_h|<\frac{d_k}{2}\}$.
Consequently, at most one of the corresponding pairings
$I_{j,k}(h)$ can be non-negligible. Every other pairing satisfies
\[
|I_{j,k}(h)|\leq Ce^{-cd_k},
\]
uniformly in $h$.
The triangle and reverse triangle inequalities yield
\[
\left||\sum_{j=1}^J I_{j,k}(h)|
-\max_{1\le j\le J}|I_{j,k}(h)|
\right|
\le C (J-1) e^{-cd_k}.
\]
Taking suprema and squaring as above proves the finite-piece
assertion.
\end{proof}

\begin{proof}[Proof of the alternatives in Proposition \ref{prop:scale-cc}]
Subsequences are not relabeled. Define
\[
 \nu_k(E):=\int_{E\times\mathbb S^{n-1}}|v_k|^{p_k}\,d\mu,
 \qquad
 q_k(R):=\sup_{T\in\mathbb R}\nu_k([T-R,T+R]).
\]
After a diagonal extraction, $q_k(R)\to q(R)$ for every positive
integer $R$. Set $q_\infty:=\lim_{R\to\infty}q(R)$.
Lemma \ref{lem:periodic-window} gives $q_\infty>0$.

If $q_\infty=1$, fix $R_0$ and centers $T_k$ such that
$\nu_k([T_k-R_0,T_k+R_0])>\frac34$ for all large $k$.
For each $0<\varepsilon<\frac14$, choose $R_\varepsilon$ so that,
for all large $k$, some interval of radius $R_\varepsilon$ carries
mass greater than $1-\varepsilon$. It intersects the first interval
and is contained in
$[T_k-R_0-2R_\varepsilon,T_k+R_0+2R_\varepsilon]$.
Thus the translated measures are tight, proving \textup{(S1)}.

Suppose $0<q_\infty<1$. A diagonal choice gives integer radii
$r_k\to\infty$ and $R_k>r_k$, with $R_k-r_k\to\infty$, such that
$q_k(r_k),q_k(R_k)\to q_\infty$.
Choose centers $T_k$ asymptotically maximizing $q_k(r_k)$. Then
\begin{equation}\label{eq:cc-inner-outer}
 \nu_k([T_k-r_k,T_k+r_k])=q_\infty+o(1),\qquad
 \nu_k([T_k-R_k,T_k+R_k])=q_\infty+o(1).
\end{equation}
Set
\[
 L_k:=\frac{R_k-r_k}{4},\qquad
 \tau_k^\pm:=T_k\pm\frac{R_k+r_k}{2},\qquad
 E_k^\pm:=[\tau_k^\pm-L_k,\tau_k^\pm+L_k].
\]
Then $L_k\to\infty$, and by \eqref{eq:cc-inner-outer},
$\nu_k(E_k)=o(1)$ for $E_k:=E_k^-\cup E_k^+$.
Using the function $\vartheta$ from Lemma \ref{lem:slow-IMS}, define
\[
 \phi_k(t):=\vartheta\!\left(\frac{t-\tau_k^-}{L_k}\right),\qquad
 \psi_k(t):=\vartheta\!\left(\frac{t-\tau_k^+}{L_k}\right),
\]
\[
 \chi_{-,k}:=\cos\phi_k,\qquad
 \chi_{0,k}:=\sin\phi_k\cos\psi_k,\qquad
 \chi_{+,k}:=\sin\phi_k\sin\psi_k.
\]
Set $u_k^j:=\chi_{j,k}u_k$.
Applying Lemma \ref{lem:slow-IMS} first to $u_k$ and then to
$\sin\phi_k\,u_k$, whose $L^{p_k}$ norm is at most $1$, gives
\begin{equation}\label{eq:cc-energy-full}
 \left|\sum_j Q_k(u_k^j)-Q_k(u_k)\right|
 \le Cs_kL_k^{-2+\frac{2s_k}{n}}=o(s_k).
\end{equation}
Since $\sum_j\chi_{j,k}^2=1$ and exactly one cutoff equals one
outside $E_k$, finite-dimensional H\"older inequality yields
\begin{equation}\label{eq:cc-mass-full}
 0\le1-\sum_j\|u_k^j\|_{p_k}^{p_k}
 \le C(p_k-2)\nu_k(E_k)=o(s_k).
\end{equation}
After a further extraction,
\[
 \|u_k^j\|_{p_k}^{p_k}\to a_j,\qquad
 a_0=q_\infty,\qquad a_-+a_+=1-q_\infty.
\]
At least two limiting masses are positive. We retain all three
pieces, including any with zero limiting mass.

Let $\widetilde u_k^j$ have cylinder representative
$\mathbf1_{E_k^c}\chi_{j,k}v_k$. Then
\[
 \left\|u_k-\sum_j\widetilde u_k^j\right\|_{p_k}
 +\sum_j\|u_k^j-\widetilde u_k^j\|_{p_k}
 \le4\nu_k(E_k)^{\frac1{p_k}}=o(1).
\]
The bulk supports are separated by gaps of width $2L_k\to\infty$.
These functions are used only in $L^{p_k}$ estimates, not in the
quadratic form. By H\"older inequality,
\[
 \left|m_k(f)^{\frac12}-m_k(g)^{\frac12}\right|
 \le\|f-g\|_{p_k},\qquad
 m_k(f)^{\frac12}\le\|f\|_{p_k}.
\]
These quantities are uniformly bounded, so squaring yields
\begin{equation}\label{eq:cc-corr-full}
 m_k(u_k)=\max_jm_k(u_k^j)+o(1).
\end{equation}
Together with \eqref{eq:cc-energy-full} and \eqref{eq:cc-mass-full},
this proves \textup{(S2)}. The alternatives are incompatible:
two bulk pieces with positive limiting masses and diverging
separation cannot both remain in a bounded interval after translation.
\end{proof}

\section{Inverse-kernel estimates and fixed-order regularity}
\label{app:regularity}
We prove the inverse-kernel bounds and fixed-order regularity used in Section \ref{sec:uniform-boundedness}.

\begin{proof}[Proof of Lemma \ref{lem:kernel-constants}]
Set
$c_n:=\frac{|\mathbb S^{n-1}|}{|\mathbb S^n|}$.
By the Funk-Hecke formula, the
integral operator in
\eqref{eq:inverse-kernel}
has multiplier
\[
\frac{\Gamma(\frac{n}{2}+s)}{\Gamma(\frac{n}{2}-s)}
\frac{\Gamma(\ell+\frac{n}{2}-s)}{\Gamma(\ell+\frac{n}{2}+s)}
\]
on spherical harmonics of degree $\ell$.
Thus the integral operator in
\eqref{eq:inverse-kernel} agrees with $P_s^{-1}$ on spherical
polynomials. By density of spherical polynomials in
$L^{p_s'}(\Sn)$ and the spherical HLS inequality, this identity
extends to every $g\in L^{p_s'}(\Sn)$ as an equality in
$L^{p_s}(\Sn)$. 
\medskip

Fix $\omega$ and write $\rho=|\omega-\eta|$.
Radial integration gives
\begin{equation}\label{eq:radial-prob}
\int_{\mathbb S^n}\varphi(\rho)\,d\sigma(\eta)
=
c_n\int_0^2
\varphi(\rho)\rho^{n-1}
\left(1-\frac{\rho^2}{4}\right)^{\frac{n-2}{2}}\,d\rho.
\end{equation}
Since $n\ge2$, the last factor is at most one.
Hence, for $0<R\le1$,
\[
\kappa_{n,s}
\int_{\rho<R}\rho^{2s-n}\,d\sigma
\le
\frac{\kappa_{n,s}c_n}{2s}R^{2s}
=
A_{n,s}R^{2s}.
\]
This proves \eqref{eq:kernel-near}.
The expression for $A_{n,s}$ in \eqref{eq:A-ns} extends smoothly to
$[0,\frac{n}{2}]$ with $A_{n,0}=1$.
Consequently,
\[
A_{n,s}=1+O_n(s),
\qquad
\kappa_{n,s}=\frac{2s}{c_n}A_{n,s}\le C_ns.
\]
Finally, since $(n-2s)p_s=2n$,   \eqref{eq:radial-prob} yields
\[
\begin{aligned}
\kappa_{n,s}
\left(
\int_{\rho>R}\rho^{-(n-2s)p_s}\,d\sigma
\right)^{\frac{1}{p_s}}&\le
\kappa_{n,s}
\left(c_n\int_R^2\rho^{-n-1}\,d\rho\right)^{\frac{1}{p_s}}\\
&\le
\kappa_{n,s}
\left(\frac{c_n}{n}\right)^{\frac{1}{p_s}}
R^{-\frac{n}{p_s}}
\le C_nsR^{-\frac{n}{p_s}}.
\end{aligned}
\]
Here the last constant is independent of $s$ and $R$, since
$0<\frac{1}{p_s}<\frac{1}{2}$.
This proves \eqref{eq:kernel-far}.
\end{proof}

The remaining result supplies the fixed-order regularity needed to evaluate the
integral equation at a maximum point.

\begin{lemma}
\label{lem:fixed-regularity}
Fix $0<s<\min\{1,\frac{n}{6}\}$.  Let $0\leq\gamma<1$,
$W\in L^\infty(\Sn)$, and   $u\in L^{p_s}(\mathbb{S}^n)$ solve, in the distributional sense,
\begin{equation}\label{eq:fixed-EL}
 (1-\gamma)P_su=\abs u^{p_s-2}u-\gamma W.
\end{equation}
Then
$ u\in L^\infty(\Sn)\cap C^{0,\beta}(\Sn)$
for some $\beta=\beta(n,s)>0$.  In particular, $u$ has a continuous
representative and $\abs u$ attains its maximum on $\Sn$.
\end{lemma}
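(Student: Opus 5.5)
We invert the equation and bootstrap in the integral form. Since $W\in L^\infty\subset L^{p_s'}$ and $|u|^{p_s-2}u\in L^{p_s'}$, Lemma \ref{lem:kernel-constants} lets us rewrite \eqref{eq:fixed-EL} as
\[
 (1-\gamma)u=\kappa_{n,s}\int_{\Sn}\frac{|u|^{p_s-2}u(\eta)-\gamma W(\eta)}{|\omega-\eta|^{n-2s}}\,\dd\sigma(\eta)
\]
almost everywhere. The $W$-term is a Riesz potential of a bounded function on a compact manifold, hence bounded and Hölder continuous of every order $<\min\{2s,1\}$. So everything reduces to regularity for the critical equation $u=c\,I_{2s}(V u)$ with $V:=|u|^{p_s-2}\in L^{n/(2s)}$.

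\textbf{Step 1 ($L^q$ for all $q<\infty$).} We apply the Brezis--Kato / Jin--Li--Xiong splitting. Fix $\epsilon>0$ and decompose $V=V_1+V_2$ with $\|V_1\|_{n/(2s)}\le\epsilon$ and $V_2\in L^\infty$; this is possible by absolute continuity, truncating $|u|$ at a large level. By HLS, $u\mapsto I_{2s}(V_1u)$ has norm $\le C_q\epsilon$ on $L^q$ for each $q\in(p_s,\infty)$. Since $I_{2s}(V_2u)+I_{2s}\gamma W\in L^{q}$ for any $q$ (start with $u\in L^{p_s}$ and improve), a fixed-point/uniqueness argument in $L^{p_s}\cap L^q$ gives $u\in L^q$. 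Iterating across a finite ladder of exponents yields $u\in L^q$ for every $q<\infty$.

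\textbf{Step 2 ($L^\infty$ and Hölder).} Once $u\in L^q$ with $q(p_s-2)$ large, $|u|^{p_s-1}\in L^{r}$ with $r>n/(2s)$. Hölder's inequality against the kernel $|\omega-\eta|^{2s-n}\in L^{r'}$ then gives boundedness. For Hölder continuity, the Riesz potential of an $L^r$ density with $r>n/(2s)$ is $C^{0,\beta}$ with $\beta=\min\{2s-n/r,1\}$ minus $\epsilon$. This follows from the standard estimate
\[
\bigl||\omega-\eta|^{2s-n}-|\omega'-\eta|^{2s-n}\bigr|,
\]
split into near and far regions. Thus $u$ has a continuous representative, and $|u|$ attains its maximum by compactness of $\Sn$.

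The restriction $s<n/6$ is only used to keep $p_s-1\le 2$ (so $p_s\le 3$) and the nonlinearity tame, and $s<1$ to keep the Riesz kernel in the range where the $C^{0,\beta}$ estimate with $\beta\le 2s$ is elementary.

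\textbf{Main obstacle.} We expect Step 1 to be the hardest, since the problem is critical: $V\in L^{n/(2s)}$ exactly, so a naive bootstrap gains nothing. We will handle it with the small-norm splitting above. We also need to check carefully that the distributional equation genuinely coincides with the integral equation. That check uses that $P_s$ is injective with the inverse given by \eqref{eq:inverse-kernel} on $L^{p_s'}$, and that $u\in L^{p_s}$ pairs with spherical polynomials.
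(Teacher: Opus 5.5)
Your proposal is correct and follows essentially the paper's own argument. You invert the equation through the kernel representation of Lemma~\ref{lem:kernel-constants}, then bootstrap from $L^{p_s}$ to every finite $L^q$ by splitting $V$ into a part small in $L^{n/(2s)}$ and a bounded part, with a contraction on two exponents plus uniqueness along a ladder gaining at most $2s/n$ per step. Boundedness and $C^{0,\beta}$ then follow from the Riesz-potential estimate for $L^r$ data with $r>n/(2s)$. One small slip: in Step 2 the relevant condition is $q/(p_s-1)>n/(2s)$, not ``$q(p_s-2)$ large,'' which is harmless since you already have every finite $q$.
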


\begin{proof}
Write $p=p_s$ and $a=\frac{n}{2s}$, and set
\[
V=(1-\gamma)^{-1}|u|^{p-2},
\qquad
g=-\gamma(1-\gamma)^{-1}W.
\]
Since $Vu+g\in L^{p'}(\mathbb S^n)$, the weak equation gives
\begin{equation}\label{eq:fixed-integral}
u=P_s^{-1}(Vu+g).
\end{equation}
Moreover, $a(p-2)=p$ and $u\in L^p(\mathbb S^n)$ imply $V\in L^a(\mathbb S^n)$.
\medskip

We claim that $u\in L^q(\mathbb S^n)$ implies $u\in L^Q(\mathbb S^n)$ whenever
\[
p\le q<Q<\infty,
\qquad
\frac1q-\frac1Q<\frac1a.
\]
Split $V=V_1+V_2$, where
$ V_1:=V\mathbf 1_{\{|V|>K\}}$ and
$V_2:=V\mathbf 1_{\{|V|\le K\}}$.
Then $V_2\in L^\infty(\mathbb S^n)$ and $\|V_1\|_a\to0$ as $K\to\infty$.
For a measurable function $W$, let $M_W$ denote the multiplication
operator
\[
 (M_W\varphi)(\omega):=W(\omega)\varphi(\omega).
\]
In particular,
$ P_s^{-1}M_{V_1}\varphi=P_s^{-1}(V_1\varphi)$.
For every finite $t\ge p$, H\"older inequality  gives
\[
\|P_s^{-1}(V_1\varphi)\|_t
\le C_{n,s,t}\|V_1\|_a\|\varphi\|_t.
\]
Choose $K$ so that $P_s^{-1}M_{V_1}$ has operator norm less
than $\frac{1}{2}$ on both $L^q(\mathbb S^n)$ and $L^Q(\mathbb S^n)$.
\medskip

Define $r$ by $\frac{1}{r}:=\frac{1}{Q}+\frac{1}{a}$.
Then $1<r<\min\{q,a\}$, so $V_2u+g\in L^r(\mathbb S^n)$ and HLS inequality yields
$P_s^{-1}(V_2u+g)\in L^Q(\mathbb S^n)\subset L^q(\mathbb S^n)$.
Let $\widetilde u\in L^Q(\mathbb S^n)$ be the unique solution of
\[(I-P_s^{-1}M_{V_1})\widetilde u=P_s^{-1}(V_2u+g)\]
 in $L^Q(\mathbb S^n)$.
Since $\widetilde u\in L^q(\mathbb S^n)$, both $u$ and $\widetilde u$
solve the same equation.
Uniqueness in $L^q(\mathbb S^n)$ therefore gives $u=\widetilde u$,
proving the claim.
\medskip

For any finite $R>p$, partition $[\frac{1}{R},\frac{1}{p}]$ into finitely
many intervals of length less than $\frac{1}{a}$.
Iterating the claim along the corresponding exponents gives
$u\in L^R(\mathbb S^n)$.
By finite-measure
 inclusion, we conclude that
$u\in L^q(\mathbb S^n)$
for every $1\le q<\infty$.
\medskip

It remains to obtain continuity.  Fix $r>a$.
Since $u\in L^{r(p-1)}(\mathbb S^n)$, we have $Vu+g\in L^r(\mathbb S^n)$.
The H\"older estimate for the order-$2s$ spherical potential
and \eqref{eq:fixed-integral} yield
\[
u\in C^{0,\beta}(\mathbb S^n),
\qquad
0<\beta<\min\left\{1,2s-\frac nr\right\}.
\]
Thus $u$ has a bounded continuous representative, and $|u|$
attains its maximum on $\mathbb S^n$.
\end{proof}

\section{Spectral comparison}
We record the spectral comparison used in the endpoint compactness argument.
\begin{lemma}\label{lem:spectral-domination}
For every $0<s<\frac{n}{2}$ and $\ell\geq0$,
\begin{equation*}
\frac{r_{s,\ell}-1}{2s}\geq\Lambda_\ell.
\end{equation*}
\end{lemma}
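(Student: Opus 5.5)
Fix $\ell$ and set $x:=\ell+\frac n2$, $y:=\frac n2$, so that $x\ge y>0$. Then
\[
 r_{s,\ell}=\frac{\Gamma(x+s)\Gamma(y-s)}{\Gamma(x-s)\Gamma(y+s)}=e^{F(s)},
 \qquad
 F(s):=\bigl[\log\Gamma(x+s)-\log\Gamma(x-s)\bigr]-\bigl[\log\Gamma(y+s)-\log\Gamma(y-s)\bigr],
\]
and $\Lambda_\ell=\psi(x)-\psi(y)=\frac12F'(0)$. We will show that $F(s)\ge 2s\Lambda_\ell$ for $0<s<\frac n2$. Since $e^{F}-1\ge F$, this gives $r_{s,\ell}-1\ge F(s)\ge 2s\Lambda_\ell$, which is the claim.

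The inequality $F(s)\ge 2s\Lambda_\ell$ is equivalent to convexity of $F$ on $[0,\frac n2)$, since $F(0)=0$ and $F'(0)=2\Lambda_\ell$. To check convexity, differentiate:
\[
 F'(s)=\psi(x+s)+\psi(x-s)-\psi(y+s)-\psi(y-s),
 \qquad
 F''(s)=\bigl[\psi'(x+s)-\psi'(y+s)\bigr]-\bigl[\psi'(x-s)-\psi'(y-s)\bigr].
\]
Set $G(t):=\psi'(x+t)-\psi'(y+t)$, so that $F''(s)=G(s)-G(-s)$. It then suffices to show that $G$ is nondecreasing on $(-y,y)$, where all arguments stay positive because $s<\frac n2=y\le x$.

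We have $G'(t)=\psi''(x+t)-\psi''(y+t)$. The function $\psi''$ is negative and increasing on $(0,\infty)$, since $\psi'''(z)=\sum_k 6(z+k)^{-4}>0$. Because $x+t\ge y+t$, this gives $\psi''(x+t)\ge\psi''(y+t)$, so $G'\ge0$. Hence $G(s)\ge G(-s)$ for $s\ge0$, so $F''\ge0$ and $F$ is convex on $[0,\frac n2)$.

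An alternative route avoids the second derivative. By the Lagrange mean value theorem, $F'(s)-F'(0)$ is a sum of differences of $\psi'$ along segments with the same length, and $\psi'$ is convex, which gives the same monotonicity.

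The only delicate point is ensuring that all Gamma arguments remain positive, which holds because $s<y$. The degenerate case $\ell=0$ gives $0\ge0$, consistent with $r_{s,0}=1$ and $\Lambda_0=0$.
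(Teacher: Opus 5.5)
Your proof is correct and is essentially the paper's argument. The paper writes $\log r_{s,\ell}=\int_{-s}^{s}g(t)\,dt$ with $g(t)=\psi(\ell+\frac n2+t)-\psi(\frac n2+t)$, gets convexity of $g$ from the monotonicity of $\psi''$, and applies Jensen on $[-s,s]$; you instead show that $F(s)=\int_{-s}^{s}g$ is convex (your $G$ is exactly $g'$, so this is the same fact) and use its tangent line at $0$, before the same final step $e^{y}-1\ge y$. Your mean value theorem aside is too loosely stated to stand on its own, but the main argument does not need it.
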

\begin{proof}
Set
\[
x=\frac n2+\ell,
\qquad
x_0=\frac n2,
\qquad
g(t)=\psi(x+t)-\psi(x_0+t).
\]Since $0<s<\frac{n}{2}$, all arguments below are positive for
$t\in[-s,s]$.
By \eqref{eq:frac-eigen} and the definition of $\psi$,
\[
\begin{aligned}
\log r_{s,\ell}
&=
\int_{-s}^{s}
\bigl[\psi(x+t)-\psi(x_0+t)\bigr]\,dt=
\int_{-s}^{s}g(t)\,dt.
\end{aligned}
\]
For $y>0$,
$\psi''(y)=-2\sum\limits_{j=0}^{\infty}(y+j)^{-3}$
is strictly increasing. Hence
\[
g''(t)=\psi''(x+t)-\psi''(x_0+t)
\ge0,
\]
so $g$ is convex on $[-s,s]$.
Jensen inequality for the
uniform probability measure on $[-s,s]$ implies
\[
\frac1{2s}\log r_{s,\ell}
=
\frac1{2s}\int_{-s}^{s}g(t)\,dt
\ge
g(0)
=
\psi\!\left(\ell+\frac n2\right)
-\psi\!\left(\frac n2\right)
=
\Lambda_\ell.
\]
Finally, applying $e^y-1\ge y$ with
\(y=\log r_{s,\ell}\), we obtain
\[
\frac{r_{s,\ell}-1}{2s}
\ge
\frac{\log r_{s,\ell}}{2s}
\ge
\Lambda_\ell.
\]
\end{proof}

{\bf Conflict of interest.} The authors declare that there is no conflict of interest.
\medskip

{\bf Data availability.} No data were used in the research described in this article.


\begin{thebibliography}{99}

\bibitem{BakryGentilLedoux2014} D. Bakry, I. Gentil and M. Ledoux, \textit{Analysis and Geometry of Markov Diffusion Operators}, Grundlehren der mathematischen Wissenschaften 348, Springer, Cham, 2014.

\bibitem{BartschWethWillem2003}
T. Bartsch, T. Weth and M. Willem,
\textit{A Sobolev inequality with remainder term and critical equations on domains with topology for the polyharmonic operator},
Calc. Var. Partial Differential Equations \textbf{18}
(2003), 253-268.

\bibitem{Beckner1992}
W. Beckner,
\textit{Sobolev inequalities, the Poisson semigroup, and analysis on the sphere
$\mathbb S^n$},
Proc. Natl. Acad. Sci. USA \textbf{89} (1992), 4816-4819.

\bibitem{Beckner1993}
W. Beckner,
\textit{Sharp Sobolev inequalities on the sphere and the Moser-Trudinger
inequality},
Ann. of Math. \textbf{138} (1993), 213-242.

\bibitem{Beckner1997}
W. Beckner,
\textit{Logarithmic Sobolev inequalities and the existence of singular
integrals},
Forum Math. \textbf{9} (1997), 303-324.

\bibitem{BianchiEgnell1991}
G. Bianchi and H. Egnell,
\textit{A note on the Sobolev inequality},
J. Funct. Anal. \textbf{100} (1991), 18-24.

\bibitem{BrezisLieb1985}
H. Brezis and E. Lieb,
\textit{Sobolev inequalities with remainder terms},
J. Funct. Anal. \textbf{62} (1985), 73-86.



\bibitem{Carlen2017}
E. A. Carlen, \textit{Duality and stability for functional inequalities},
Ann. Fac. Sci. Toulouse Math. \textbf{26} (2017), 319-350.

\bibitem{ChenFrankWeth2013}
S. Chen, R. L. Frank and T. Weth,
\textit{Remainder terms in the fractional Sobolev inequality},
Indiana Univ. Math. J. \textbf{62} (2013), 1381-1397.

\bibitem{ChenLuTang2023}
L. Chen, G. Lu and H. Tang,
\textit{Sharp stability of log-Sobolev and Moser-Onofri inequalities on the sphere},
J. Funct. Anal. \textbf{285} (2023), 110022.

\bibitem{ChenLuTang2024}
L. Chen, G. Lu and H. Tang,
\textit{Stability of Hardy-Littlewood-Sobolev inequalities with explicit lower
bounds},
Adv. Math. \textbf{450} (2024), 109778.



\bibitem{ChenLuTang2025}
L. Chen, G. Lu and H. Tang,
\textit{Optimal asymptotic lower bound for stability of fractional Sobolev
inequality and the global stability of log-Sobolev inequality on the
sphere},
Adv. Math. \textbf{479} (2025), 110438.

\bibitem{ChenLuTang2026MA}
L. Chen, G. Lu and H. Tang,
\textit{Optimal stability of Hardy-Littlewood-Sobolev and Sobolev inequalities  of arbitrary orders with dimension-dependent constants},
Math. Ann. \textbf{394} (2026), 77.

\bibitem{ChenLuTang2026CP}
L. Chen, G. Lu and H. Tang,
\textit{Stability for critical points of the Hardy-Littlewood-Sobolev
inequality and a dual stability framework},
arXiv:2605.19288 (2026).

\bibitem{Clarke1983}
F. H. Clarke,
\textit{Optimization and Nonsmooth Analysis},
Wiley, New York, 1983.

\bibitem{Danskin1967}
J. M. Danskin,
\textit{The Theory of Max-Min and Its Application to Weapons
Allocation Problems},
Springer-Verlag, Berlin, 1967.


\bibitem{DolbeaultEtAl2025}
J. Dolbeault, M. J. Esteban, A. Figalli, R. L. Frank and M. Loss,
\textit{Sharp stability for Sobolev and log-Sobolev inequalities,
with optimal dimensional dependence},
Camb. J. Math. \textbf{13} (2025), 359-430.



\bibitem{FrankKonigTang2020}
R. L. Frank, T. K\"onig and H. Tang,
\textit{Classification of solutions of an equation related to a conformal log
Sobolev inequality},
Adv. Math. \textbf{375} (2020), 107395.

\bibitem{Gross1975}
L. Gross,
\textit{Logarithmic Sobolev inequalities},
Amer. J. Math. \textbf{97} (1975), 1061-1083.



\bibitem{Hersch1970}
J. Hersch,
\textit{Quatre propri\'et\'es isop\'erim\'etriques de membranes sph\'eriques
homog\`enes},
C. R. Acad. Sci. Paris S\'er. A \textbf{270} (1970), A1645-A1648.


\bibitem{JinLiXiong2014}
T. Jin, Y. Y. Li and J. Xiong,
\textit{On a fractional Nirenberg problem, Part I: blow up analysis and compactness of solutions},
J. Eur. Math. Soc. \textbf{16} (2014), 1111-1171.

\bibitem{Konig2023}
T. K\"onig,
\textit{On the sharp constant in the Bianchi-Egnell stability inequality},
Bull. Lond. Math. Soc. \textbf{55} (2023), 2070-2075.

\bibitem{Konig2025}
T. K\"onig,
\textit{Stability for the Sobolev inequality: existence of a minimizer},
J. Eur. Math. Soc. (online first, 2025),
\href{https://doi.org/10.4171/JEMS/1582}{doi:10.4171/JEMS/1582}.

\bibitem{Laugesen2021}
R. S. Laugesen,
\textit{Well-posedness of Hersch-Szeg\H{o}'s center of mass
by hyperbolic energy minimization},
Ann. Math. Qu\'ebec \textbf{45}  (2021), 363-390.
\bibitem{Lieb} E. H. Lieb, {\it Sharp constants in the Hardy-Littlewood-Sobolev and related inequalities}, Ann. Math., \textbf{118}
(1983), 349-374.
\bibitem{LuWei1999}
G. Lu and J. Wei,
\textit{On a Sobolev inequality with remainder terms},
Proc. Amer. Math. Soc. \textbf{128} (2000), 75-84.

\end{thebibliography}
\end{document}